\setlist[enumerate]{label=(\alph*)} 
\newcommand{\Hom}{\operatorname{Hom}}
\newcommand{\inv}{^{-1}}
\newcommand{\Ob}{\operatorname{Ob}}
\newcommand{\Id}{\operatorname{Id}}
\newcommand{\stab}{\operatorname{stab}}
\newcommand{\Spc}{\operatorname{Spc}}
\newcommand{\pt}{\operatorname{pt}}
\newcommand\isoto{\stackrel{\sim}{\smash{\longrightarrow}\rule{-1pt}{0.4ex}}}
\newcommand{\on}[1]{\operatorname{#1}}
\newcommand{\calD}{\mathcal{D}}
\newcommand{\catF}{\mathscr{F}}
\newcommand{\catG}{\mathscr{G}}
\newcommand{\catI}{\mathscr{I}}
\newcommand{\catJ}{\mathscr{J}}
\newcommand{\catK}{\mathscr{K}}
\newcommand{\catL}{\mathscr{L}}
\newcommand{\catM}{\mathscr{M}}
\newcommand{\catN}{\mathscr{N}}
\newcommand{\catP}{\mathscr{P}}
\newcommand{\catT}{\mathscr{T}}
\newcommand{\fraki}{\mathfrak{i}}
\newcommand{\frakp}{\mathfrak{p}}
\newcommand{\bbF}{\mathbb{F}}
\newcommand{\bbG}{\mathbb{G}}
\newcommand{\bbN}{\mathbb{N}}
\newcommand{\bbR}{\mathbb{R}}
\newcommand{\bbone}{\mathbbm{1}}
\newcommand{\supp}{\on{supp}}
\newcommand{\sbull}{{\scriptscriptstyle\bullet}}
\newcommand{\Ext}{\on{Ext}}
\newcommand{\Spec}{\on{Spec}}
\newcommand{\cp}{^{cp}}
\newcommand{\Proj}{\on{Proj}}
\newcommand{\down}{\downarrow\hspace{-0.28em}}
\newtheorem{theorem}{Theorem}[section]
\newtheorem{lemma}[theorem]{Lemma}
\newtheorem{proposition}[theorem]{Proposition}
\newtheorem{corollary}[theorem]{Corollary}
\newtheorem*{theorem*}{Theorem}
\newtheorem*{question}{Question}
\newenvironment{customthm}[1]
  {\innercustomthm}
  {\endinnercustomthm}
\theoremstyle{remark}
\newtheorem{remark}[theorem]{Remark}
\newtheorem{example}[theorem]{Example}
\newtheorem{observation}[theorem]{Observation}
\theoremstyle{definition}
\newtheorem{definition}[theorem]{Definition}
\newtheorem{notation}[theorem]{Notation}
\newtheorem*{definition*}{Definition}
\newcommand{\xhookdoubleheadrightarrow}[2][]{%
  \lhook\joinrel
  \ext@arrow 0359\rightarrowfill@ {#1}{#2}%
  \mathrel{\mspace{-15mu}}\rightarrow
}
\begin{document}
    \title[Re-framing nctt-geometry]{Re-framing the classification of ideals in noncommutative tensor-triangular geometry}
    \subjclass[2020]{18G80, 06D22, 18F70, 18M05} 
    \keywords{Noncommutative tensor-triangular geometry, monoidal category, semiprime ideal, spatial frame, Stone duality, support} 
    \date{\today}
    \author[T.~De Deyn]{Timothy De Deyn}
    \address{T.~De Deyn,
    School of Mathematics and Statistics,
    University of Glasgow, 
    Glasgow G12 8QQ,
    United Kingdom}
    \email{timothy.dedeyn@glasgow.ac.uk}
    
    \author[S.~K.~Miller]{Sam K. Miller}
    \address{S.~K.~Miller,
    Department of Mathematics, University of Georgia, Athens GA 30602, United States of America} 
    \email{sam.miller@uga.edu} 

    \begin{abstract}
        We prove that, given the Balmer spectrum of any essentially small monoidal-triangulated category, one has a classification of semiprime thick $\otimes$-ideals arising in terms of a ``pseudo-Hochster-dual'' of the noncommutative Balmer spectrum. This extends Balmer's classification of radical thick $\otimes$-ideals to noncommutative tensor-triangular geometry. 
        To achieve this, we utilize the notion of support data for lattices and frames, under which the classification follows via Stone duality. 
        We also give a characterization for when the noncommutative Balmer spectrum behaves as it does in tensor-triangular geometry, that is, when it is a spectral space with quasi-compact opens given by complements of supports. 
        Finally, we show that rigid centrally generated monoidal-triangulated categories satisfy this property, and we answer a question posed by Negron--Pevtsova regarding classification of one-sided $\otimes$-ideals via cohomological support.

    \end{abstract}

    \maketitle
    \section*{Introduction}

    Given a mountain, a mountaineer's burning desire is to summit it, and given a tensor-triangulated category, a tensor-triangular geometer's burning desire is to classify its thick $\otimes$-ideals. Such classification questions have presented themselves for a few decades now in widely varying fields, in differing languages and with differing motivations. Classical classification results in stable homotopy theory \cite{DHS88, HS98}, algebraic geometry \cite{Hop87,Nee96,Tho97}, and modular representation theory \cite{BCR97, FP07} were united under Balmer's theory of tensor-triangular geometry \cite{Bal05}. In this setting, one associates to an essentially small tensor-triangulated category $\catK$ a spectral topological space, now christened the \textit{Balmer spectrum} $\Spc(\catK)$, equipped with, for each object $x \in \catK$, its \textit{support} $\supp(x)$, a closed subset with quasi-compact complement. Together, this data determines the \textit{universal support theory} of $\catK$. 
    
    One of Balmer's key insights \cite[Theorem 4.10]{Bal05} fundamentally connects $\Spc(\catK)$ to the classification of thick $\otimes$-ideals: one has an order-preserving bijection between the \textit{radical} (an automatic assumption if for instance $\catK$ is \textit{rigid}) thick $\otimes$-ideals of $\catK$ and the so-called \textit{Thomason subsets} of $\Spc(\catK)$, that is, unions of closed subsets with quasi-compact complement. Moreover, such a so-called \textit{classifying support} is unique - if a topological space with support similarly classifies the radical thick $\otimes$-ideals of $\catK$, it must in fact be $\Spc(\catK)$ \cite{Bal05, BKS07}.
    
    As the world is fundamentally noncommutative, humanity need not limit itself to a symmetric tensor product. Nakano--Vashaw--Yakimov \cite{NVY22}, building on Buan--Krause--Solberg's approach \cite{BKS07}, formalized the theory of \textit{noncommutative} tensor-triangular geometry, the study of \textit{monoidal}-triangulated categories. 
    Here, one has a noncommutative Balmer spectrum $\Spc(\catK)$, as well as a \textit{complete prime spectrum} $\Spc\cp(\catK)$, again with associated supports on objects. Again, $\Spc(\catK)$ is the universal \textit{noncommutative} support datum, and Miller showed analogously that $\Spc\cp(\catK)$ is the universal \textit{multiplicative} support datum \cite{Mil25}. But life without a commutative tensor product can be vexing, and the question of classification of thick $\otimes$-ideals was left unresolved. The first answer came from Nakano--Vashaw--Yakimov in their pioneering paper. The authors determined that if a space classifies thick $\otimes$-ideals under certain hypotheses (including rigidity, extension to a ``big'' category, and Noetherianity of the classifying space), then indeed the space must be $\Spc(\catK)$. Other examples of when $\Spc(\catK)$ classifies \textit{semiprime} thick $\otimes$-ideals (the noncommutative analogue of radical thick $\otimes$-ideals, which again becomes automatic if $\catK$ is rigid) subsequently followed:
    \begin{enumerate}
        \item when $\catK$ has a thick generator; \cite{NVY23}
        \item when $\Spc(\catK) = \Spc\cp(\catK)$; \cite{MR23}
        \item when $\Spc(\catK)$ is Noetherian. \cite{Row24}
    \end{enumerate}
    
    However, all hope for a classification theorem mirroring the commutative case was lost when Huang--Vashaw dropped a bombshell \cite{HV25}: an example of a monoidal-triangulated category $\catK$ whose (semiprime) thick $\otimes$-ideals are parametrized by the \textit{specialization-closed} subsets of $\Spc(\catK)$, which in this example is a set strictly larger than the collection of Thomason subsets of $\Spc(\catK)$. Suddenly, the picture of how classification should look, or even if it was always possible, was rather murky. 

    Back in time, a lattice-theoretic point of view was brewing, as first suggested by Buan--Krause--Solberg \cite{BKS07} and later developed in an algebro-geometric context by Koch--Pitsch \cite{KP17}. The key insight is that in the tensor-triangular settings, the Hochster dual $\Spc(\catK)^\vee$ of the Balmer spectrum is the spectral space corresponding to the the \emph{coherent frame} of radical thick $\otimes$-ideals via Stone duality, an antiequivalence between certain topological spaces and frames \cite{Sto39}. Classification of radical thick $\otimes$-ideals is therefore a consequence of the duality. Gratz--Stevenson \cite{GS23} and Krause \cite{K23, Kra24} extended the lattice theory point of view further, demonstrating its potential use in wider contexts, such as tensor-\textit{exact} geometry and noncommutative tensor-triangular geometry (although \cite{BKS07} already had sown the noncommutative seeds). This perspective has been adopted in practical contexts recently, see e.g., \cite{Aok23, BGLS26}. 
    
    In this paper, we further harness Stone duality to settle the following looming classification questions of noncommutative tensor-triangular geometry:

    \begin{question}\hfill
        \begin{enumerate}
            \item Can we classify the semiprime thick $\otimes$-ideals of an essentially small monoidal-triangulated category $\catK$, given its universal support $(\Spc(\catK), \supp)$? 
            \item When does classification behave as in the tensor-triangular setting? That is, $\Spc(\catK)$ is a spectral topological space with quasi-compact open sets given by the complements of supports, and we have an order-preserving bijection between the semiprime thick $\otimes$-ideals of $\catK$ and the Thomason subsets of $\Spc(\catK)$?
        \end{enumerate}
    \end{question}

    The answer to the first question is, in short, \textbf{yes}! We denote by $T_s(\catK)$ the lattice of semiprime thick $\otimes$-ideals of $\catK$, and given a topological space $X$, we denote the lattice of open subsets of $X$ by $\Omega(X)$. We view \cite[Theorem 6.4.5]{GS23} as a precursor and inspiration to this theorem, as it observes that the noncommutative Balmer spectrum is recovered from $T_s(\catK)$ under the hypothesis that all ideals are semiprime.

    \begin{customthm}{A}[\Cref{cor:classification_for_spatial_frames}]\label{thm:thmA}
        Let $\catK$ be an essentially small monoidal-triangulated category. We have a natural isomorphism of lattices \[T_s(\catK) \cong \Omega(\Spc(\catK)^\nu),\] where $\Spc(\catK)^\nu$ denotes the ``pseudo-Hochster dual'' of $(\Spc(\catK),\supp)$. That is, $\Spc(\catK)^\nu = \Spc(\catK)$ as a set with supports $\supp(x)$, $x\in \catK$, forming an \textbf{open} base for the topology. 
    \end{customthm}

    We note that this ``pseudo-Hochster dual'' is not necessarily the true Hochster dual, since $\Spc(\catK)$ and $\Spc(\catK)^\nu$ need not be spectral, but one can still obtain one topology from the other, so long as the support data is known. There are two key observations which lead to this result. First, even if the lattice of semiprime thick $\otimes$-ideals of $\catK$ is not necessarily a coherent frame, it is still \textit{spatial}, and therefore, by definition, identifies with the lattice of open sets of its space of points. Second, Gratz--Stevenson \cite{GS23} define the notion of a ``prime with respect to semiprimes''; equivalently, a meet-prime element of $T_s(\catK)$. We show that semiprimes detect primality, that is, such a meet-prime is equivalently a prime thick $\otimes$-ideal of $\catK$; this is \Cref{prop:prime=prime_wrt_semiprime}. 
    
    Perhaps the moral of the story here is that when it comes to classifying ideals, Stone duality suggests support should be open, in which case classification is a natural byproduct. There are two pseudo-dual topologies (Hochster dual if things play nice) on the Balmer spectrum (and sometimes more, for example, the patch topology, see \cite{BG25}).
    Of course, when the Balmer spectrum is spectral, they are easily identified via Hochster duality, but that may not always be the case! 

    \Cref{thm:thmA} answers the first question, so far so good.
    For the second, Huang--Vashaw's example \cite[Example 7.7]{HV25} shows that the answer to the second question cannot be ``always''.
    Indeed, the frame of semiprime thick $\otimes$-ideals is spatial, but not always coherent. 
    Recall that a principal semiprime thick $\otimes$-ideal is one generated by a single element; so of the form $\langle x\rangle_s$, where the notation stands for the smallest semiprime thick $\otimes$-ideal containing $x\in\catK$.
    In the commutative case these always form a compact generating sublattice of the lattice of semiprime (i.e.,\ radical) thick $\otimes$-ideals.
    In the noncommutative case, however, these may not form a sublattice of $T_s(\catK)$ or may not consist of compact elements.
    We denote the sub-poset of principal semiprime thick $\otimes$-ideals by $t_s(\catK)$, and call it the \textit{principal part} of $T_s(\catK)$. 
    Re-framing the second question lattice-theoretically, the equivalent question becomes: 
    
    \begin{question}
        When is $T_s(\catK)$ a coherent frame, with its sublattice of compact elements precisely $t_s(\catK)$, the sub-poset of the principally generated semiprime thick $\otimes$-ideals? 
    \end{question}

    The following definitions are necessary to answer the latter question. 

    \begin{definition*} Let $\catK$ be an essentially small monoidal-triangulated category.
        \begin{enumerate}
            \item We say $\catK$ is \textit{principally closed} if for all $x, y \in\catK$, we have $\langle x\rangle_s \cap \langle y\rangle_s = \langle z\rangle_s$ for some $z \in \catK$.
            \item We say $\catK$ has \textit{compact detection} if for all $x \in \catK$, there exists a $s_x \in \catK$ such that the following holds: for all semiprime thick $\otimes$-ideals $\catI \in T_s(\catK)$, $x \in \catI$ if and only if $x \otimes s_x \otimes x \in \catI$. 
        \end{enumerate}
    \end{definition*}

    Compact detection is a mild assumption; it holds for instance if $\catK$ is half-rigid. More generally, it holds when all ideals are semiprime, see \Cref{cor:semiprimeimpliescompactdetection}. Moreover, it holds if all semiprime thick $\otimes$-ideals are radical. Principal closure is more restrictive, but still holds for all the examples where Balmer classification is known, and additionally, (weakly) \textit{centrally generated} categories; a fact shown in  \Cref{sec:centralgeneration}. The condition of central generation has come to the foregrounds of noncommutative tensor-triangular geometry \cite{NP23, NVY25}, and Negron-Pevtsova showed the condition occurs for stable module categories of certain (non-co-commutative) Hopf algebras. 
    
    We have that $\catK$ is principally closed if and only if $t_s(\catK)$ is a sublattice of $T_s(\catK)$, and is precisely the poset of compact elements if and only if $\catK$ has compact detection. Consequently, we obtain the following theorem, thereby answering the second question.

    \begin{customthm}{B}[\Cref{thm:distlattice}, \Cref{cor:ncspcspectralspace}, and \Cref{cor:balmerclassification}]\label{thm:thmB}
        Let $\catK$ be a monoidal-triangulated category. The following are equivalent:
        \begin{enumerate}
            \item $\catK$ has principal closure and compact detection;
            \item The poset $t_s(\catK)$ is a bounded distributive lattice consisting of the compact elements of $T_s(\catK)$;
            \item The noncommutative Balmer spectrum $\Spc(\catK)$ is a spectral space with quasi-compact opens given by complements of supports,  \[K^\circ(\Spc(\catK)) = \{\supp(x)^c \mid x \in \catK\};\]
            
        \end{enumerate}
        If any of these conditions hold, we have an order-preserving bijection with the Thomason subsets 
        \begin{align*}
            T_s(\catK) &\isoto \on{Th}(\Spc(\catK)), \\
            \catI\quad &\longmapsto  \bigcup_{x \in \catI} \supp(x).
        \end{align*}
    \end{customthm}

    In particular, all previously known examples where Balmer's classification holds noncommutatively satisfy principal closure and compact detection. We note that it is, in principle, still possible for $\Spc(\catK)$ to be spectral without the two conditions holding.
    However, for this to occur, compact detection must fail; in particular, $\catK$ cannot be rigid, so an example of this phenomena would be rather exotic.
    
    With these classification results, we extend recent results of Barthel \cite{Bar25}, which themselves generalize a classical result of Cohen \cite{Coh50} to the lattice-theoretic and monoidal-triangulated settings, see \Cref{sec:Cohen} and \Cref{cor:Barthel_translated}. We also obtain a more general version of \cite[Theorem 6.2.1]{NVY22} now detecting non-Noetherian Balmer spectra, also an analogue of \cite[Theorem 3.1]{Del10}. See \Cref{def:weak_supp} for terminology.


    \begin{customthm}{C}[\Cref{thm:classifyingsupportdatumforclosedsupport}]\label{thm:thmD}
        Let $\catK$ be an essentially small monoidal-triangulated category which is principally closed and has compact detection, and let $(X, \sigma)$ be a weak support datum (in the sense of \cite{NVY22}) for $\catK$. 
        \begin{enumerate}
            \item If $(X,\sigma)$ is injective and realizing, then $(X,\sigma)$ is a classifying support datum, hence $(X,\sigma) \cong (\Spc(\catK),\supp)$. 
            \item Assume $\catK = \catT^c$ for a rigidly-compactly-generated monoidal-triangulated category $\catT$ and that $(X,\sigma)$ is an extended weak support datum for $\catT$. If $(X,\sigma)$ is \textit{faithful} and realizing, then $(X,\sigma) \cong (\Spc(\catK), \supp)$.
        \end{enumerate}
        In either case, if $(X,\sigma)$ is Noetherian-realizing, then $\Spc(\catK)$ is Noetherian.

    \end{customthm}
    Finally, we work out examples in \Cref{sec:ex}, considering centrally generated categories and well-behaved crossed product categories, and answer a question posed by Negron--Pevtsova \cite[Question 11.1]{NP23} regarding the classification of \textit{one-sided} thick $\otimes$-ideals for stable module categories of coordinate algebras of finite group schemes via cohomological support.
    
    To establish our results, our main mantra (for which we claim no originality) is that support theory and Balmer's classification result in tensor-triangular geometry can be re-framed in terms of Stone duality. 
    Therefore, understanding when these hold in the noncommutative setting really necessitates us to understand the lattice theory underpinning $\otimes$-ideals.
    Our observation that $T_s(\catK)$ is, in fact, a spatial frame is what allows us to exploit the full machinery of Stone duality.
    In fact, Krause laid out much of the groundwork for support data for coherent frames in \cite{Kra24}, we expand this framework to arbitrary frames in \Cref{sec:support}.
    For a frame, its universal support is its space of points, which in turn is classifying if and only if the frame is spatial (see \Cref{prop:spatial_support}).
    For a coherent frame (such as the lattice of radical thick $\otimes$-ideals of a tensor-triangulated category), one can rephrase the support theories in terms of support of its sublattice of compact elements, recovering the framework laid out in \cite{Kra24}. 
    To finish, we give a table translating the various lattice- and support-theoretic properties.

    \begin{table}[H]
        \begin{tabular}{|c|c|}
            \hline
            Lattice properties & Geometric/categorical properties \\
            \hline \hline
            $T_s(\catK)$ is coherent & $\Spc(\catK)^\vee$ and $\Spc(\catK)$ are spectral \\
            \hline
            $T_s(\catK)$ is spatial & $\Spc(\catK)^\nu$ is sober \\
            \hline 
            $\catI \in T_s(\catK)$ is compact & $\supp(\catI) \subseteq \Spc(\catK)^\nu$ is quasi-compact \\ 
            \hline
            $t_s(\catK) = T_s(\catK)^c$ & $\catK$ has compact detection \\
            \hline
            $t_s(\catK)$ is a sublattice of $T_s(\catK)$ & $\catK$ has principal closure\\
            \hline
            All elements of $T_s(\catK)$ are compact &  \begin{tabular}{@{}c@{}} All ideals of $\catK$ are principal \\ $\Spc(\catK)^\vee$ is Noetherian \end{tabular} \\
            \hline
        \end{tabular}
        \centering
        \caption{A translation of lattice-theoretic and support-theoretic properties.}
        \label{tab:placeholder}
    \end{table}

    \subsection*{Organization} 
    {\Crefformat{section}{#2Section~#1#3}
    \renewcommand{\crefrangeconjunction}{ through~}
    This paper is divided into two parts. The first half of the paper, \Cref{sec:lat_prelim,sec:support,sec:Cohen}, concern abstract lattice theory. 
    In \Cref{sec:lat_prelim}, we begin with some lattice-theoretic preliminaries, introducing frames and Stone duality. \Cref{sec:support} concerns the notions of support data for lattices and frames as well as classifying support. Much of \Cref{sec:support} regards direct analogues of familiar tensor-triangular facts in the lattice context. \Cref{sec:Cohen} briefly restates Barthel's adaptation of Cohen's theorem for lattices. 

    The second half of the paper applies the first half towards noncommutative tensor-triangular geometry. \Cref{sec:nctt} reviews some tensor-triangular preliminaries and proves that the lattice of semiprime thick $\otimes$-ideals of a monoidal-triangulated category is a spatial frame, hence proving \Cref{thm:thmA}. \Cref{sec:coh_and_pp} considers principal closure, and compact detection. \Cref{sec:control_pp} deduces the main results including \Cref{thm:thmB} and \Cref{thm:thmD}. Finally, \Cref{sec:ex} considers examples of principal closure and compact detection: we show that centrally generated categories with compact detection satisfy principal closure, and that, under certain hypotheses, crossed product categories can inherit compact detection and principal closure. We end by discussing Negron--Pevtsova's question and comment on classification of one-sided thick $\otimes$-ideals.   
    }

    \subsection*{Acknowledgments} The authors thank Henning Krause, Dan Nakano, Greg Stevenson, and Kent Vashaw for numerous helpful conversations and comments. 
    De Deyn was supported by ERC Consolidator Grant 101001227 (MMiMMa). Miller was partially supported by an AMS-Simons travel grant. 
    
    \section{Lattice preliminaries}\label{sec:lat_prelim}
    
    We start by recalling some lattice theoretic concepts. Our aim is to translate some standard tensor-triangular notions to the context of bounded distributive lattices. This is not a new idea, see also \cite{KP17,GS23, Kra24}.
    For a comprehensive review of general lattice theory, we refer the reader to \cite{Joh82}; for spectral spaces, to \cite{DST19}.

    \subsection{Bounded distributive lattices and coherent frames}
 
    Recall a \emph{lattice} $\catL$ is a partially ordered set (poset) such that every non-empty subset of elements has least upper bound and greatest lower bound, i.e., it has non-empty finite joins and meets, denoted  $\vee$ and $\wedge$ respectively.
    The empty join and meet, if they exist, are called the \emph{bottom} and \emph{top} and are denoted $0$ and $1$; a lattice with a bottom and top is called \emph{bounded}. 
    If joins distribute over meets, equivalently meets distribute over joins, the lattice is called \emph{distributive}.
    
    When $\catL$ has arbitrary joins it is called \emph{complete} (this is equivalent to having arbitrary meets).
    Moreover, if additionally arbitrary joins distribute over finite meets the lattice is called a \emph{frame}.  
    An element $x\in \catL$ is called \emph{compact} (also called \emph{finite}, \emph{finitely presented} or \emph{small}) if for all subsets $S \subseteq \catL$ with $x \leq \vee S$, there exists a finite subset $S' \subseteq S$ such that $x \leq \vee S'$. 
     \begin{definition}
        A frame $\catF$ is called \emph{coherent} if the compact elements form a bounded (necessarily distributive) sublattice and generate $\catF$ under joins, i.e., any element of $\catF$ is join of compact elements. 
     \end{definition}
     \begin{remark}
         When the compact elements generate under join, but they do not necessarily form a sublattice, the frame is called \emph{algebraic}.
     \end{remark}
    
    To any lattice one can associate another lattice, that of its ideals, which is a coherent frame when the starting lattice is bounded and distributive. 
    An \textit{ideal} $\catI \subseteq \catL$ is a non-empty downward set that is closed under finite joins. That is, if $x \in \catI$ and $y \leq x$, then $y \in \catI$, and if $x', y'\in \catI$, then $x' \wedge y' \in \catI$. The set of ideals is denoted $\Id(\catL)$; it is a bounded, complete lattice.
    An ideal $\catI$ of $\Spc(\catL)$ is \textit{principal} if it is generated by a single element $x \in \catL$. That is, $\catI=\{ y \in \catL \mid y \leq x\}$. We denote the ideal generated by $x$ by $\down x$. The principal ideals are precisely the compact elements of the ideal lattice $\Id(\catL)$ of $\catL$ and they generate under joins, demonstrating that $\Id(\catL)$ is coherent. 
    In fact, any coherent frame is necessarily of this form.

    \begin{proposition}[{see e.g.,\ \cite[Corollary II.3.3]{Joh82}}] \label{prop:dist_lattice=coherent_frame}
        There is an equivalence of categories 
        \[
            \begin{tikzcd}
                {\mathsf{BDLat}} & {\mathsf{CohFrm}}\rlap{ ,}
                \arrow["{\Id}", shift left=1, from=1-1, to=1-2]
                \arrow["(-)^c", shift left=1, from=1-2, to=1-1]
            \end{tikzcd}
        \]
        between the category of bounded distributive lattices and the category of coherent frames.
        The functor $\catL\mapsto\Id(\catL)$ maps a bounded distributive lattice to its lattice of ideals, whilst $\catF\mapsto\catF^c$ maps a coherent frame to its sublattice of compact elements.
    \end{proposition}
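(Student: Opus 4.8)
The plan is to exhibit the two functors explicitly, construct the unit and counit natural isomorphisms, and verify everything is well-defined and structure-preserving; this is the classical argument (cf.\ \cite[\S II.3]{Joh82}), so I will only indicate the moving parts and where distributivity is genuinely used. Throughout, morphisms in $\mathsf{BDLat}$ are bounded lattice homomorphisms and morphisms in $\mathsf{CohFrm}$ are \emph{coherent maps}, i.e.\ frame homomorphisms that additionally send compact elements to compact elements.

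First I would check that $\Id$ lands in $\mathsf{CohFrm}$ and is functorial. For $\catL\in\mathsf{BDLat}$, arbitrary joins in $\Id(\catL)$ are computed by generation: $\bigvee_i\catI_i$ is the set of $y$ with $y\le x_1\vee\cdots\vee x_n$ for finitely many $x_j\in\bigcup_i\catI_i$. Granting this, the frame law $\catI\wedge\bigvee_j\catJ_j=\bigvee_j(\catI\wedge\catJ_j)$ is exactly where distributivity of $\catL$ enters: if $y\in\catI$ and $y\le y_1\vee\cdots\vee y_n$ with each $y_k$ in some $\catJ_{j_k}$, distributivity rewrites $y=(y\wedge y_1)\vee\cdots\vee(y\wedge y_n)$ with $y\wedge y_k\in\catI\cap\catJ_{j_k}$. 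The principal ideals form a bounded sublattice ($\down x\vee\down y=\down(x\vee y)$, $\down x\wedge\down y=\down(x\wedge y)$, $\down 0=0$, $\down 1=1$), they are precisely the compact elements of $\Id(\catL)$, and every ideal is the join $\bigvee_{x\in\catI}\down x$ of the compacts below it; hence $\Id(\catL)$ is a coherent frame. On a morphism $f\colon\catL\to\catL'$ I would set $\Id(f)(\catI):=$ the ideal generated by $f(\catI)$: this preserves arbitrary joins by construction, sends $\down x$ to $\down f(x)$ (so preserves compactness), and preserves finite meets by distributivity of $\catL'$, hence is a coherent map.

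Dually, $(-)^c$ is well-defined: in a coherent frame the compacts form a bounded sublattice by definition, and it is distributive because any frame is (and sublattices inherit distributivity); a coherent map restricts on compacts to a bounded lattice homomorphism. I would then write down the candidate natural transformations $\eta_\catL\colon\catL\isoto\Id(\catL)^c$, $x\mapsto\down x$, and $\varepsilon_\catF\colon\Id(\catF^c)\isoto\catF$, $\catI\mapsto\bigvee_\catF\catI$, with candidate inverses $\down x\mapsto x$ and $a\mapsto\{c\in\catF^c:c\le a\}$ respectively. That $\eta_\catL$ is a bounded-lattice isomorphism is immediate from the previous paragraph together with $\down x=\down y\Rightarrow x=y$ and the fact that every compact element of $\Id(\catL)$ is principal.

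The one place where something genuinely happens is showing $\varepsilon_\catF$ is a bijective frame morphism, and I expect preservation of finite meets to require the most care. Surjectivity of $\varepsilon_\catF$ and the identity $\bigvee\{c\in\catF^c:c\le a\}=a$ both use that $\catF$ is generated under joins by its compacts. Injectivity, equivalently that $\catI\mapsto\{c\in\catF^c:c\le\bigvee\catI\}$ recovers $\catI$, is where compactness is used: if $c\in\catF^c$ and $c\le\bigvee\catI$ then $c\le x_1\vee\cdots\vee x_n$ with $x_j\in\catI$, and since $\catI$ is closed under finite joins, $c\in\catI$. Preservation of arbitrary joins is formal. For finite meets one invokes the frame distributive law in $\catF$ to get $(\bigvee\catI)\wedge(\bigvee\catJ)=\bigvee_{a\in\catI,\,b\in\catJ}a\wedge b$, then notes $a\wedge b\in\catF^c$ (compacts are closed under meets) and $a\wedge b\in\catI\cap\catJ$, so this equals $\bigvee(\catI\wedge\catJ)$, the reverse inequality being trivial. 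Naturality of $\eta$ and $\varepsilon$ is a routine diagram chase against the definition of $\Id(f)$ and of restriction to compacts, and assembling all of this yields the stated equivalence.
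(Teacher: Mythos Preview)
Your proof is correct and follows the standard argument from \cite[\S II.3]{Joh82}. Note, however, that the paper does not actually prove this proposition: it is stated with a citation to Johnstone and used as a black box, so there is no ``paper's own proof'' to compare against. Your write-up is a faithful unpacking of that reference; the only minor quibble is that the verification that $\Id(f)$ preserves finite meets does not really require distributivity of $\catL'$ (it follows directly from $f$ preserving binary meets and the description of $\Id(f)(\catI)$ as the downward closure of $f(\catI)$), but this does not affect correctness.
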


    \subsection{Stone duality}
    Stone duality gives an antiequivalence between spatial lattices and sober topological spaces.
    Let us recall these.
    A topological space is \emph{sober} if every nonempty irreducible closed subset has a unique generic point. 
    A point of a frame $\catF$ is a frame morphism, i.e., an order preserving map preserving arbitrary joins and finite meets, $\catF\to \mathbf{2}$, where $\mathbf{2}$ denotes the two-element bounded lattice $\{0,1\}$.
    \begin{definition}
        A frame $\catF$ is called \emph{spatial} when it has enough points in the sense that whenever $x\not\leq y$ in $\catF$ there exists a point $p\colon \catF\to \mathbf{2}$ with $p(x) = 1$ and $p(y) = 0$.
    \end{definition}

    In \Cref{prop:frameptsaremeetprimes,prop:equivalent_def_of_spatial}, we will observe that points and spatiality translate into primality conditions.
    
    \begin{theorem}[{\cite[Stone duality]{Sto39}}]\label{thm:Stone}
    There is a duality
    \[
        \begin{tikzcd}
            {\mathsf{SFrm}^{\on{op}}} & {\mathsf{Sob}}\rlap{ ,}
            \arrow["{\pt}", shift left=1, from=1-1, to=1-2]
            \arrow["\Omega", shift left=1, from=1-2, to=1-1]
        \end{tikzcd}
    \]
    where $\Omega$ maps a space to its lattice of open subsets and $\pt$ maps a spatial frame $\catF$ to its set of points together with the open subsets given by $U_x := \{p \colon \catF \to \mathbf{2} \mid p(x) = 1\}$ for $x\in \catF$.
    \end{theorem}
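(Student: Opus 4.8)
The plan is to realize $\Omega$ and $\pt$ as an adjunction between $\mathsf{Top}$ and $\mathsf{Frm}^{\on{op}}$ and to show it restricts to the asserted equivalence on the full subcategories $\mathsf{Sob}$ and $\mathsf{SFrm}$, these being exactly the objects at which the unit and counit are isomorphisms. Everything here is classical (see \cite[Ch.~II]{Joh82}), but we indicate the shape of the argument.

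First I would check the functors are well-defined. For continuous $f\colon X\to Y$ the preimage $f^{-1}\colon\Omega(Y)\to\Omega(X)$ is a frame morphism, and $\Omega(X)$ is always spatial: given $U\not\subseteq V$ in $\Omega(X)$, choose $x\in U\setminus V$; the evaluation $p_x\colon\Omega(X)\to\mathbf 2$ defined by $p_x(W)=1\iff x\in W$ is a frame morphism separating $U$ from $V$. For a spatial frame $\catF$, I would verify that $x\mapsto U_x$ converts the frame structure into unions and finite intersections of subsets of $\pt(\catF)$ — since each point of $\catF$ is a frame morphism one gets $U_{x\wedge y}=U_x\cap U_y$, $U_{\bigvee_i x_i}=\bigcup_i U_{x_i}$, $U_0=\varnothing$, $U_1=\pt(\catF)$ — so the $U_x$ form a topology, and a frame morphism $g\colon\catF\to\catG$ induces a continuous map $\pt(g)\colon\pt(\catG)\to\pt(\catF)$, $p\mapsto p\circ g$. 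A short computation shows $\pt(\catF)$ is sober: using that a point $p$ kills a whole set $S\subseteq\catF$ iff it kills $\bigvee S$, the closed subsets of $\pt(\catF)$ are exactly the $V(a):=\{p\mid p(a)=0\}$ for $a\in\catF$; such a set is nonempty and irreducible precisely when $a$ is meet-prime with $a\neq 1$, and then the point $p_a$ with $p_a(x)=0\iff x\leq a$ is its unique generic point.

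Next I would write down the unit and counit and locate where they are isomorphisms. The unit $\eta_X\colon X\to\pt(\Omega(X))$ sends $x$ to the evaluation $p_x$ above; it is continuous and in fact a (topological) embedding, since $\eta_X^{-1}(U_W)=W$ and $\eta_X(W)=U_W\cap\eta_X(X)$, and it is injective exactly when $X$ is $T_0$. For surjectivity one identifies the points of $\Omega(X)$ with the nonempty irreducible closed subsets of $X$: a frame morphism $p\colon\Omega(X)\to\mathbf 2$ has $p^{-1}(0)$ a principal ideal $\{V\mid V\subseteq U\}$ for a unique open $U$, and the frame axioms for $p$ translate into $X\setminus U$ being nonempty and irreducible, so $\eta_X$ is surjective iff each such closed set has a generic point. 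Hence $\eta_X$ is a homeomorphism iff $X$ is sober (the definition of soberness bundles both $T_0$-ness and existence of generic points). The counit is $\varepsilon_\catF\colon\catF\to\Omega(\pt(\catF))$, $x\mapsto U_x$, a frame morphism by the previous paragraph, automatically surjective by the definition of the topology on $\pt(\catF)$; it is injective iff it is order-reflecting, i.e. $U_x\subseteq U_y\Rightarrow x\leq y$, which is precisely the condition that $\catF$ have enough points. Hence $\varepsilon_\catF$ is an isomorphism iff $\catF$ is spatial.

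Finally, the triangle identities for the pair $(\Omega,\pt)$ follow by a direct diagram chase from the explicit formulas for $\eta$ and $\varepsilon$; restricting the adjunction to $\mathsf{Sob}$ and $\mathsf{SFrm}$, where both natural transformations are isomorphisms, yields the stated duality. The only step needing genuine care is the surjectivity of $\eta_X$, namely the bijection between points of $\Omega(X)$ and nonempty irreducible closed subsets of $X$ — this is exactly where sobriety enters; the remainder is bookkeeping with the frame axioms.
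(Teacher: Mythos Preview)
Your proposal is correct and matches the paper's own treatment: the paper does not give a proof of this classical result but only a remark immediately after the statement, noting that there is an adjunction between $\mathsf{Frm}^{\on{op}}$ and $\mathsf{Top}$ and that the duality arises by restricting to the objects on which the unit and counit are isomorphisms. Your sketch is precisely a fleshing-out of that remark, carried out along the standard lines of \cite[Ch.~II]{Joh82}, so there is nothing to add.
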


    \begin{remark}\label{rem:pre_Stone}
        Really there is an adjunction  
        \[
        \begin{tikzcd}
        	{{\mathsf{Frm}^{\on{op}}}} \\ {{\mathsf{Top}}\rlap{ ,}}
            \arrow[""{name=0, anchor=center, inner sep=0}, "{\pt}" {yshift=-5pt}, bend left=45, from=1-1, to=2-1]
            \arrow[""{name=1, anchor=center, inner sep=0}, "{\Omega}" {yshift=-5pt}, bend left=45, from=2-1, to=1-1]
            \arrow["\dashv"{anchor=center}, draw=none, from=1, to=0]
        \end{tikzcd}
        \]
        between frames and topological spaces and the above duality comes from restricting those objects on which the counit and unit are isomorphisms.
    \end{remark}

    Recall that a topological space $X$ is called \emph{spectral} if it is quasi-compact, $T_0$, the quasi-compact opens $K^\circ(X)$ form a basis of open subsets and are closed under finite intersections. 
    By \cite[Theorem 6]{Hoc69}, every spectral space is homeomorphic to the the spectrum of a commutative ring. 
    By \cite[Theorem II.3.4]{Joh82} any coherent frame is spatial; in fact by Stone duality coherent frames correspond to spectral spaces, i.e.,\ \Cref{thm:Stone} restricts to the antiequivalence 
        \[
        \begin{tikzcd}
            {\mathsf{CohFrm}^{\on{op}}} & {\mathsf{Spectral}}\rlap{.}
            \arrow["{\pt}", shift left=1, from=1-1, to=1-2]
            \arrow["\Omega", shift left=1, from=1-2, to=1-1]
        \end{tikzcd}
    \]
    Combining \Cref{prop:dist_lattice=coherent_frame} and \Cref{thm:Stone} one therefore sees that distributive lattices correspond to spectral spaces as well, identifying with the sublattice of quasi-compact opens on the corresponding spectral space. 

    For the more algebraically inclined reader, let us make the link between points and primes. Recall an element $p \in \catL$ is \textit{meet-prime} if $x \wedge y \leq p$ implies $x \leq p$ or $y \leq p$.

    \begin{proposition} \label{prop:frameptsaremeetprimes}
        Let $\catF$ be a frame. Then the space of points $p\colon \catF \to \mathbf{2}$ corresponds to the set of meet-prime elements $\frakp \in \catF$ via the bijection \[\frakp \mapsto \left(p_{\frakp}\colon x\mapsto  \begin{cases}
            1 & x \not\leq \frakp\\
            0 & x \leq \frakp
        \end{cases}\right).\]
        Under this bijection, for all $x \in \catF$, the open set $U_x = \{p \mid p(x) = 1\}\in  \Omega(\pt(\catF))$ corresponds to the set of meet-primes $ \{\frakp \mid x \not\leq \frakp\}=:\supp(x) $. 
    \end{proposition}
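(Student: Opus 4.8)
The plan is to exhibit the inverse map explicitly and verify the two assignments are mutually inverse, the crux being that meet-primality of $\frakp$ is exactly what makes $p_\frakp$ preserve finite meets.

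First I would check that $p_\frakp$ is a frame morphism whenever $\frakp$ is a (proper, i.e.\ $\frakp\neq 1$) meet-prime; this properness is harmless, since any frame morphism to $\mathbf 2$ must send $1\mapsto 1$, so points are automatically nonconstant. Order-preservation is immediate: if $x\leq y$ and $x\not\leq\frakp$ then $y\not\leq\frakp$. Preservation of arbitrary joins is just the remark that $\bigvee_i x_i\leq\frakp$ iff $x_i\leq\frakp$ for all $i$, valid in any poset with joins; in particular $p_\frakp(0)=0$. Preservation of the empty meet is the properness $1\not\leq\frakp$, and preservation of binary meets is precisely meet-primality: $x\wedge y\not\leq\frakp$ iff $x\not\leq\frakp$ and $y\not\leq\frakp$.

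Conversely, given a frame morphism $p\colon\catF\to\mathbf 2$, I would set $\frakp_p:=\bigvee\{x\in\catF\mid p(x)=0\}$. Since $p$ preserves arbitrary joins, $p(\frakp_p)=0$; hence $p(x)=0$ iff $x\leq\frakp_p$ (one direction by definition of the join, the other since $p$ is order-preserving with $p(\frakp_p)=0$), i.e.\ $p=p_{\frakp_p}$. Moreover $\frakp_p\neq 1$ because $p(1)=1\neq 0=p(\frakp_p)$, and $\frakp_p$ is meet-prime: if $x\wedge y\leq\frakp_p$ then $p(x)\wedge p(y)=p(x\wedge y)=0$, so $p(x)=0$ or $p(y)=0$, i.e.\ $x\leq\frakp_p$ or $y\leq\frakp_p$. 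The two constructions are then visibly mutually inverse: from a meet-prime $\frakp$ one recovers $\bigvee\{x\mid x\leq\frakp\}=\frakp$, and from $p$ one recovers $p_{\frakp_p}=p$ by the previous sentence.

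For the final claim about supports there is nothing left to do but unwind definitions: under $\frakp\leftrightarrow p_\frakp$ we have $p_\frakp(x)=1$ iff $x\not\leq\frakp$, so $U_x=\{p\mid p(x)=1\}$ corresponds to $\{\frakp\mid x\not\leq\frakp\}=\supp(x)$. I expect no serious obstacle here; the only point needing care is the bookkeeping of the empty join and empty meet — equivalently, the reading of ``meet-prime'' as ``proper meet-prime'', which is forced by the fact that $\mathbf 2$-valued frame morphisms are nonconstant.
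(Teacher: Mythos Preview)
Your proof is correct and follows essentially the same approach as the paper: the paper also constructs the inverse by taking $a:=\bigvee p^{-1}(0)$ (equivalently your $\frakp_p$), observes $p^{-1}(0)=\down a$ so that $a$ is meet-prime and $p=p_a$, while declaring well-definedness and injectivity of $\frakp\mapsto p_\frakp$ ``straightforward.'' Your version is simply more explicit in verifying the frame-morphism axioms for $p_\frakp$ and in flagging the properness convention.
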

    \begin{proof}
        It is straightforward to verify the assignment is well-defined and injective. It remains to show every point is of this form. Let $p \in \Hom_{\mathsf{Frm}}(\catF, \mathbf{2})$ be a point. 
        As $p$ is a morphism of frames, $p\inv(0)$ is a prime ideal and taking $a:=\vee p\inv(0)$ it follows that $p\inv(0)=\down a$ and thus $a$ is meet-prime.  Furthermore, by construction \[p(x) = \begin{cases}
            1 & x \not\leq a\\ 0 & x\leq a
        \end{cases}\] therefore $a$ is our sought after meet-prime.
    \end{proof}

    \begin{remark}
        We note that the terminology ``support'' of an element in $\catF$ hearkens back to the classical algebro-geometric and topological notions of support. Indeed, given a frame $\catF$ and an element $\fraki\in \catF$, one can define a quotient frame $\catF/{\down\fraki}$ as follows.
        Define a congruence relation of $\catF$ by $a\equiv_\fraki b$ if and only if there exist $i,j\in\down\fraki$ with $a\vee i=b\vee j$. 
        Then the quotient frame $\catF/{\down\fraki}$ is the set of equivalence classes and it is readily verified it remains a frame (see e.g., \cite{DP66}).
        It follows that $\supp(x)$ is simply the collection of meet-primes $\frakp$ for which $x$ does not identify with $0$ in $\catF/{\down\frakp}$. 
        Topologically, $\supp(x) = U_x$ and $U_x$ is the collection of all points $p$ whose support contains $x$. 
    \end{remark}

    \begin{proposition}\label{prop:equivalent_def_of_spatial}
        A frame $\catF$ is spatial if and only if each element of $\catF$ can be expressed as a meet of meet-prime elements. 
    \end{proposition}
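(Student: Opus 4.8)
The plan is to reduce everything to the point–meet-prime dictionary of \Cref{prop:frameptsaremeetprimes}. That result identifies points $p\colon\catF\to\mathbf{2}$ with meet-prime elements $\frakp$, via $p_{\frakp}(x)=1\iff x\not\leq\frakp$; so the definition of spatiality translates into the following statement: for every pair $x\not\leq y$ in $\catF$ there exists a meet-prime $\frakp$ with $x\not\leq\frakp$ and $y\leq\frakp$. Once this reformulation is in place, both implications become short, essentially formal arguments; the only thing to watch is the convention for empty meets (the degenerate case $y=1$).

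For the implication ``every element is a meet of meet-primes $\Rightarrow$ $\catF$ is spatial'', I would take $x\not\leq y$ and write $y=\bigwedge_{i\in I}\frakp_i$ with each $\frakp_i$ meet-prime. If $x\leq\frakp_i$ held for all $i$, then we would get $x\leq\bigwedge_i\frakp_i=y$, a contradiction; hence some $\frakp_i$ satisfies $x\not\leq\frakp_i$, and it automatically satisfies $y\leq\frakp_i$ by definition of the meet. This $\frakp_i$ is precisely the meet-prime demanded by the reformulated condition, so $\catF$ is spatial.

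For the converse I would fix $y\in\catF$ and set $z:=\bigwedge\{\frakp\in\catF\mid\frakp\text{ meet-prime},\ y\leq\frakp\}$ (interpreting an empty meet as $1$). Clearly $y\leq z$ since $y$ is a lower bound of the indexing set. If $z\not\leq y$, spatiality hands us a meet-prime $\frakp$ with $z\not\leq\frakp$ and $y\leq\frakp$; but $y\leq\frakp$ puts $\frakp$ in the indexing set, so $z\leq\frakp$, contradicting $z\not\leq\frakp$. Hence $z\leq y$, giving $y=z$, a meet of meet-prime elements (and showing along the way that the indexing set is nonempty whenever $y\neq1$). There is no genuine obstacle here beyond correctly invoking \Cref{prop:frameptsaremeetprimes}; morally the content is just that Stone duality's ``enough points'' condition is the same as a ``radical-like'' presentation of every element as an intersection of primes.
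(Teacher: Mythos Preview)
Your proof is correct and follows essentially the same route as the paper's: both directions invoke the point--meet-prime dictionary of \Cref{prop:frameptsaremeetprimes}, and the converse is handled by the same contradiction on the meet of all meet-primes above a given element. Your write-up is just more explicit (in particular about the empty-meet convention), but there is no substantive difference in strategy.
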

    \begin{proof}
        If every element is a meet of primes, $x\not\leq y$ implies that there exists a prime $\frakp$ with $x\not\leq\frakp$ and $y\leq\frakp$; glancing at \Cref{prop:frameptsaremeetprimes} we see that the corresponding point separates $x$ and $y$.
        Conversely, if $\catF$ is spatial and suppose $y:=\wedge_{x\leq\frakp}\frakp\not\leq x$ then there is a prime $\frakp$ with $x\leq\frakp$ and $y\not\leq\frakp$ which is clearly contradicting the definition of $y$.
    \end{proof}

    \subsection{Hochster duality}
    In \cite[Proposition 8]{Hoc69} Hochster proved that spectral spaces have a duality theory, now referred to as Hochster duality.

    \begin{definition}
        Let $X$ be a spectral space. 
        The \emph{Hochster dual} of $X$, denoted by $X^\vee$, has the same underlying set as $X$ and has topology generated by taking the closed subsets of $X$ with quasi-compact complement as a basis of open subsets.
    \end{definition} 

    This is a duality, meaning $(X^\vee)^\vee=X$.
    The open subsets in $X^\vee$ are sometimes referred to as \emph{Thomason subsets} of $X$ (or alternatively \textit{inverse-open} subsets).
    By definition, a Thomason subset is a union of complements of quasi-compact opens of $X$. 

       \subsection{The spectrum of a lattice}\label{sec:Spc_BDLat}
         To finish this section we recall the following construction; it has appeared before in e.g.,\ \cite{BKS07,DST19,Kra24}.     
                    
        \begin{definition}
            Let $\catL$ be a bounded distributive lattice.
            The \textit{spectrum of $\catL$} is defined as the space of points of its ideal lattice, 
            \[
                \Spc(\catL):=\pt(\Id(\catL)).
            \]
        \end{definition}
        
        By definition, the spectrum of a lattice is a spectral space.
        In order to give a, perhaps, more familiar description of the spectrum let us recall prime ideals of a lattice $\catL$: a proper ideal $\catP \subset \catL$ is \textit{prime} if it is meet-prime in the ideal lattice $\Id(\catL)$.
        Equivalently, if $\catI \cap \catJ \in \catP$, for ideals $\catI,\catJ\subseteq \catL$, then $\catI \subseteq \catP$ or $\catJ \subseteq \catP$.             
        \begin{lemma}
            Let $\catL$ be a bounded distributive lattice.
            The spectrum $\Spc(\catL)$ identifies with the subset of $\Id(\catL)$ consisting of prime ideals. 
            The collection \[\supp(x) := \{\catP \in \Spc(\catL) \mid x\not\in \catP\},\quad x \in \catL,\] forms a basis of \textbf{open} sets for the topology on $\Spc(\catL)$. 
        \end{lemma}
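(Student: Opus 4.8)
The plan is to deduce everything by unwinding the definition $\Spc(\catL)=\pt(\Id(\catL))$ and applying \Cref{prop:frameptsaremeetprimes} and \Cref{thm:Stone} to the frame $\catF:=\Id(\catL)$, using coherence of $\Id(\catL)$ to cut the relevant basis down to principal ideals.

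For the underlying set: $\Id(\catL)$ is a coherent frame, in particular a frame, so \Cref{prop:frameptsaremeetprimes} applies and provides a bijection between the points of $\Id(\catL)$ and the meet-prime elements of the lattice $\Id(\catL)$ (necessarily proper, since the point attached to $\frakp$ preserves the top element precisely when $\frakp\neq\catL$). By the definition recalled just above the lemma, a proper ideal is meet-prime in $\Id(\catL)$ if and only if it is a prime ideal of $\catL$, so this set of meet-primes is exactly the set of prime ideals of $\catL$. This gives the claimed identification of $\Spc(\catL)$ with the set of prime ideals, which we use tacitly henceforth.

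For the topology: by \Cref{thm:Stone}, $\pt(\Id(\catL))$ carries the topology with basis $\{U_\catI\mid\catI\in\Id(\catL)\}$, where $U_\catI=\{p\mid p(\catI)=1\}$, and under the bijection of \Cref{prop:frameptsaremeetprimes} the set $U_\catI$ corresponds to $\{\catP\text{ prime}\mid\catI\not\subseteq\catP\}$. It remains to restrict to \emph{principal} ideals without shrinking the basis. Since $\Id(\catL)$ is coherent, every ideal satisfies $\catI=\bigvee_{x\in\catI}\down x$; and since a point $p\colon\Id(\catL)\to\mathbf{2}$ preserves arbitrary joins, $p(\catI)=1$ iff $p(\down x)=1$ for some $x\in\catI$, i.e.\ $U_\catI=\bigcup_{x\in\catI}U_{\down x}$. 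Hence the sets $U_{\down x}$, $x\in\catL$, already form a basis. Finally, for a prime ideal $\catP$ one has $\down x\subseteq\catP\iff x\in\catP$ (as $\catP$ is downward closed), so $U_{\down x}$ corresponds to $\{\catP\text{ prime}\mid x\notin\catP\}=\supp(x)$; thus the $\supp(x)$ form an open basis, as claimed.

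The only step carrying any content is the reduction from the $U_\catI$ to the $U_{\down x}$, which is exactly where coherence of $\Id(\catL)$ and the join-preservation property of frame morphisms enter; everything else is bookkeeping of the bijections already established in \Cref{prop:frameptsaremeetprimes} and \Cref{thm:Stone}.
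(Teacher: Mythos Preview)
Your proof is correct and follows essentially the same route as the paper, which simply writes ``Follows from \Cref{prop:frameptsaremeetprimes}.'' You have unpacked that one-liner by making explicit the reduction from the full family $\{U_\catI\mid\catI\in\Id(\catL)\}$ to the subfamily $\{U_{\down x}\mid x\in\catL\}$, which is indeed the only step not literally contained in \Cref{prop:frameptsaremeetprimes}; the paper leaves this to the reader. One minor remark: you do not really need coherence for the identity $\catI=\bigvee_{x\in\catI}\down x$---this holds in any ideal lattice since $\{\down x\mid x\in\catI\}$ is directed with union $\catI$---but invoking it is harmless.
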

        \begin{proof}
            Follows from \Cref{prop:frameptsaremeetprimes}.
        \end{proof}
    
        We note that from the point of view of Stone duality, it is more natural to specify the supports as open, rather than closed, as one who is familiar with the tensor-triangular setting may expect.
        As a warning: this is the opposite convention of the spectrum as defined in \cite{Kra24} (but is the same as in \cite{DST19}).
        We promise the reader this is not an arbitrary choice, and the discrepancy will be justified in the sequel.
        However, in this setup the choice of open versus closed is of minimal importance: one obtains the perhaps more familiar closed basis by applying Hochster duality for spectral spaces. 

        \begin{lemma}
            The Hochster dual $\Spc(\catL)^\vee$ of the spectrum, is as a set identical to $\Spc(\catL)$ but has the supports as a \textbf{closed} basis for the topology. 
        \end{lemma}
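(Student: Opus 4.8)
The plan is to unwind the definition of the Hochster dual and reduce the claim to the identification, already recorded after \Cref{thm:Stone}, of $\catL$ with the lattice of quasi-compact open subsets of $\Spc(\catL)$.

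First I would note that $\supp(x)$ is exactly the basic open $U_{\down x}$ attached to the principal ideal $\down x \in \Id(\catL)$ under \Cref{prop:frameptsaremeetprimes}. Since the principal ideals are precisely the compact elements of $\Id(\catL)$, and Stone duality (\Cref{thm:Stone}) restricted to coherent frames matches compact elements with quasi-compact opens, each $\supp(x)$ is a quasi-compact open subset of $\Spc(\catL)$; conversely, using that $\Id(\catL)$ is coherent (so every compact element is a finite join of principal ones) together with $\supp(x)\cup\supp(y)=\supp(x\vee y)$, every quasi-compact open of $\Spc(\catL)$ is of the form $\supp(x)$. In other words $K^\circ(\Spc(\catL))=\{\supp(x)\mid x\in\catL\}$, which is just a restatement of the fact that $x\mapsto\supp(x)$ identifies $\catL$ with $K^\circ(\Spc(\catL))$ (\Cref{prop:dist_lattice=coherent_frame} and \Cref{thm:Stone}).

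Then I would apply the definition of the Hochster dual directly: $\Spc(\catL)^\vee$ has the same underlying set as $\Spc(\catL)$ and carries the topology whose basic open sets are the complements of quasi-compact opens of $\Spc(\catL)$, i.e., the sets $\Spc(\catL)\setminus\supp(x)$ for $x\in\catL$. Passing to complements, $\{\supp(x)\mid x\in\catL\}$ is by construction a basis of \emph{closed} subsets for $\Spc(\catL)^\vee$, which is precisely the assertion. There is no genuine obstacle here: the entire content sits in the equality $K^\circ(\Spc(\catL))=\{\supp(x)\}$, and the only point needing a moment's care is verifying that the quasi-compact opens are not strictly finer than the $\supp(x)$ — which is exactly what coherence of $\Id(\catL)$ provides.
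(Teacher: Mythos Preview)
Your argument is correct and self-contained: you identify $K^\circ(\Spc(\catL))=\{\supp(x)\mid x\in\catL\}$ (which is exactly \Cref{prop:quasicompacts}, stated a few lines later in the paper) and then read off the closed basis of $\Spc(\catL)^\vee$ directly from the definition of the Hochster dual. The paper takes a different route: it invokes the identification $(\Spc(\catL)^\vee,\supp)=(\Spc(\catL^{\on{op}}),\supp^c)$ from \cite[Section 4]{Kra24}, i.e., it passes to the opposite lattice and uses that the spectrum of $\catL^{\on{op}}$ has the complements of supports as its open basis. Your approach is more elementary and does not rely on an external reference, at the cost of essentially reproving one half of \Cref{prop:quasicompacts} in passing; the paper's approach is terser and highlights the conceptual point that Hochster duality on the spectral side corresponds to passing to the opposite lattice, but outsources the verification. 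One minor stylistic remark: your hedge ``every compact element is a finite join of principal ones'' is weaker than needed, since in $\Id(\catL)$ the compact elements \emph{are} the principal ideals; you could simply say this and drop the closure-under-$\vee$ step.
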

        \begin{proof}
            This follows since $(\Spc(\catL)^\vee,\supp) = (\Spc(\catL^{op}), \supp^c)$, see e.g.,\ \cite[Section 4]{Kra24}.
        \end{proof}
    
        One can translate many of the usual properties of primes for commutative rings or tensor-triangulated categories into this setting.
        The essential ingredient is prime lifting.
          
        \begin{lemma}[Prime Lifting]\label{lem:primeliftinglattice}
            Let $F$ be a filter (that is, $F$ is an nonempty upward set and $x, y \in F$ implies $x \wedge y \in F$) in a bounded distributive lattice.
            Suppose $\catI$ is an ideal maximal among those disjoint from $F$. Then $\catI$ is prime.
        \end{lemma}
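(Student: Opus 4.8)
The plan is to run the classical prime-avoidance argument, transplanted from commutative ring theory to bounded distributive lattices; the only inputs beyond the hypotheses are the distributive law and the closure properties of ideals. Note that no appeal to Zorn's lemma is needed, since maximality of $\catI$ is given.

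First I would dispatch properness: since $F$ is nonempty, any $f \in F$ lies outside $\catI$, and because $\catI$ is downward closed this forces $1 \notin \catI$, so $\catI$ is a proper ideal. Next, arguing by contradiction, suppose $x \wedge y \in \catI$ while $x \notin \catI$ and $y \notin \catI$. I would form the ideals $\catI \vee \down x$ and $\catI \vee \down y$; explicitly $\catI \vee \down x = \{z \in \catL \mid z \le i \vee x \text{ for some } i \in \catI\}$, which one checks is an ideal (downward closure is immediate; closure under binary joins uses that $\catI$ has them) and which strictly contains $\catI$ since $x \notin \catI$. Maximality of $\catI$ among ideals disjoint from $F$ then forces both of these larger ideals to meet $F$: there exist $f_1, f_2 \in F$ and $i_1, i_2 \in \catI$ with $f_1 \le i_1 \vee x$ and $f_2 \le i_2 \vee y$.

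The heart of the argument is then to use that $F$ is a filter, so $f_1 \wedge f_2 \in F$, together with distributivity:
\[
f_1 \wedge f_2 \le (i_1 \vee x) \wedge (i_2 \vee y) = (i_1 \wedge i_2) \vee (i_1 \wedge y) \vee (x \wedge i_2) \vee (x \wedge y).
\]
Each joinand on the right lies in $\catI$ — the first three because they are bounded above by $i_1$ or $i_2$, the last by hypothesis — so, $\catI$ being closed under finite joins, the whole right-hand side lies in $\catI$, whence $f_1 \wedge f_2 \in \catI$, contradicting $\catI \cap F = \emptyset$. This shows $x \in \catI$ or $y \in \catI$.

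Finally I would note that this elementwise condition is equivalent to $\catI$ being meet-prime in $\Id(\catL)$, which is the definition of a prime ideal: given ideals $\catJ, \catK$ with $\catJ \cap \catK \subseteq \catI$ but $\catJ \not\subseteq \catI$ and $\catK \not\subseteq \catI$, choose witnesses $x \in \catJ \setminus \catI$ and $y \in \catK \setminus \catI$ and observe that $x \wedge y \in \catJ \cap \catK \subseteq \catI$, contradicting the elementwise statement just proved. I do not anticipate any real obstacle here; if anything, the one point requiring a moment's care is verifying that $\catI \vee \down x$ is genuinely an ideal strictly larger than $\catI$, so that maximality applies.
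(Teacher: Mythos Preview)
Your argument is correct and is exactly the classical proof; the paper itself does not spell this out but simply cites \cite[Theorem I.2.4]{Joh82}, where the same argument appears. There is nothing to add.
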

        \begin{proof}
            This is \cite[Theorem I.2.4]{Joh82}.
        \end{proof}
    
        As in the tensor-triangulated setting, the supports (or complements of supports) of elements are precisely the quasi-compact opens. This is analogous to \cite[Lemma 2.13, Proposition 2.14]{Bal05}.
                
        \begin{proposition}[{\cite[3.1.7]{DST19}}]\label{prop:quasicompacts}
            Let $\catL$ be a bounded distributive lattice.
            We have equalities 
            \[
            K^\circ(\Spc(\catL)^\vee) = \{\supp(x)^c\mid x \in \catL\}\text{ and }K^\circ(\Spc(\catL)) = \{\supp(x) \mid x\in\catL\}.
            \]
            In particular, the Thomason subsets of $\Spc(\catL)^\vee$, equivalently the open subsets of $\Spc(\catL)$, are unions of supports. 
        \end{proposition}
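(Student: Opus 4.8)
The plan is to deduce both equalities from Stone duality, using the general principle that the compact elements of a frame are exactly the quasi-compact opens of its space of points; the passage from $\Spc(\catL)$ to its Hochster dual is then purely formal, or can be done directly via prime lifting.

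I would first treat the second equality. Since $\Id(\catL)$ is a coherent frame and $\Spc(\catL) = \pt(\Id(\catL))$ by definition, the restriction of \Cref{thm:Stone} to coherent frames and spectral spaces (recalled just above) gives in particular an isomorphism of frames $\Id(\catL) \isoto \Omega(\Spc(\catL))$. Tracing through \Cref{prop:frameptsaremeetprimes}, this isomorphism carries an ideal $\catI\subseteq\catL$ to $U_{\catI} = \{\catP\in\Spc(\catL) \mid \catI\not\subseteq\catP\} = \bigcup_{x\in\catI}\supp(x)$, and in particular carries the principal ideal $\down x$ to $\supp(x)$. Next I would record the elementary fact that, in any topological space $X$, an open subset $U$ is quasi-compact if and only if it is a compact element of the frame $\Omega(X)$: a family of opens $(V_j)_j$ satisfies $U\le\bigvee_j V_j$ exactly when it covers $U$, and joins in $\Omega(X)$ are unions, so extracting a finite subcover amounts to extracting a finite subfamily whose join still dominates $U$. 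Hence the isomorphism above restricts to a bijection between the compact elements of $\Id(\catL)$ and $K^\circ(\Spc(\catL))$; since the compact elements of $\Id(\catL)$ are precisely the principal ideals $\down x$, this yields $K^\circ(\Spc(\catL)) = \{\supp(x)\mid x\in\catL\}$.

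For the first equality I would apply the case just proved to the opposite lattice $\catL\op$ and combine it with the fact recorded above that $\Spc(\catL)^\vee$ is homeomorphic to $\Spc(\catL\op)$ in such a way that $\supp_{\catL\op}(x)$ matches $\supp_{\catL}(x)^c$ (cf.\ \cite[Section 4]{Kra24}). This gives $K^\circ(\Spc(\catL)^\vee) = K^\circ(\Spc(\catL\op)) = \{\supp_{\catL\op}(x)\mid x\in\catL\op\} = \{\supp(x)^c\mid x\in\catL\}$. The final assertion is then immediate: by definition the Thomason subsets of $\Spc(\catL)^\vee$ are the unions of complements of its quasi-compact opens, hence, by the equality just established, the unions of the sets $\supp(x)$; and since the $\supp(x)$ form an open basis of $\Spc(\catL)$, these are exactly the open subsets of $\Spc(\catL)$.

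I do not anticipate any serious obstacle; the one point needing a little care is the op/complement bookkeeping in the previous paragraph. If one prefers to sidestep $\catL\op$, the quasi-compactness of each $\supp(x)$ follows directly from prime lifting: whenever $\supp(x)\subseteq\bigcup_i\supp(x_i)$ but $x$ does not lie in the ideal $\catJ$ generated by the $x_i$, the filter $\{y\mid y\ge x\}$ is disjoint from $\catJ$, so \Cref{lem:primeliftinglattice} produces a prime ideal containing every $x_i$ yet missing $x$, contradicting the inclusion; hence $x\le x_{i_1}\vee\dots\vee x_{i_n}$ for finitely many indices, giving the finite subcover $\supp(x)\subseteq\supp(x_{i_1})\cup\dots\cup\supp(x_{i_n})$. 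Conversely any quasi-compact open, being a union of basic opens $\supp(x_i)$, equals a finite such union $\supp(x_{i_1})\cup\dots\cup\supp(x_{i_n}) = \supp(x_{i_1}\vee\dots\vee x_{i_n})$; running the same argument for $\catL\op$ then recovers the first equality as well.
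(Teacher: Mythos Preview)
The paper does not supply its own proof of this proposition; it is stated with a citation to \cite[3.1.7]{DST19} and no argument is given. Your proposal is correct and self-contained: the second equality follows cleanly from the frame isomorphism $\Id(\catL)\cong\Omega(\Spc(\catL))$ together with the identification of compact elements on each side, and the first equality is obtained by passing to $\catL\op$ via the lemma immediately preceding the proposition (which records $(\Spc(\catL)^\vee,\supp)=(\Spc(\catL\op),\supp^c)$). The alternative argument via \Cref{lem:primeliftinglattice} is also sound and has the virtue of being direct, bypassing the op-bookkeeping entirely; either route would serve as a perfectly acceptable in-text proof were one desired.
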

    
    \section{Support theory for lattices and frames}\label{sec:support}

    In this section we spell out support theory for lattices and frames, largely following and building on \cite{BKS07, Kra24} (see also \cite[Chapter 3]{DST19}). Our main purpose is to understand when these support theories are ``classifying''. In particular, we provide an analogue of \cite[Theorem 6.2.1]{NVY22} and \cite[Theorem 3.1]{Del10} giving a characterization of when a support datum is classifying. 
    
    The main takeaway here is that support theory can be viewed as a translation of Stone duality: support theories approximate a frame, and the universal support is that which reconstructs the frame (in which case, the frame is spatial). For this reason it is more natural to work with \emph{open} support data instead of closed, but we briefly remark on the closed support data at the end, see \Cref{rmk:closed_supp}.

    \subsection{Support data for frames}\label{sec:supp_spatial}

    This subsection is mostly tautological, but we introduce these ideas to motivate future constructions which are less tautological, and connect them back to Stone duality.
     \begin{definition}\label{def:support_spatial}
        Let $\catF$ be a frame. 
        \begin{enumerate}
            \item An \textit{open support datum} on $\catF$ is a tuple $(X, \sigma)$ consisting of a sober space $X$ and a frame morphism $\sigma\colon \catL \to \Omega(X)$. Explicitly $\sigma$ is required to satisfy the following properties:
        \begin{enumerate}
            \item $\sigma(0) = \emptyset$, $\sigma(1) = X$,
            \item $\sigma(\vee_i x_i ) = \cup_i \sigma(x_i)$,
            \item $\sigma(x \wedge y) = \sigma(x) \cap \sigma(y)$.
        \end{enumerate}
            \item A morphism of support data $(X, \sigma) \to (Y, \tau)$ is a continuous map $f\colon  X \to Y$ such that $\sigma = f\inv\circ \tau$. 
            \item An open support datum $(X, \sigma)$ on $\catF$ is \textit{classifying} if $\sigma$ is an isomorphism.
        \end{enumerate}
    \end{definition}
    
    Note that a morphism of support data is an isomorphism if and only if the underlying map of topological spaces is a homeomorphism. 
    Morally, an open support datum is a (sober) space which tries to reconstruct $\catF$ through its frame of opens; the classifying support datum, if it exists, is the one that succeeds.

    The following observations are simply reformulations of Stone duality.
    The point being that the universal, i.e.\ final, support datum on a frame $\catF$ is the sober space $Y$ that yields isomorphisms
    \[
        \mathsf{Frm}(\catF,\Omega(X))\cong \mathsf{Sob}(X,Y) 
    \]
    natural in $X\in\mathsf{Sob}$; by \Cref{rem:pre_Stone} one can simply take $Y=\pt(\catF)$.

    \begin{proposition}\label{prop:spatial_support}
        Let $\catF$ be a frame.
        \begin{enumerate}
            \item\label{item:ss1} The pair $(\pt(\catF) , \supp)$ is the final open support datum on $\catF$; it is classifying if and only if $\catF$ is spatial. 
            Explicitly, given any open support datum $(X,\sigma)$, the unique map $f\colon (X,\sigma) \to (\pt(\catF), \supp)$ is given by
            \[
            f(x) = \bigvee\{a \in\catF \mid x\not\in \sigma(a)\}
            \] 
            and satisfies $\sigma(a) = f\inv(\supp(a))$ for all $a \in \catF$.

            \item\label{item:ss2} Let $f\colon (X, \sigma) \to (Y,\tau)$ be a morphism  of open support data on $\catF$. 
            If both support data are classifying, then $f\colon  X \to Y$ is a homeomorphism.
            In particular, when $\catF$ is spatial, an open support datum $(X, \sigma)$ is classifying if and only if the canonical morphism $(X, \sigma) \to (\pt(\catF), \supp)$ is an isomorphism.
        \end{enumerate} 
    \end{proposition}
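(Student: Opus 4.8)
The plan is to read both parts off \Cref{thm:Stone} and \Cref{prop:frameptsaremeetprimes}: essentially nothing here goes beyond Stone duality, and the point of the statement is bookkeeping that matches the general duality to the support-datum packaging in \Cref{def:support_spatial}.

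For part (1), I would first argue that $\supp\colon\catF\to\Omega(\pt(\catF))$, $x\mapsto U_x$, is a frame isomorphism. It is a frame morphism because $U_{\vee_i x_i}=\bigcup_i U_{x_i}$ and $U_{x\wedge y}=U_x\cap U_y$; it is surjective because the topology on $\pt(\catF)$ is by construction $\{U_x\mid x\in\catF\}$ (this collection is already closed under arbitrary unions and finite intersections); and it is injective precisely by the definition of spatiality (equivalently \Cref{prop:equivalent_def_of_spatial}). Hence $(\pt(\catF),\supp)$ is classifying. For finality, given any open support datum $(X,\sigma)$ I would define $f(x)=\{a\in\catF\mid x\notin\sigma(a)\}$ and verify, using only the frame-morphism axioms (a)--(c) for $\sigma$, that $f(x)$ is a proper ideal which is moreover meet-prime, and that $\vee f(x)\in f(x)$ because $\sigma$ preserves arbitrary joins; by the description in \Cref{prop:frameptsaremeetprimes} this is exactly a point of $\catF$. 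A one-line computation then gives $f^{-1}(\supp(a))=\{x\in X\mid a\notin f(x)\}=\sigma(a)$, which simultaneously shows $f$ is continuous (the $\supp(a)$ exhaust the topology of $\pt(\catF)$) and that $f$ is a morphism of support data; the same computation applied to any competitor $g$ forces $g(x)=\{a\mid x\notin\sigma(a)\}=f(x)$, giving uniqueness.

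For part (2), I would observe that if $f\colon(X,\sigma)\to(Y,\tau)$ is a morphism of open support data with both data classifying, then $f^{-1}=\sigma\circ\tau\inv\colon\Omega(Y)\to\Omega(X)$ is a composite of frame isomorphisms, hence itself an isomorphism; since the underlying spaces are sober, the open-set functor is fully faithful on them (this is the content of the restricted duality in \Cref{thm:Stone}), so $f$ is a homeomorphism. The final assertion follows by applying this to the pair $(X,\sigma)$ and the canonical classifying datum $(\pt(\catF),\supp)$ of part (1): if $(X,\sigma)$ is classifying, the canonical morphism is thus a homeomorphism and an isomorphism of support data; conversely, if the canonical morphism $f\colon(X,\sigma)\to(\pt(\catF),\supp)$ is an isomorphism, then $\sigma=f\inv\circ\supp$ exhibits $\sigma$ as a composite of isomorphisms, so $(X,\sigma)$ is classifying.

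The one step I would be most careful about is checking that the candidate $f(x)$ in part (1) really lands in $\pt(\catF)$ rather than merely being some prime ideal: the crucial input is that $\sigma$ preserves \emph{arbitrary} joins, which forces $f(x)$ to contain its own supremum and hence to be the principal ideal on a meet-prime element, exactly as demanded by \Cref{prop:frameptsaremeetprimes}. Everything else is either routine verification against the frame-morphism axioms or a formal consequence of the Stone adjunction.
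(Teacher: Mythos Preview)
Your proposal is correct and follows essentially the same route as the paper's proof: both parts are read off Stone duality, with part (b) argued identically via $f^{-1}=\sigma\circ\tau^{-1}$ being an isomorphism of open-set lattices between sober spaces. The only difference is that the paper invokes the adjunction $\mathsf{SFrm}(\catF,\Omega(X))\cong\mathsf{Sob}(X,\pt(\catF))$ as a black box for part (a), whereas you unpack it explicitly by constructing $f$ and verifying it lands in meet-primes; this is the same argument at a finer level of detail.
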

    \begin{proof}
        \ref{item:ss1}: 
        That $(\pt(\catF), \supp)$ is final comes, as already noted above, from the adjunction
        \[
            \mathsf{Frm}(\catF,\Omega(X))\cong \mathsf{Sob}(X, \pt(\catF)).
        \]
        The classifying part follows as $\supp\colon \catF\to \Omega(\pt(\catF))$ is an isomorphism if and only if $\catF$ is spatial.
        
        \ref{item:ss2}: By definition of a support datum being classifying we have 
        \[
            \Omega(Y)\cong \catF\cong \Omega(X)
        \]
        and this is simply given by $V\mapsto f\inv(V)$ since $\sigma = f\inv\circ \tau$.
        As both $X$ and $Y$ are sober, it follows that $f$ is a homeomorphism.
    \end{proof}

    \subsection{Support data for coherent frames}\label{sec:support_coherent}
    The previous section did nothing more than translate Stone duality. 
    However, when our frame $\catF$ is generated by some bounded distributive sublattice $\catL$, there is a reduction in data one obtains by simply phrasing everything in terms of $\catL$.
    The key observation is that if $X$ is a topological space, restriction along $\catL\subset\catF$ yields a natural isomorphism 
    \[
        \mathsf{Frm}(\catF,\Omega(X)) \cong \mathsf{BDLat}(\catL, \Omega(X)).
    \]
    In this section we will focus on the case $\catF$ is a coherent frame and $\catL$ is its sublattice of compact elements. As these are the main examples we have in mind.
    Moreover, we will restrict to spectral spaces for our support data; this way we get nice criteria to detect classifying.
    As any coherent frame is of the form $\Id(\catL)$ for some bounded distributive lattice $\catL$, i.e.,\ its compacts, by \Cref{prop:dist_lattice=coherent_frame} we can translate everything simply in terms of $\catL$.
       
    The notion of classifying support data for a bounded distributive lattice coincides with the abstract classifying support datum on an ideal lattice from \cite{BKS07}, which generalizes the classical tensor-triangular notion of \cite{Bal05}.
        
    \begin{definition}
        Let $\catL$ be a bounded lattice. 
        \begin{enumerate}
            \item An \textit{open support datum} on $\catL$ is a tuple $(X, \sigma)$ consisting of spectral space $X$ and a (bounded) lattice morphism $\sigma\colon \catL \to \Omega(X)$. Explicitly $\sigma$ satisfies the following properties:
            \begin{enumerate}
                \item $\sigma(0) = \emptyset$, $\sigma(1) = X$,
                \item $\sigma(x \vee y) = \sigma(x) \cup \sigma(y)$,
                \item $\sigma(x \wedge y) = \sigma(x) \cap \sigma(y)$.
            \end{enumerate}
            \item A morphism of support data $(X, \sigma) \to (Y, \tau)$ is a continuous map $f\colon  X \to Y$ such that $\sigma = f\inv\circ \tau$. 
            \item An open support datum $(X, \sigma)$ is \textit{classifying} if the induced support datum on $\Id(\catL)$ is classifying.
            Explicitly, this means we have an order-preserving bijection $\Id(\catL) \to \Omega(X)$ induced by the assignments $\catI \mapsto \cup_{x \in \catI} \sigma(x)$, with inverse induced by the assignment $U \mapsto \{x \in \catL \mid \sigma(x) \subseteq U\}$.
        \end{enumerate}
    \end{definition}

    As the ideal lattice $\Id(\catL)$ is determined by its compact part $\catL$ when $\catL$ is distributive, one loosely speaking gets a reduction in information.
    The spectrum of a lattice is our salient example of classifying support data. In this case, \Cref{prop:spatial_support} translates to lattice-theoretic versions of key results in \cite{Bal05, BKS07}. Note that \Cref{item:cs1} is \cite[Theorem 7 \& Corollary 10]{Kra24}.

    \begin{proposition}\label{prop:coherent_support}
        Let $\catL$ be a bounded distributive lattice.
        \begin{enumerate}
            \item\label{item:cs1} The pair $(\Spc(\catL) , \supp)$ is classifying and is the final open support datum on $\catL$. 
            \item\label{item:cs2} Let $f\colon (X, \sigma) \to (Y,\tau)$ be a morphism of open support data on $\catL$. 
            If both support data are classifying, then $f\colon  X \to Y$ is a homeomorphism.
            In particular, an open support datum $(X, \sigma)$ is classifying if and only if the canonical morphism $(X, \sigma) \to (\Spc(\catL), \supp)$ is an isomorphism.
        \end{enumerate} 
    \end{proposition}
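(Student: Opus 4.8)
The plan is to deduce everything from \Cref{prop:spatial_support} by passing to the ideal lattice. Recall from \Cref{prop:dist_lattice=coherent_frame} that $\Id(\catL)$ is a coherent, hence spatial, frame, with sublattice of compacts the principal ideals $\down x$, $x\in\catL$; that $\Spc(\catL)=\pt(\Id(\catL))$ is spectral by construction; and that for $x\in\catL$ the set $\supp(x)=\{\catP\in\Spc(\catL)\mid x\notin\catP\}$ equals $\supp(\down x)$, the support of the corresponding compact element of $\Id(\catL)$ in the sense of \Cref{prop:spatial_support}. In other words, the support datum $(\Spc(\catL),\supp)$ on $\catL$ is the restriction, along the inclusion $\catL\hookrightarrow\Id(\catL)$, $x\mapsto\down x$, of the final open support datum $(\pt(\Id(\catL)),\supp)$ on the spatial frame $\Id(\catL)$.

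First I would record the dictionary between open support data on $\catL$ and on $\Id(\catL)$. A spectral space $X$ is sober and $\Omega(X)$ is a (coherent) frame, so the restriction isomorphism $\mathsf{Frm}(\Id(\catL),\Omega(X))\cong\mathsf{BDLat}(\catL,\Omega(X))$ recalled before the statement applies: a bounded lattice morphism $\sigma\colon\catL\to\Omega(X)$ extends uniquely to the frame morphism $\bar\sigma\colon\Id(\catL)\to\Omega(X)$, $\bar\sigma(\catI)=\bigcup_{x\in\catI}\sigma(x)$. This identifies open support data $(X,\sigma)$ on $\catL$ with open support data $(X,\bar\sigma)$ on the spatial frame $\Id(\catL)$ whose underlying space is spectral. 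Moreover a continuous map $f\colon X\to Y$ satisfies $\sigma=f\inv\circ\tau$ if and only if $\bar\sigma=f\inv\circ\bar\tau$ — both are frame morphisms $\Id(\catL)\to\Omega(X)$ restricting to the same lattice morphism on $\catL$, so uniqueness of the extension gives the equivalence — hence morphisms of support data correspond as well; and, by the very definition of ``classifying'' for a lattice, $(X,\sigma)$ is classifying on $\catL$ precisely when $(X,\bar\sigma)$ is classifying on $\Id(\catL)$.

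With this dictionary, \ref{item:cs1} and \ref{item:cs2} are immediate translations of \Cref{prop:spatial_support}\ref{item:ss1} and \ref{item:ss2}. For \ref{item:cs1}: $(\Spc(\catL),\supp)=(\pt(\Id(\catL)),\supp)$ is the final, classifying open support datum on $\Id(\catL)$, so its corresponding datum on $\catL$ is classifying and final — given $(X,\sigma)$ on $\catL$, pass to $(X,\bar\sigma)$, invoke \Cref{prop:spatial_support}\ref{item:ss1} for the unique morphism to $(\pt(\Id(\catL)),\supp)$, and reinterpret it as the (necessarily unique) morphism $(X,\sigma)\to(\Spc(\catL),\supp)$. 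For \ref{item:cs2}: a morphism $f$ between classifying open support data on $\catL$ lifts to a morphism between classifying open support data on $\Id(\catL)$, which \Cref{prop:spatial_support}\ref{item:ss2} forces to be a homeomorphism; the ``in particular'' clause follows because $(\Spc(\catL),\supp)$ is final (so the canonical morphism exists) and being classifying is invariant under isomorphism of support data.

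This proof is essentially a translation exercise, so I anticipate no real obstacle. The only point carrying genuine content is that $\sigma\mapsto\bar\sigma$ lands in frame morphisms — the one nontrivial axiom being preservation of finite meets, $\bigcup_{z\in\catI\cap\catJ}\sigma(z)=\bigl(\bigcup_{x\in\catI}\sigma(x)\bigr)\cap\bigl(\bigcup_{y\in\catJ}\sigma(y)\bigr)$, which follows by rewriting the right-hand side as $\bigcup_{x\in\catI,\,y\in\catJ}\sigma(x\wedge y)$ using that $\Omega(X)$ is a frame and that $x\wedge y\in\catI\cap\catJ$. As this is exactly the content of the restriction isomorphism cited above, it need not be reproved; the one thing to stay vigilant about is keeping the two meanings of ``$\supp$'', on $\catL$ and on $\Id(\catL)$, aligned, which is what the restriction-along-$\down(-)$ identification in the first paragraph ensures.
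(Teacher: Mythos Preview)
Your proposal is correct and follows precisely the route the paper intends: the paper gives no explicit proof of this proposition, merely noting beforehand that ``\Cref{prop:spatial_support} translates to lattice-theoretic versions'' and that \ref{item:cs1} appears in \cite{Kra24}. Your write-up is a careful unpacking of exactly that translation via the restriction isomorphism $\mathsf{Frm}(\Id(\catL),\Omega(X))\cong\mathsf{BDLat}(\catL,\Omega(X))$, which the paper records just before the definition of open support datum on $\catL$; there is nothing to add.
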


    Next, we give a lattice-theoretic analogue of \cite[Theorem 3.1]{Del10} and \cite[Theorem 6.2.1]{NVY22} - in particular, we provide conditions for a support datum to be classifying, and hence homeomorphic to the spectrum. We do this in the ``small'' setting, see \Cref{rmk:big_small} for how this differs from the setting of ``big'' triangulated categories.

    \begin{definition}\label{def:supportdatum}
        We say an open support datum $(X, \sigma)$ on a bounded distributive lattice $\catL$ satisfies
        \begin{enumerate}
            \item \textit{injectivity} if $\sigma$ is an injective map of sets,
            \item \textit{faithfulness} if $\sigma(x) = \emptyset$ implies $x = 0$,
            \item \textit{realization} if the quasi-compact opens subsets $K^\circ(X)$ lie in the image of $\sigma$,
            \item \textit{Noetherian realization} if $\sigma$ is surjective.
        \end{enumerate}
    \end{definition}

    \begin{proposition}\label{prop:classifying}
        An open support datum $(X, \sigma)$ for a bounded distributive lattice $\catL$ is classifying if and only if it injective and realizing.
        In particular, $(\Spc(\catL), \supp)$ (uniquely) satisfies injectivity (hence faithfulness) and realization.
    \end{proposition}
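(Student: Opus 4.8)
The point is that ``classifying'' is, by definition, the assertion that the monotone map $\Phi\colon\Id(\catL)\to\Omega(X)$, $\catI\mapsto\bigcup_{x\in\catI}\sigma(x)$, is an isomorphism of frames (with inverse $\Psi\colon U\mapsto\{x\mid\sigma(x)\subseteq U\}$), so the plan is to play this against the coherence equivalence $\Id\colon\mathsf{BDLat}\simeq\mathsf{CohFrm}\colon(-)^{c}$ of \Cref{prop:dist_lattice=coherent_frame}. Two ambient facts get used: $\Id(\catL)$ is coherent with compact part the principal ideals $\down x$; and, $X$ being spectral, $\Omega(X)$ is coherent with compact part $K^{\circ}(X)$ (equivalently $\Omega(X)\cong\Id(K^{\circ}(X))$ via \Cref{thm:Stone}). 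I will also use that the supports $\sigma(x)$ of an open support datum are quasi-compact opens --- this is how the notion is modelled on $\supp$, which is quasi-compact by \Cref{prop:quasicompacts} --- so that $\sigma$ factors through $K^{\circ}(X)\subseteq\Omega(X)$, and under $\Phi$ one has $\Phi(\down x)=\sigma(x)$.

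For the ``only if'' direction, suppose $(X,\sigma)$ is classifying. Then $\Phi$ is an isomorphism of coherent frames, hence restricts to an isomorphism on compact parts: $\down x\mapsto\sigma(x)$ is an order isomorphism from the principal ideals of $\catL$ onto $K^{\circ}(X)$. Precomposing with the bijection $x\mapsto\down x$, the assignment $x\mapsto\sigma(x)$ is a bijection $\catL\to K^{\circ}(X)$ --- which is exactly injectivity of $\sigma$ together with realization. Faithfulness is then automatic, since $\sigma(x)=\emptyset=\sigma(0)$ forces $x=0$.

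For the ``if'' direction, suppose $\sigma$ is injective and realizing. Injectivity upgrades $\sigma$ to an order embedding: $\sigma(a)\subseteq\sigma(b)$ gives $\sigma(a\vee b)=\sigma(b)$, hence $a\vee b=b$, i.e.\ $a\le b$. Since $\im\sigma\subseteq K^{\circ}(X)$ and realization says $K^{\circ}(X)\subseteq\im\sigma$, the map $\sigma\colon\catL\to K^{\circ}(X)$ is a bijective lattice homomorphism, hence a lattice isomorphism. Applying the functor $\Id(-)$ and the identification $\Omega(X)\cong\Id(K^{\circ}(X))$, we obtain an isomorphism $\Id(\catL)\cong\Omega(X)$, which unravels to $\catI\mapsto\bigcup_{x\in\catI}\sigma(x)=\Phi(\catI)$ (here one uses that the image of an ideal of $\catL$ under the order-isomorphism $\sigma$ is already an ideal of $K^{\circ}(X)$). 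Hence $\Phi$ is an isomorphism and $(X,\sigma)$ is classifying. The ``in particular'' now follows: $(\Spc(\catL),\supp)$ is classifying by \Cref{prop:coherent_support}\ref{item:cs1}, hence injective and realizing by the equivalence just proved; injectivity gives faithfulness; and it is the unique classifying datum up to isomorphism by \Cref{prop:coherent_support}\ref{item:cs2} (injectivity of $\supp$ can alternatively be seen directly from prime lifting, \Cref{lem:primeliftinglattice}).

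Modulo the bookkeeping around \Cref{prop:dist_lattice=coherent_frame}, the main obstacle is getting a grip on $\im\sigma$: realization supplies the inclusion $K^{\circ}(X)\subseteq\im\sigma$, but one also needs $\im\sigma\subseteq K^{\circ}(X)$, i.e.\ that the supports are quasi-compact --- this is the one place the bare support-datum axioms do not suffice and one leans on quasi-compactness of supports. Should one instead want to avoid this packaging and argue directly that the comparison map $f\colon X\to\Spc(\catL)$ of \Cref{prop:coherent_support}\ref{item:cs1} is a homeomorphism, the difficulty relocates to surjectivity of $f$: given a prime ideal $\catP\subseteq\catL$ one must realize it as $f(p)$ for a point $p\in\bigcap_{x\notin\catP}\sigma(x)\setminus\bigcup_{x\in\catP}\sigma(x)$, and nonemptiness of this set is a finite-intersection-property argument in the compact patch topology of $X$ --- each finite intersection being of the form $\sigma(x^{\ast})\setminus\sigma(y^{\ast})$ with $x^{\ast}\notin\catP$, $y^{\ast}\in\catP$, nonempty by the order-embedding property above --- which again uses that the $\sigma(x)$ are quasi-compact, hence clopen in that topology; prime lifting (\Cref{lem:primeliftinglattice}) is what one invokes if one attacks the injectivity of $\Phi$ head-on instead, to enlarge a given ideal to a prime witnessing a putative non-membership.
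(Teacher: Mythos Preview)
Your approach is essentially the paper's: both reduce the question of whether $\Phi\colon\Id(\catL)\to\Omega(X)$ is an isomorphism to whether $\sigma$ gives a bijection $\catL\to K^\circ(X)$, invoking the equivalence $\mathsf{BDLat}\simeq\mathsf{CohFrm}$ of \Cref{prop:dist_lattice=coherent_frame}. The paper's proof is a terse one-liner to this effect; your write-up simply unpacks the two directions.

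You are, however, right to flag the containment $\im\sigma\subseteq K^\circ(X)$ as a genuine issue rather than bookkeeping. The paper's definition of an open support datum only asks for a lattice map $\sigma\colon\catL\to\Omega(X)$, and its proof silently assumes $\sigma$ lands in $K^\circ(X)$ when it writes ``$\sigma$ induces a bijection $\catL\to K^\circ(X)$''. Without that hypothesis the ``if'' direction actually fails. Take $X=\mathbb{N}\cup\{\infty\}$ with open sets the subsets of $\mathbb{N}$ together with $X$; this is spectral with $K^\circ(X)$ the finite subsets of $\mathbb{N}$ plus $X$. Let $\catL=K^\circ(X)\cup\{\mathbb{N}\}\subseteq\Omega(X)$, which is a bounded sublattice, and let $\sigma$ be the inclusion. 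Then $\sigma$ is injective and realizing, yet $\Phi$ collapses the ideal $\{F\subseteq\mathbb{N}:F\text{ finite}\}$ and the strictly larger principal ideal $\down\mathbb{N}$ to the same open $\mathbb{N}\in\Omega(X)$, so $(X,\sigma)$ is not classifying. Thus your added assumption that supports are quasi-compact is not a convenience but a correction needed for the statement to hold; with it, both your argument and the paper's go through verbatim.
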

    \begin{proof}
        Consider the order-preserving mapping 
        \begin{equation}\label{eq:supp_class_mapping}
        \Id(\catL) \to \Omega(X),\ \catI \mapsto \bigcup_{x \in \catI} \sigma(x).
        \end{equation}
        By definition $(X, \sigma)$ is classifying if and only if this map is a bijection. 
        But as both sides are coherent frames, \Cref{eq:supp_class_mapping} is a bijection if and only if it induces an isomorphisms on the compact parts, i.e.,\ $\sigma$ induces a bijection
        \[
            \catL \to K^{\circ}(X),\ x \mapsto \sigma(x).
        \]
        Clearly this is equivalent to $\sigma$ being injective and realizing.    
    \end{proof}

    Noetherian realization is strong as it implies $\Spc(\catL)$ is Noetherian. 
    This is an analogue of the assumptions in \cite[Theorem 6.2.1]{NVY22}.

    \begin{corollary}\label{cor:noetherianrealizing}
        Suppose an open support datum $(X,\sigma)$ for a bounded distributive lattice $\catL$ is injective and Noetherian realizing. Then $X$ is Noetherian and $(X,\sigma)$ is a classifying support datum.
    \end{corollary}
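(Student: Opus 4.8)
The plan is to read this off \Cref{prop:classifying} and the description of quasi-compact opens, with no real new content beyond an observation about what surjectivity of $\sigma$ forces.

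First I would note that Noetherian realization is a strengthening of realization: since $\sigma$ is surjective we have $\Omega(X) = \im(\sigma)$, so in particular $K^\circ(X) \subseteq \Omega(X) = \im(\sigma)$, which is exactly the realization condition of \Cref{def:supportdatum}. As $(X,\sigma)$ is also assumed injective, \Cref{prop:classifying} immediately gives that $(X,\sigma)$ is a classifying open support datum; by \Cref{prop:coherent_support}\ref{item:cs2} it is then (uniquely) isomorphic to $(\Spc(\catL),\supp)$.

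It remains to see that $X$ is Noetherian, i.e.\ that every open subset of $X$ is quasi-compact. The key point, which comes out of the proof of \Cref{prop:classifying} (equivalently, from \Cref{prop:quasicompacts} transported along the homeomorphism $X\cong\Spc(\catL)$), is that for a classifying support datum $\sigma$ restricts to a bijection $\catL \isoto K^\circ(X)$, $x \mapsto \sigma(x)$. Hence $K^\circ(X) = \im(\sigma)$. Combining this with $\im(\sigma) = \Omega(X)$ from Noetherian realization, we conclude $K^\circ(X) = \Omega(X)$: every open subset of $X$ is quasi-compact, i.e.\ $X$ is Noetherian. (Alternatively, one may argue lattice-theoretically: the classifying property gives $\Id(\catL)\cong\Omega(X)$, while injectivity plus Noetherian realization gives $\catL \cong \Omega(X)$; comparing, every ideal of $\catL$ must be principal, so $\catL$ — and therefore $\Omega(X)$ — satisfies the ascending chain condition, which is again Noetherianity of $X$.)

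There is essentially no obstacle here; the statement is genuinely a corollary of \Cref{prop:classifying}. The only thing worth flagging is the bookkeeping observation that "Noetherian realizing $\Rightarrow$ realizing" together with the identity $\im(\sigma) = K^\circ(X)$ supplied by the classifying property forces the quasi-compact opens to exhaust all opens.
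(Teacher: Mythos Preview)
Your proof is correct and follows essentially the same approach as the paper: deduce classifying from \Cref{prop:classifying} after noting that Noetherian realizing implies realizing, then conclude Noetherianity from the fact that every open is quasi-compact. The paper is terser on the second step (simply asserting that Noetherian realization forces every open to be quasi-compact and citing a characterization of Noetherian spaces), whereas you spell out the key identity $K^\circ(X)=\im(\sigma)=\Omega(X)$ explicitly; this is the same argument, just with the bookkeeping made visible.
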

    \begin{proof}
        As Noetherian realizing is stronger than realizing, $(X,\sigma)$ is classifying. To see Noetherianity of $X$ use \cite[Proposition 7.13]{BF11} and the fact that by Noetherian realization, every open is quasi-compact. 
    \end{proof}

    We warn that Noetherian realization only implies that $X$ is Noetherian, but says nothing about the Noetherianity of $X^\vee$. In the sequel, we will reformulate a corresponding theorem for closed support data, with an analogous Noetherianity-detection statement for $X^\vee$, see \Cref{def:weak_supp}.

    We can rephrase these results in terms of the universal map into the spectrum. 
    Observe that by Stone duality, a continuous map of spectral topological spaces $f\colon  X \to Y$ is a monomorphism (resp.\ epimorphism) if and only if the induced map on the coherent lattices of opens $f^*\colon \Omega(Y)\to \Omega(X)$ is an epimorphism (resp.\ a monomorphism). 
    Moreover, a map of spectral spaces $f$ is a monomorphism (resp.\ epimorphism) if and only if it is a continuous injective (resp.\ surjective) map if and only if  $f^*$ is surjective (resp.\ injective), see e.g.,\ \cite[Theorems 5.2.2 \& 5.2.5]{DST19}.
    Note, however, that epimorphisms of bounded distributive lattices are not necessarily surjective, although any surjective lattice homomorphism is an epimorphism (see \cite{Mag18} for an example of a non-surjective epimorphism). Therefore one cannot quite state the following purely in terms of the bounded distributive lattice using \Cref{prop:dist_lattice=coherent_frame}. 

    \begin{proposition}
        Let $(X,\sigma)$ be an open support datum for a bounded distributive lattice $\catL$. 
        Then,
        \begin{enumerate}
            \item the universal map $(X,\sigma) \to (\Spc(\catL),\supp)$ is surjective if and only if $(X,\sigma)$ is injective, i.e., $\sigma\colon \catL\to\Omega(X)$ is injective.
            \item the universal map $(X,\sigma) \to (\Spc(\catL),\supp)$ is injective if and only if the map $\Id(\catL)\to\Omega(X)$ induced by $\sigma$ is surjective.
            In particular, the latter is the case if $(X,\sigma)$ is realizing. 
        \end{enumerate}
    \end{proposition}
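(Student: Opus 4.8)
The plan is to reduce both parts to the two facts recalled just above — for a continuous map $f$ of spectral spaces, $f$ is surjective iff $f^*$ is injective, and $f$ is injective iff $f^*$ is surjective — after identifying the pullback $f^*$ along the universal map $f\colon(X,\sigma)\to(\Spc(\catL),\supp)$ with the frame morphism $\bar\sigma\colon\Id(\catL)\to\Omega(X)$, $\catI\mapsto\bigcup_{x\in\catI}\sigma(x)$, under the canonical isomorphism $\Omega(\Spc(\catL))\cong\Id(\catL)$ coming from $\Spc(\catL)=\pt(\Id(\catL))$ (which matches $\supp(x)$ with $\down x$; see \Cref{prop:quasicompacts} for the compacts). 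Indeed, $f$ being a morphism of support data means exactly $\sigma=f^{-1}\circ\supp$, so $f^*(\supp(x))=\sigma(x)=\bar\sigma(\down x)$; since $f^*$ and $\bar\sigma$ are both frame morphisms, and the $\supp(x)$, resp.\ the $\down x$, are the compact elements of $\Omega(\Spc(\catL))$, resp.\ of $\Id(\catL)$, and generate it under joins, the two maps coincide.

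Granting this identification, part (2) is immediate: $f$ is injective iff $f^*=\bar\sigma$ is surjective, which is the first assertion; and for the ``in particular'', the image of $\bar\sigma$ is closed under joins, so if $(X,\sigma)$ is realizing it contains every quasi-compact open of $X$ (each being some $\sigma(x)$), hence — $X$ being spectral, so that the quasi-compact opens form a basis — it contains every open, i.e.\ $\bar\sigma$ is surjective. For part (1), we must show $\bar\sigma$ is injective iff $\sigma$ is injective. Necessity is trivial, since $\sigma$ is the restriction of $\bar\sigma$ to the sublattice $\catL\subseteq\Id(\catL)$ of compact elements. For sufficiency, assume $\sigma$ is injective, and suppose $\bar\sigma(\catI)=\bar\sigma(\catJ)$ with $a\in\catI$. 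Then $\sigma(a)\subseteq\bigcup_{b\in\catJ}\sigma(b)$, and since $\sigma(a)$ is quasi-compact there are $b_1,\dots,b_n\in\catJ$ with $\sigma(a)\subseteq\sigma(b_1)\cup\cdots\cup\sigma(b_n)=\sigma(b_1\vee\cdots\vee b_n)$; thus $\sigma\bigl(a\wedge(b_1\vee\cdots\vee b_n)\bigr)=\sigma(a)$, and injectivity of $\sigma$ forces $a\leq b_1\vee\cdots\vee b_n\in\catJ$, so $a\in\catJ$. By symmetry $\catI=\catJ$, so $\bar\sigma$ — equivalently $f^*$ — is injective and $f$ is surjective. (More conceptually, $\bar\sigma$ is the image of the injection $\sigma$ under the ideal-lattice equivalence of \Cref{prop:dist_lattice=coherent_frame}, which sends injections to injections.)

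The one step that needs care is the sufficiency in part (1): deducing injectivity of $\bar\sigma$ from that of $\sigma$ genuinely uses that each $\sigma(x)$ is quasi-compact — equivalently, that $\bar\sigma$ carries the compact part of $\Id(\catL)$ into the compact part $K^\circ(X)$ of $\Omega(X)$, so that both frames are coherent and $\bar\sigma$ is determined by its restriction to compacts — and the implication is false in general without this; I would therefore keep that hypothesis on the support datum in view throughout. The remaining content is bookkeeping with Stone and Hochster duality together with the recalled facts on continuous maps of spectral spaces.
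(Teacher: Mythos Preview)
Your approach is exactly the paper's: identify the pullback $f^*$ along the universal map with the frame morphism $\bar\sigma\colon\Id(\catL)\to\Omega(X)$ induced by $\sigma$, and then invoke the recalled facts about continuous maps of spectral spaces. The paper compresses all of this into one sentence; you have unpacked the identification and, in particular, supplied an explicit argument for the step the paper leaves implicit, namely that injectivity of $\sigma$ on $\catL$ is equivalent to injectivity of $\bar\sigma$ on $\Id(\catL)$.

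Your cautionary remark at the end is well taken and worth keeping. Your argument for the nontrivial direction of that equivalence does use that each $\sigma(x)$ is quasi-compact, i.e.\ that the universal map $f$ is \emph{spectral}; and the results the paper cites from \cite{DST19} are stated for spectral maps, so the paper's terse proof is implicitly leaning on the same hypothesis. Since the definition of open support datum in the paper only asks for a bounded lattice morphism $\sigma\colon\catL\to\Omega(X)$ and does not force $\sigma$ to land in $K^\circ(X)$, this is a genuine subtlety shared by both proofs rather than a divergence between them. One small caveat: you assert that ``the implication is false in general without this,'' but you do not exhibit a counterexample, and in the attempts one naturally tries (e.g.\ with $\catL$ the finite/cofinite lattice, or with Noetherian targets) quasi-compactness of $X$ itself rescues the argument; I would soften that claim to ``the argument as written uses this hypothesis'' unless you have a specific example in hand.
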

    \begin{proof}
        If $f\colon (X,\sigma) \to (\Spc(\catL), \supp)$ is the support data induced by $\sigma$, then
        $f^*\colon \Id(\catL)\allowbreak=\Omega(\Spc(\catL))\to \Omega(X)$ is simply the map induced by $\sigma$.
        The statements then follow from the above discussion.
    \end{proof}

    \begin{remark}\label{rmk:big_small}
        We require injectivity over faithfulness in contrast to \cite[(5.1.1)]{NVY22} or \cite[Theorem 3.1(S9)]{Del10}. Indeed, it is clear from the proof that we genuinely require $\sigma$ to be an injective homomorphism. 
        However, the weaker condition of $\sigma(x) = \emptyset$ implies $x = 0$ does not imply $\sigma$ is an injective lattice homomorphism without additional assumptions. For an easy example, consider the three element poset $\mathbf{3}$ and the homomorphism given by the assignment $0 \mapsto 0$ and $1, 2\mapsto 2$. 

        In fact, given an essentially small tt-category $\catK$, we may construct a support datum on on $\catK$ (which corresponds to a support datum as above), satisfying faithfulness and realization, but which is not necessarily isomorphic to the Balmer spectrum. 
        Let $X$ be the subset of closed points of $\Spc(\catK)$ endowed with the subset topology and let $\sigma = {\supp}|_{X}$ be induced from restriction. 
        Then it is a routine verification that $(X,\sigma)$ is a well-defined support datum satisfying faithfulness and realization for $\catK$, but if $\Spc(\catK)$ has non-closed points, then $X \not\cong \Spc(\catK)$. 
        
        The key point, which is certainly known to the experts, is that one needs extended support data on big categories, where the Rickard idempotents and (co)localization sequences live, to show faithfulness implies injectivity on the lattice; see \cite[Lemma 3.5]{Del10} and \cite[Theorem 5.3.1]{NVY22}. 
    \end{remark}

    Given the above remark, it is useful to note injectivity is equivalent to the key property of order-reflection shown in \cite[Lemma 3.3]{BCR97}, \cite[Lemma 3.14]{Tho97}, and \cite[Lemma 4.1.9]{KP17}. 
    This a key property for showing that their respective supports indeed are classifying and therefore are the universal support.

    \begin{proposition}
        Suppose $(X, \sigma)$ is an open support datum for a bounded distributive lattice $\catL$. Then $(X,\sigma)$ is injective if and only if $\sigma$ is order-reflecting, i.e., the following holds: for any $x,y \in \catL$, $x \leq y$ if and only if $\sigma(x) \subseteq \sigma(y)$.
    \end{proposition}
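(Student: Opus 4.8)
The statement to prove is an ``iff'': an open support datum $(X,\sigma)$ for a bounded distributive lattice $\catL$ is injective if and only if $\sigma$ is order-reflecting, meaning $x\leq y \iff \sigma(x)\subseteq\sigma(y)$. The plan is to exploit that $\sigma$ is a lattice homomorphism, so it automatically respects order in one direction, and to trade the injectivity of a \emph{map of sets} for injectivity in the \emph{order-theoretic} sense using distributivity. Concretely, one uses the standard fact that for an order-preserving map between posets that also preserves (say) meets, injectivity as a function is equivalent to order-reflection. I will spell this out in the two directions below.

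\begin{proof}
Since $\sigma$ is a lattice homomorphism, it is order-preserving: $x\leq y$ implies $x\wedge y = x$, hence $\sigma(x) = \sigma(x\wedge y) = \sigma(x)\cap\sigma(y)$, so $\sigma(x)\subseteq\sigma(y)$. Thus in both the ``order-reflecting'' condition and in what follows, only the reverse implications are at issue.

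Suppose first that $\sigma$ is order-reflecting. If $\sigma(x) = \sigma(y)$, then $\sigma(x)\subseteq\sigma(y)$ and $\sigma(y)\subseteq\sigma(x)$, so by order-reflection $x\leq y$ and $y\leq x$, whence $x = y$ (antisymmetry in the poset $\catL$). Hence $\sigma$ is injective as a map of sets.

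Conversely, suppose $\sigma$ is injective, and assume $\sigma(x)\subseteq\sigma(y)$ for some $x,y\in\catL$. Then $\sigma(x)\cap\sigma(y) = \sigma(x)$, and since $\sigma$ preserves meets, $\sigma(x\wedge y) = \sigma(x)\cap\sigma(y) = \sigma(x)$. By injectivity of $\sigma$, this forces $x\wedge y = x$, i.e.\ $x\leq y$. Therefore $\sigma$ is order-reflecting.
\end{proof}

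The argument is elementary and uses essentially no feature of $\catL$ beyond its being a lattice (distributivity and boundedness are not needed here, though they are in force for the ambient setup); the only mild subtlety, and the single point worth being careful about, is that ``injective as a map of sets'' must be converted into an order statement via the meet-preservation property rather than treated as order-reflection directly --- this is precisely the distinction flagged in \Cref{rmk:big_small}, where faithfulness ($\sigma(x)=\emptyset \Rightarrow x=0$) is genuinely weaker than injectivity and does \emph{not} suffice to run the converse direction. I do not anticipate any real obstacle; the proof is a two-line verification in each direction once one records that $\sigma$ preserves meets.
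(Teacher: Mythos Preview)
Your proof is correct and essentially the same as the paper's: both exploit that $\sigma$ is a lattice homomorphism to translate between set-theoretic injectivity and order-reflection, with the only cosmetic difference that you phrase everything via meets ($x\leq y \iff x\wedge y = x$) while the paper uses joins ($x\leq y \iff x\vee y = y$). The two arguments are dual and equally elementary.
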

    \begin{proof}
        Note one implication is automatic as $\sigma$ is order preserving. 
        (e.g.,\ $x \leq y$ is equivalent to $x \vee y = y$, so $\sigma(y) = \sigma(x \vee y) = \sigma(x) \cup \sigma(y)$ and $\sigma(x) \subseteq \sigma(y)$.)
        
        First, suppose $(X,\sigma)$ is injective. If $x \not\leq y$, then $y \neq x\vee y$. 
        Consequently, by assumption, $\sigma(y)\neq\sigma(x\vee y) =\sigma(x) \cup \sigma(y)$ and hence $\sigma(x) \not\subseteq \sigma(y)$. 
        Conversely, if $(X,\sigma)$ is not injective, there are $x\neq y$ with $\sigma(x) = \sigma(y)$. 
        Clearly, $x\not\leq y$ or $y\not\leq x$ showing that $\sigma$ does not reflect the order.
    \end{proof}

    \begin{remark}\label{rmk:closed_supp}
        In the abstract lattice-theoretic setting we consider in this section, it is natural to speak about open support data, as this is more in line with the language of Stone duality.
        One can still speak of closed support data, changing the lattice of open subsets by the lattice of closed subsets in the definition. With this view, one can make sense of $\Spc(\catL)^\vee$ as the universal closed support datum, as in \cite[Theorem 5]{Kra24}, and state similar detection theorems for classifying support data along the lines of \Cref{prop:classifying}.
    
        A closed support datum being classifying, as for example in \cite{BKS07}, means that the closed support data yields an isomorphism $\Id(\catL)\cong\Omega(X^\vee)$, i.e.,\ an isomorphism with the Thomason subsets of $X$. This requires that the closed support datum factors through the sublattice of closed subsets with quasi-compact complement, in which case, through Hochster duality, one can translate everything into open support data. 

    \end{remark}

    \section{A lattice-theoretic Cohen's theorem}\label{sec:Cohen}

    In \cite{Bar25} Barthel proved a tensor triangular version of Cohen's theorem \cite{Coh50}, characterizing those tt-categories with weakly Noetherian spectrum whose spectrum is finite: these are exactly those for which every radical $\otimes$-ideal is finitely generated. We extend this result here, and in the context of open support data, the result describes Noetherianity on the nose. 
    Weakly-Noetherian spectral spaces were introduced in \cite{BHS23} to expand the theory of stratification to big tt-categories with non-Noetherian Balmer spectra. Zou \cite{Zou23} further showed that the notion is optimal - if a tt-category is stratified, its Balmer spectrum is weakly Noetherian.

    \begin{definition}
        Given a spectral space $X$, we say $X$ is \textit{inverse-}\texttt{P}, for any property \texttt{P}, if $X^\vee$ satisfies property \texttt{P}. For instance, if $X^\vee$ is Noetherian, we say $X$ is \textit{inverse-Noetherian} (note $X$ need not be Noetherian). 
    \end{definition}

    \begin{definition}[{\cite[Definition 2.3]{BHS23}}]
    Let $X$ be a spectral space. 
    \begin{enumerate}
        \item A subset $V \subseteq X$ is \textit{weakly visible} if it is the intersection of a Thomason subset and the complement of a Thomason subset (i.e., the intersection of an open set and a closed set in $X^\vee$).
        \item A point $x \in X$ is \textit{weakly visible} if the set $\{x\}$ is and $X$ is \textit{weakly Noetherian} if all its points are.
    \end{enumerate}
    \end{definition}
    \begin{remark}
        Again, $X$ is spectral as in the previous definition.
        \begin{enumerate}
            \item Noetherian spectral spaces and $T_1$ (equivalently Hausdorff, equivalently profinite) spectral spaces are weakly Noetherian, see \cite[Remark 2.4]{BHS23}.
            \item By \cite[Lemma 3]{Bar25}, $X$ is finite if and only if $X$ is weakly Noetherian and inverse-Noetherian. 
        \end{enumerate}
    \end{remark}

    We translate Barthel's results to the lattice-theoretic setting. The proofs are the same as in \cite{Bar25}, replacing the tensor-triangulated category $\catK$ with a bounded distributive lattice $\catL$, the lattice of radical thick $\otimes$-ideals $T_r(\catK)$ with the ideal lattice $\Id(\catL)$, $\Spc(\catK)$ with $\Spc(\catL)^\vee$, and finitely generated ideals with compact elements of $\Id(\catL)$, i.e., principal ideals (by (\Cref{prop:dist_lattice=coherent_frame}).

    \begin{proposition}[{\cite[Proposition 6]{Bar25}}]
        For a bounded distributive lattice $\catL$, the following are equivalent:
        \begin{enumerate}
            \item $\Spc(\catL)^\vee$ is inverse-Noetherian, i.e.\ $\Spc(\catL)$ is Noetherian;
            \item every ideal of $\catL$ is principal;
            \item every prime ideal of $\catL$ is principal;
        \end{enumerate}
    \end{proposition}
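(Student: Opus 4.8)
The plan is to establish $(1)\Leftrightarrow(2)$ directly from Stone duality and then close the loop via the only non-formal implication, $(3)\Rightarrow(2)$, which is a transcription of Cohen's theorem to bounded distributive lattices; the implication $(2)\Rightarrow(3)$ is trivial, as prime ideals are ideals. First observe that, Hochster duality being an involution, $(\Spc(\catL)^\vee)^\vee=\Spc(\catL)$, so ``$\Spc(\catL)^\vee$ is inverse-Noetherian'' is literally the assertion that $\Spc(\catL)$ is Noetherian, exactly as the ``i.e.'' in (1) records.

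For $(1)\Leftrightarrow(2)$: applying \Cref{thm:Stone} to the coherent (hence spatial) frame $\Id(\catL)$ gives a natural frame isomorphism $\Omega(\Spc(\catL))\cong\Id(\catL)$, an open $U$ corresponding to the ideal $\{x\in\catL\mid\supp(x)\subseteq U\}$ with inverse $\catI\mapsto\bigcup_{x\in\catI}\supp(x)$; under it the quasi-compact opens correspond, by \Cref{prop:quasicompacts}, to the compact elements of $\Id(\catL)$, which are precisely the principal ideals $\down x$. Since a topological space is Noetherian if and only if all of its open subsets are quasi-compact, we conclude: $\Spc(\catL)$ is Noetherian $\iff$ every open of $\Spc(\catL)$ is quasi-compact $\iff$ every element of $\Id(\catL)$ is compact $\iff$ every ideal of $\catL$ is principal.

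For $(3)\Rightarrow(2)$: suppose, for contradiction, that some ideal of $\catL$ is not principal. The poset $\mathcal{S}$ of non-principal ideals is then nonempty, and it is closed under unions of chains --- if $\bigcup_\alpha\catI_\alpha=\down x$ then $x\in\catI_\alpha$ for some $\alpha$, whence $\catI_\alpha=\down x$ is principal --- so Zorn's lemma furnishes a maximal element $\catI\in\mathcal{S}$. I claim $\catI$ is prime, contradicting (3). If not, choose $a,b\notin\catI$ with $a\wedge b\in\catI$, and set $\catA:=\catI\vee\down a$ and $\catB:=(\catI:a)=\{y\in\catL\mid y\wedge a\in\catI\}$; distributivity makes $\catB$ an ideal, closure under joins using $(y_1\vee y_2)\wedge a=(y_1\wedge a)\vee(y_2\wedge a)$, and plainly $a\in\catA\setminus\catI$ and $b\in\catB\setminus\catI$. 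Thus $\catA$ and $\catB$ both properly contain $\catI$, so maximality forces $\catA=\down u$ and $\catB=\down w$ for some $u,w\in\catL$. Now $\catI=\catA\cap\catB=\down(u\wedge w)$: the inclusion $\catI\subseteq\catA\cap\catB$ is immediate, and conversely if $z\leq u$ and $z\leq w$, pick $i\in\catI$ with $z\leq i\vee a$ (possible as $z\in\catA$); then $z=z\wedge(i\vee a)=(z\wedge i)\vee(z\wedge a)$, where $z\wedge i\leq i\in\catI$ and $z\wedge a\leq w\wedge a\in\catI$, so $z\in\catI$. Hence $\catI$ is principal, a contradiction, and the claim follows.

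The equivalence $(1)\Leftrightarrow(2)$ is pure Stone-duality bookkeeping, so the substantive step is $(3)\Rightarrow(2)$; within it, the crux is choosing the right lattice analogues of Cohen's ring-theoretic gadgets --- ``product of ideals'' $\rightsquigarrow$ ``intersection'', the ideal quotient $(\catI:a)\rightsquigarrow\{y\mid y\wedge a\in\catI\}$ --- and verifying $\catI=\catA\cap\catB$; once this dictionary is fixed the argument is the classical one, and indeed it runs verbatim as in \cite{Bar25} with a bounded distributive lattice in place of a tt-category, which is all we claim.
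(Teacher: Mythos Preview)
Your proof is correct and follows precisely the route the paper indicates: the paper gives no proof of its own but simply states that Barthel's argument in \cite{Bar25} goes through verbatim after replacing $\catK$ by $\catL$, $T_r(\catK)$ by $\Id(\catL)$, $\Spc(\catK)$ by $\Spc(\catL)^\vee$, and finitely generated ideals by principal ones, which is exactly what you have written out in detail. Your Stone-duality argument for $(1)\Leftrightarrow(2)$ and the Cohen-style Zorn argument for $(3)\Rightarrow(2)$ are both sound, and your explicit verification that $\catI=\catA\cap\catB$ via $z=(z\wedge i)\vee(z\wedge a)$ is the correct distributive-lattice analogue of the ring-theoretic step.
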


    \begin{corollary}[{\cite[Theorem 7]{Bar25}}]\label{thm:barthelthm}
        Let $\catL$ be a bounded distributive lattice. The following are equivalent:
        \begin{enumerate}
            \item $\Spc(\catL)^\vee$ is weakly Noetherian and every ideal in $\catL$ is principal;
            \item $\Spc(\catL)^\vee$ is weakly Noetherian and every prime ideal in $\catL$ is principal;
            \item $\Spc(\catL)^\vee$ is finite.
        \end{enumerate}
    \end{corollary}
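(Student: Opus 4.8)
The plan is to deduce this corollary directly from the preceding proposition (the lattice translation of \cite[Proposition 6]{Bar25}) together with the finiteness criterion recorded in item (2) of the Remark following the definition of weak visibility, namely that a spectral space $X$ is finite if and only if $X$ is both weakly Noetherian and inverse-Noetherian. We apply that criterion with $X = \Spc(\catL)^\vee$. Since Hochster duality is an involution, $(\Spc(\catL)^\vee)^\vee = \Spc(\catL)$, so ``$\Spc(\catL)^\vee$ is inverse-Noetherian'' unwinds to ``$\Spc(\catL)$ is Noetherian''; likewise $\Spc(\catL)^\vee$ and $\Spc(\catL)$ have the same underlying set, so one is finite precisely when the other is, and (3) may be read either way.

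First I would invoke the preceding proposition, which tells us that for a bounded distributive lattice $\catL$ the following are equivalent: $\Spc(\catL)$ is Noetherian; every ideal of $\catL$ is principal; every prime ideal of $\catL$ is principal. Adjoining to each of these the standing hypothesis ``$\Spc(\catL)^\vee$ is weakly Noetherian'', we see that conditions (1) and (2) of the corollary are each equivalent to the single assertion that $\Spc(\catL)^\vee$ is weakly Noetherian and inverse-Noetherian. Then I would apply the finiteness criterion above: this conjunction is equivalent to $\Spc(\catL)^\vee$ being finite, which is condition (3). Chaining the equivalences yields (1) $\Leftrightarrow$ (3) and (2) $\Leftrightarrow$ (3), hence the equivalence of all three.

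There is no real obstacle here beyond bookkeeping, since the substantive content is isolated in the cited results: the lattice form of \cite[Proposition 6]{Bar25} (whose proof, as noted in the paragraph preceding it, is a verbatim transcription of Barthel's argument with $\catK$ replaced by $\catL$, radical $\otimes$-ideals by ideals, and finite generation by compactness) and \cite[Lemma 3]{Bar25}. The only point warranting a moment's care is keeping the Hochster dual straight — ``inverse-Noetherian'' is a condition on $(\Spc(\catL)^\vee)^\vee = \Spc(\catL)$, not on $\Spc(\catL)^\vee$ itself — but as just observed this is handled by the involutivity of $(-)^\vee$ on spectral spaces.
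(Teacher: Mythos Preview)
Your proposal is correct and matches the paper's approach: the paper gives no separate proof for this corollary, instead stating in the paragraph preceding the proposition that the proofs are identical to Barthel's upon making the indicated substitutions, and Barthel's Theorem~7 is deduced from his Proposition~6 and Lemma~3 in exactly the way you describe. Your write-up simply makes explicit the short chain of equivalences that the paper leaves implicit.
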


    The weak Noetherianity condition for $\Spc(\catL)^\vee$ is necessary: as Barthel notes, the stable homotopy category of finite $p$-local spectra $\on{Sp}^{\text{fin}}_{(p)}$ gives a counterexample (see \Cref{def:objlattice} for how one translates this to a bounded distributive lattice).

    \section{Noncommutative tensor-triangular geometry}\label{sec:nctt}
    
    We shift our focus to noncommutative tensor-triangular geometry, applying our lattice-theoretic machinery towards the classification of semiprime thick $\otimes$-ideals, i.e., the noncommutative analogue of Balmer's classification of radical thick $\otimes$-ideals in tensor-triangular geometry \cite[Theorem 4.10]{Bal05}. 
    
    \subsection{Recollections}

    \begin{notation}
        Throughout, we assume $\catK$ is an \textit{essentially small monoidal-triangulated category}, in particular the tensor product is exact but not necessarily symmetric or braided. 
    \end{notation}
    
    See, e.g., \cite{EGNO15} for more on monoidal categories and \cite{Ne01,HJR11} for more on triangulated categories. 

    \begin{definition}[\cite{NVY22}]
    Let $\catK$ be a monoidal-triangulated category.
        \begin{enumerate}
            \item A \textit{thick subcategory} of $\catK$ is a full triangulated subcategory of $\catK$ closed under direct summands. A thick subcategory $\catI \subseteq \catK$ is a \textit{thick $\otimes$-ideal} if in addition, for all $x \in \catI$ and $y \in \catK$, $x \otimes y \in \catK$ and $y \otimes x \in \catK$. 
            
            \item A proper thick $\otimes$-ideal $\catP \subset \catK$ is \textit{prime} if for all thick $\otimes$-ideals $\catI, \catJ$ of $\catK$, $\catI \otimes \catJ \subseteq \catP$ implies $\catI \subseteq \catP$ or $\catJ \subseteq \catP$. Equivalently $\catP$ is prime if for all $x, y \in\catK$, $x \otimes \catK \otimes y \subseteq \catP$ implies $x \in \catK$ or $y \in \catK$. Finally, $\catP$ is \textit{completely prime} if for all $x, y \in\catK$, $x \otimes y \in \catP$ implies $x\in\catP$ or $y \in\catP$. If $\catK$ is symmetric, every prime ideal is completely prime.  

            \item A thick $\otimes$-ideal $\catI$ is \textit{semiprime} if it is an intersection of prime thick $\otimes$-ideals (with the convention that $\catK$ is the empty intersection). Equivalently, $\catI$ is semiprime if for all $x\in\catK$, $x \otimes \catK \otimes x \in \catI$ implies $x \in \catI$, or for any thick $\otimes$-ideals $\catJ$ of $\catK$, $\catJ \otimes \catJ \subseteq \catI$  implies $\catJ \subseteq\catI$. 

            \item A thick $\otimes$-ideal $\catI$ is \textit{radical} if for all $x \in \catK$ and $n \in \bbN^+$, $x\in\catI$ if and only if $x^{\otimes n} \in \catI$. When $\catK$ is symmetric, radical equals semiprime. 
        \end{enumerate}
        We write $T(\catK)$ (resp.\ $T_{s}(\catK)$ and $T_{r}(\catK)$) to denote the lattice of thick $\otimes$-ideals (resp. semiprime and radical thick $\otimes$-ideals). Given an element $x \in \catK$, we write $\langle x\rangle$ (resp.\ $\langle x\rangle_s$ and $\langle x\rangle_r$) to denote the thick $\otimes$-ideal (resp.\ semiprime and radical thick $\otimes$-ideal) generated by $x$. 
    \end{definition}
    \begin{remark}
        More concrete descriptions of the hulls are given as follows:
        $\langle x\rangle$ is the intersection of all ideals containing $x$, and equivalently the collection of all objects which can be built from $x$ in finitely many steps by taking cones, direct summands, and tensoring by objects of $\catK$ on the left or right. By definition, $\langle x\rangle_s$ is the intersection of all prime ideals containing $x$, and by \cite[Theorem 6.4]{Mil25}, $\langle x\rangle_r$ is equivalently the intersection of all completely prime ideals containing $x$ (if any exist). 
    \end{remark}
    
    \begin{definition}
        The \textit{noncommutative Balmer spectrum} $\Spc(\catK)$ of a monoidal-triangulated category $\catK$ is, as a set, the collection of prime thick $\otimes$-ideals of $\catK$ equipped with the topology defined by taking the supports $\supp(x) := \{\catP \in \Spc(\catK) \mid x \not\in \catP\}$ as a basis of \textbf{closed} subsets. 
    \end{definition}

    The Balmer spectrum was introduced by Balmer for tensor-triangulated categories (i.e.,\ symmetric monoidal-triangulated categories) \cite{Bal05}, and extended to the noncommutative setting by Nakano--Vashaw--Yakimov \cite{NVY22}.
    The connection to lattices was first established in Buan--Krause--Solberg \cite{BKS07}, expanded on by Kock--Pitsch \cite{KP17}, and recently revisited by Gratz--Stevenson and Krause \cite{GS23, Kra24}. 

    \subsection{\texorpdfstring{$T_s(\catK)$}{Ts(K)} is a spatial frame}

    We will predominantly focus on the lattice of semiprimes. Meets and joins have a simple description, demonstrating that $T_s(\catK)$ is a distributive lattice (in fact, as we shall see below, it is always a spatial frame).
    By \cite[Lemma 6.4.2]{GS23}, for $\catI, \catJ \in T_s(\catK)$, we have $ \catI \wedge \catJ = \catI\cap\catJ = \langle \catI \otimes \catJ \rangle_s  = \langle \catJ \otimes \catI \rangle_s $.
    Moreover, $T_s(\catK)$ is complete with arbitrary meets corresponding to intersections, and arbitrary joins, for $\{\catJ_i\}_i\subseteq T_s(\catK)$, are given as $\vee_i \catJ_i=\langle \cup_i\catJ_i\rangle_s$.

    The following definition was introduced in \cite{GS23}.
    \begin{definition}
        We say a thick $\otimes$-ideal $\catP$ is \textit{prime with respect to semiprimes} if for all \textit{semiprime} $\otimes$-ideals $\catI, \catJ$, $\catI \otimes \catJ \subseteq \catP$ implies $\catI \subseteq \catP$ or $\catJ \subseteq \catP$. 
    \end{definition}
    
    Of course, a prime thick $\otimes$-ideal is prime with respect to semiprimes. We claim that, in fact, any thick $\otimes$-ideal $\catP$ is prime with respect to semiprimes if and only if it is prime. The key is a semiprime analogue of \cite[Lemma 3.1.2]{NVY22}.

    \begin{lemma}\label{lem:semiprimeidealinclusion}
        For every two collections $\catM, \catN \subset \catK$ of objects, we have an inclusion \[\langle \catM \rangle_s  \otimes \langle \catN \rangle_s  \subseteq \langle \catM \otimes \catK \otimes \catN\rangle_s .\]
    \end{lemma}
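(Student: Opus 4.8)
The plan is to reduce everything to a statement about prime thick $\otimes$-ideals, exploiting that for a collection $\catS$ of objects, $\langle \catS \rangle_s$ is the intersection of all prime thick $\otimes$-ideals containing $\catS$ (and equals $\catK$, making the desired inclusion trivial, if there are no such primes). So assume there is at least one prime thick $\otimes$-ideal containing $\catM \otimes \catK \otimes \catN$; then $\langle \catM \otimes \catK \otimes \catN \rangle_s = \bigcap_{\catP} \catP$, the intersection ranging over all such primes $\catP$, and it suffices to prove $\langle \catM \rangle_s \otimes \langle \catN \rangle_s \subseteq \catP$ for each of them.

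Fix such a prime $\catP \supseteq \catM \otimes \catK \otimes \catN$. The key observation --- the semiprime-flavoured replacement for the element bookkeeping in \cite[Lemma 3.1.2]{NVY22} --- is that $\catP$ must already contain one of the two collections outright: $\catM \subseteq \catP$ or $\catN \subseteq \catP$. Indeed, if some $m_{0} \in \catM$ lies outside $\catP$, then for every $n \in \catN$ we have $m_{0} \otimes \catK \otimes n \subseteq \catM \otimes \catK \otimes \catN \subseteq \catP$, so the element-wise characterization of primality forces $n \in \catP$; hence $\catN \subseteq \catP$.

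To conclude, say $\catM \subseteq \catP$ (the case $\catN \subseteq \catP$ being symmetric). Since a prime is in particular semiprime, $\catP$ is a semiprime thick $\otimes$-ideal containing $\catM$, hence contains the smallest such, namely $\langle \catM \rangle_s \subseteq \catP$; and as $\langle \catM \rangle_s$ is a thick $\otimes$-ideal it absorbs tensoring on the right, so $\langle \catM \rangle_s \otimes \langle \catN \rangle_s \subseteq \langle \catM \rangle_s \subseteq \catP$. Intersecting over all the primes $\catP$ above $\catM \otimes \catK \otimes \catN$ then yields the claim. I do not expect a genuine obstacle here: the content is entirely in the ``swallowing'' observation of the middle paragraph, namely that primality propagates through the sandwich $\catM \otimes \catK \otimes \catN$, after which the semiprime hull is pinned down simply because primes are themselves semiprime. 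The only points needing care are the degenerate case with no prime above $\catM \otimes \catK \otimes \catN$, and keeping straight the convention that $\langle - \rangle_s$ of a collection means the smallest semiprime thick $\otimes$-ideal containing it.
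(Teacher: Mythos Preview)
Your proof is correct, but it takes a genuinely different route from the paper's. The paper adapts \cite[Lemma 3.1.2]{NVY22} directly: it defines the auxiliary subcategory $\catI = \{x \mid x \otimes y \otimes n \in \langle \catM \otimes \catK \otimes \catN\rangle_s \text{ for all } y \in \catK, n \in \catN\}$, verifies by hand that $\catI$ is a semiprime thick $\otimes$-ideal (the ``additional technicality'' being the semiprimality check), deduces $\langle \catM \rangle_s \subseteq \catI$, and then iterates on the other side. Your argument instead externalizes the semiprime hull as an intersection of primes and reduces to a single prime $\catP$, where the element-wise characterization of primality from \cite[Theorem 3.2.2]{NVY22} does all the work via the swallowing observation $\catM \subseteq \catP$ or $\catN \subseteq \catP$.

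Your approach is shorter and avoids the somewhat fiddly verification that the auxiliary $\catI$ is semiprime; it trades that for an appeal to the prime characterization, which is already available from \cite{NVY22} and is not circular (the lemma is only used later to prove the \emph{semiprime} variant $(d) \Rightarrow (c)$ in \Cref{prop:prime=prime_wrt_semiprime}). The paper's approach, on the other hand, is more self-contained and makes no use of prime lifting or the element-wise prime criterion. Both are perfectly valid; yours is arguably the more transparent one once the description of $\langle - \rangle_s$ as an intersection of primes is in hand.
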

    \begin{proof}
        This follows essentially the same as the proof of \cite[Lemma 3.1.2]{NVY22} with one additional technicality. First, we show that $\langle \catM \rangle_s  \otimes \catN \subseteq \langle \catM \otimes \catK \otimes \catN\rangle_s $. Let $\catI$ denote the full subcategory of objects $x$ such that for all $n\in \catN$ and $y \in \catK$, $x \otimes y \otimes n \in \langle \catM \otimes \catK \otimes \catN\rangle_s $. Obviously, $\catM \subseteq \catI$. 
        
        We claim that $\catI$ is a semiprime thick $\otimes$-ideal. Verifying $\catI$ is thick, triangulated, and a $\otimes$-ideal follows identically as in \emph{loc.\ cit.} To show $\catI$ is semiprime, let $x \in \catK$ and suppose $x \otimes z \otimes x \in \catI$ for all $z \in \catK$. Then equivalently, $x \otimes z \otimes x \otimes y \otimes n \in \langle \catM \otimes \catK \otimes \catN \rangle_s $ for all $z, y \in \catK$ and $n \in \catN$. In particular, setting $z = y \otimes n\otimes z'$ for any $z' \in \catK$ yields \[(x \otimes y \otimes n) \otimes z' \otimes (x \otimes y \otimes n) \in \langle \catM \otimes \catK \otimes \catN \rangle_s \] for all $y, z' \in \catK$ and $n \in \catN$, therefore semiprimality implies $x \otimes y \otimes n \in \langle \catM \otimes \catK \otimes \catN \rangle_s $, hence $x \in \catI$ as desired. Because $\catM \subseteq \catI$ and $\catI$ is a semiprime thick $\otimes$-ideal, $\langle \catM \rangle_s  \subseteq \catI$ as well. Therefore, $\langle \catM \rangle_s  \otimes \catN \subseteq \langle \catM \otimes \catK \otimes \catN\rangle_s $.

        Then by analogous arguments, we have $\catM \otimes \langle \catN \rangle_s  \subseteq \langle \catM \otimes \catK \otimes \catN \rangle_s $.
        Lastly, replacing $\catN$ by $\langle \catN \rangle_s $ in the previous paragraph and using the last sentence we have $\langle\catM\rangle_s  \otimes \langle \catN \rangle_s  \subseteq \langle \catM \otimes \catK \otimes \catN \rangle_s $, as desired. 
    \end{proof}

    \begin{proposition}\label{prop:prime=prime_wrt_semiprime}
        Let $\catP$ be a thick $\otimes$-ideal. The following are equivalent:
        \begin{enumerate}
            \item\label{item:p=psp1} $\catP$ is prime, i.e., for all thick $\otimes$-ideals $\catI, \catJ$, $\catI \otimes \catJ \subseteq \catP$ implies $\catI \subseteq \catP$ or $\catJ \subseteq \catP$;
            \item\label{item:p=psp3} For all $x,y \in\catK$, $x \otimes \catK \otimes y \subseteq \catP$ implies $x\in \catP$ or $y \in\catP$.
            \item\label{item:p=psp4} $\catP$ is prime with respect to semiprimes, i.e., for all semiprime thick $\otimes$-ideals $\catI, \catJ$, $\catI \otimes \catJ \subseteq \catP$ implies $\catI \subseteq \catP$ or $\catJ \subseteq \catP$;
            \item\label{item:p=psp5} $\catP$ is meet-prime in $T_s(\catK)$. 
        \end{enumerate}
    \end{proposition}
    \begin{proof}
        \ref{item:p=psp1} $\Leftrightarrow$ \ref{item:p=psp3} and \ref{item:p=psp4} $\Leftrightarrow$ \ref{item:p=psp5} are \cite[Theorem 3.2.2]{NVY22} and \cite[Lemma 6.4.3]{GS23} respectively. 
        Evidently \ref{item:p=psp1} implies \ref{item:p=psp4}, so it rests to show \ref{item:p=psp4} implies \ref{item:p=psp3}.
        Suppose $\catP$ is prime and $x \otimes \catK \otimes y \subseteq \catP$. Since $\catP$ is semiprime we have using \Cref{lem:semiprimeidealinclusion} an inclusion \[\langle x \rangle_s  \otimes \langle y \rangle_s  \subseteq \langle x\otimes \catK \otimes y \rangle_s  \subseteq \catP,\] and primality implies $\langle x\rangle_s  \subseteq \catP$ or $\langle y \rangle_s  \subseteq \catP$. Therefore $x \in \catP$ or $y \in \catP$, as desired. 
    \end{proof}

    \begin{corollary}\label{cor:thicksarespatialframe}
        Let $\catK$ be a monoidal-triangulated category. The lattice $T_s(\catK)$ of semiprime thick $\otimes$-ideals is a spatial frame.
    \end{corollary}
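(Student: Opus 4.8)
The plan is to show that $T_s(\catK)$ satisfies the definition of a spatial frame by invoking the machinery already assembled. First I would verify that $T_s(\catK)$ is a frame: we are told meets are intersections, arbitrary joins are $\vee_i\catJ_i = \langle\cup_i\catJ_i\rangle_s$, and that $T_s(\catK)$ is a distributive lattice via the formula $\catI\wedge\catJ = \langle\catI\otimes\catJ\rangle_s = \langle\catJ\otimes\catI\rangle_s$. It remains to check the frame distributivity law, namely that arbitrary joins distribute over finite meets: $\catJ\cap\bigl(\vee_i\catJ_i\bigr) = \vee_i(\catJ\cap\catJ_i)$. The inclusion $\supseteq$ is automatic from monotonicity. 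For $\subseteq$, I would use the semiprime analogue of the ideal-product lemma, \Cref{lem:semiprimeidealinclusion}, to write $\catJ\cap(\vee_i\catJ_i) = \langle\catJ\otimes(\vee_i\catJ_i)\rangle_s$ (or the reverse product), expand $\vee_i\catJ_i$, and push the tensor past the join; since $\otimes$ is exact and the join is a semiprime hull, products of $\catJ$ with generators of the $\catJ_i$ land inside $\langle\cup_i(\catJ\otimes\catJ_i)\rangle_s \subseteq \vee_i(\catJ\cap\catJ_i)$. This is the one genuinely computational point, but it is a direct mimicry of the commutative argument using \Cref{lem:semiprimeidealinclusion}.

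Next I would establish spatiality. By \Cref{prop:equivalent_def_of_spatial}, it suffices to show every element of $T_s(\catK)$ is a meet of meet-prime elements. By \Cref{prop:prime=prime_wrt_semiprime}, the meet-prime elements of $T_s(\catK)$ are exactly the prime thick $\otimes$-ideals of $\catK$. So spatiality of $T_s(\catK)$ is precisely the statement that every semiprime thick $\otimes$-ideal is an intersection of primes — which is the definition of semiprime. Hence spatiality is immediate once the identification of meet-primes is in hand.

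The main obstacle I anticipate is purely the frame-distributivity verification: one must be careful that the semiprime hull interacts correctly with infinite joins, and that \Cref{lem:semiprimeidealinclusion} (stated for two collections of objects) is applied at the level of the generating sets rather than the ideals themselves. Everything else is a bookkeeping application of \Cref{prop:prime=prime_wrt_semiprime}, \Cref{prop:equivalent_def_of_spatial}, and the definition of semiprimality. I would close by remarking that, consequently, $(\pt(T_s(\catK)),\supp) = (\Spc(\catK),\supp)$ as sets (the points being the primes), with $\supp(x) = \{\catP : x\notin\catP\}$ an open base — this is exactly the content of \Cref{thm:thmA}, to be recorded in the next statement.
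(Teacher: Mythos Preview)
Your proposal is correct, and the spatiality half matches the paper exactly: invoke \Cref{prop:equivalent_def_of_spatial} and \Cref{prop:prime=prime_wrt_semiprime}, then observe that ``every semiprime is an intersection of primes'' is the definition.

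For frame distributivity, however, the paper takes a shorter route than yours. Rather than computing with the tensor-product formula and \Cref{lem:semiprimeidealinclusion}, the paper argues directly via the prime decomposition: both $\catI \cap \langle\cup_i \catJ_i\rangle_s$ and $\langle\cup_i (\catI \cap \catJ_i)\rangle_s$ are semiprime, so each equals the intersection of the primes containing it; a prime $\catP$ contains the left-hand side iff $\catP \supseteq \catI$ or $\catP \supseteq \catJ_i$ for all $i$, and contains the right-hand side iff for each $i$ either $\catP \supseteq \catI$ or $\catP \supseteq \catJ_i$ --- and these two conditions are logically equivalent. This sidesteps entirely the ``genuinely computational point'' you flagged. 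Your route does go through (one applies \Cref{lem:semiprimeidealinclusion} with $\catM = \catJ$ and $\catN = \cup_i \catJ_i$, then uses that $\catJ \otimes \catK \otimes \catJ_i \subseteq \catJ \cap \catJ_i$ since both are ideals), but note that the identity $\catJ \cap \catI = \langle \catJ \otimes \catI\rangle_s$ you invoke is not \Cref{lem:semiprimeidealinclusion} itself but the formula from \cite[Lemma~6.4.2]{GS23} recalled just above. The paper's argument has the virtue of making the frame law an immediate consequence of semiprimality and primality alone, without touching the tensor product.
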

    \begin{proof}
        We first verify $T_s(\catK)$ is a frame; for this we must verify for all semiprime thick $\otimes$-ideals, $\catI, \catJ_i$ for $i \in I$, that \[\catI \cap \Big\langle\bigcup_i \catJ_i\Big\rangle_s = \Big\langle\bigcup_i (\catI \cap \catJ_i) \Big\rangle_s.\]  The left-hand side is identically the intersection of all prime thick $\otimes$-ideals containing either $\catI$ or containing all $\catJ_i$, and the right-hand side is the intersection of all prime thick $\otimes$-ideals containing, for each $i \in I$, either $\catI$ or $\catJ_i$. These two conditions on containment are equivalent, 
        hence equality holds. Now, $T_s(\catK)$ is spatial from \Cref{prop:equivalent_def_of_spatial} and \Cref{prop:prime=prime_wrt_semiprime}, as every element is an intersection of prime ideals, which are equivalently the meet-primes of $T_s(\catK)$. 
    \end{proof}
    \begin{remark}
        It is important to stress that the tensor product is doing some heavy lifting. 
        The fact that $ \catI \wedge \catJ = \langle \catI \otimes \catJ \rangle_s$ for $\catI, \catJ \in T_s(\catK)$ is crucial, since as a consequence, primality in the monoidal-triangular sense and meet-primality in the lattice-theoretic sense are identified. 
        
        This need not happen for the lattice $T(\catK)$; meet-prime elements of $T(\catK)$ in general do not correspond to the prime thick $\otimes$-ideals, even in the symmetric case. It is a straightforward exercise to show that primes are meet-prime, but the converse need not hold. 
        To give an example, let $k$ be a field and $\mathsf{grmod}_\bbN(k)$ denote the semisimple symmetric monoidal category of finitely generated $\bbN$-graded $k$-modules.
        The simple objects are exactly the twists $k(i)$, i.e.\ the graded modules with $k$ in degree $i$ and zero elsewhere.
        We have $k(i) \otimes k(j) \cong k(i + j)$, in particular $k:=k(0)$ is the tensor unit. 
        The thick $\otimes$-ideals of $D^b(\mathsf{grmod}_\bbN(k))$ form a countable descending chain $D^b(\mathsf{grmod}_\bbN(k)) = \langle k(0)\rangle \supset \langle k(1) \rangle \supset \cdots \supset \{0\}$. 
        There are two prime thick $\otimes$-ideals, $\langle k(1) \rangle$ and $\{0\}$, but because the thick $\otimes$-ideals form a chain, every thick $\otimes$-ideal is meet-prime in $T(D^b(\mathsf{grmod}_\bbN(k)))$. This also is an example of a non-rigid tensor-triangulated category which has significantly more thick $\otimes$-ideals than radical thick $\otimes$-ideals. 
    \end{remark}

    \begin{theorem}\label{cor:classification_for_spatial_frames}
        Let $\catK$ be a monoidal-triangulated category. We have a natural isomorphism of frames 
        \[T_s(\catK) \isoto \Omega(\Spc(\catK)^\nu),~\catI\mapsto \bigcup_{x\in \catI}\supp(x),
        \] where $\Spc(\catK)^\nu$ denotes the ``pseudo-Hochster dual'' of $(\Spc(\catK),\supp)$. That is, $\Spc(\catK)^\nu = \Spc(\catK)$ as a set with supports $\supp(x)$, $x\in \catK$, forming an \textbf{open} base for the topology. 
    \end{theorem}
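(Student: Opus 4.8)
The plan is to obtain this as an immediate consequence of Stone duality applied to the spatial frame $T_s(\catK)$, once its space of points is identified with $\Spc(\catK)^\nu$. By \Cref{cor:thicksarespatialframe}, $T_s(\catK)$ is a spatial frame, so \Cref{prop:spatial_support}\ref{item:ss1} (equivalently \Cref{thm:Stone}) provides a natural isomorphism $T_s(\catK)\isoto\Omega(\pt(T_s(\catK)))$ carrying $\catI$ to the open set $U_\catI=\{p\colon T_s(\catK)\to\mathbf 2\mid p(\catI)=1\}$. Thus it remains only to check two things: that $\pt(T_s(\catK))$ is, as a topological space, exactly $\Spc(\catK)^\nu$; and that under this identification the map $\catI\mapsto U_\catI$ becomes the displayed map $\catI\mapsto\bigcup_{x\in\catI}\supp(x)$.

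First I would identify the underlying sets. By \Cref{prop:frameptsaremeetprimes} the points of $T_s(\catK)$ correspond bijectively to its meet-prime elements, and by \Cref{prop:prime=prime_wrt_semiprime} (the equivalence \ref{item:p=psp1}$\Leftrightarrow$\ref{item:p=psp5}) these meet-primes are precisely the prime thick $\otimes$-ideals of $\catK$. So $\pt(T_s(\catK))=\Spc(\catK)$ as a set. Next the topology: the basic open corresponding to a principal semiprime ideal is $U_{\langle x\rangle_s}=\{\catP\text{ prime}\mid\langle x\rangle_s\not\subseteq\catP\}$, and since a prime ideal is in particular semiprime, $\langle x\rangle_s\subseteq\catP$ if and only if $x\in\catP$, whence $U_{\langle x\rangle_s}=\supp(x)$. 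For a general $\catI\in T_s(\catK)$ one has $\catI=\bigvee_{x\in\catI}\langle x\rangle_s$ in $T_s(\catK)$, and because each point is a frame morphism (preserving arbitrary joins) this gives $U_\catI=\bigcup_{x\in\catI}U_{\langle x\rangle_s}=\bigcup_{x\in\catI}\supp(x)$. In particular the family $\{\supp(x)\}_{x\in\catK}$ is an open base for $\pt(T_s(\catK))$, which is exactly the definition of $\Spc(\catK)^\nu$, and the Stone-duality isomorphism is precisely the map in the statement.

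The remaining point is naturality, which I would deduce from the naturality of the unit of the $\pt\dashv\Omega$ adjunction underlying Stone duality (restricted to the spatial frames on which it is an isomorphism). I expect the only step requiring any real care is the verification that $\{\supp(x)\}_{x\in\catK}$ forms a genuine \emph{base} for the point topology rather than a mere subbase: this is exactly where the frame structure on $T_s(\catK)$ is used, namely that every semiprime thick $\otimes$-ideal is the join of the principal semiprime ideals it contains, together with the fact that frame morphisms into $\mathbf 2$ convert such joins into unions. Everything else is transport of structure along the bijections already set up in \Cref{prop:frameptsaremeetprimes} and \Cref{prop:prime=prime_wrt_semiprime}.
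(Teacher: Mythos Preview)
Your proposal is correct and follows essentially the same route as the paper: identify $\pt(T_s(\catK))$ with $\Spc(\catK)^\nu$ via \Cref{prop:frameptsaremeetprimes} and \Cref{prop:prime=prime_wrt_semiprime}, then invoke Stone duality using spatiality from \Cref{cor:thicksarespatialframe}. Your write-up is in fact more explicit than the paper's, spelling out why the $\supp(x)$ form an open base and why the Stone isomorphism is the displayed map; the paper compresses these verifications into references to the earlier propositions.
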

    \begin{proof}
        \Cref{prop:frameptsaremeetprimes} asserts $\pt(T_s(\catK))$ identifies with the meet-primes of $T_s(\catK)$ with the supports forming an open base for the topology, and \Cref{prop:prime=prime_wrt_semiprime} asserts the meet-primes are precisely the prime thick $\otimes$-ideals of $\catK$. 
        Therefore, $\pt(T_s(\catK))=\Spc(\catK)^\nu$.
        The statement then reduces to Stone duality, as $T_s(\catK)$ is spatial by \Cref{cor:thicksarespatialframe}. Therefore, we have natural isomorphisms $T_s(\catK) \cong \Omega(\pt(T_s(\catK)))=\Omega(\Spc(\catK)^\nu)$, where the latter isomorphism is given by \Cref{prop:frameptsaremeetprimes}.
    \end{proof}

    \begin{remark}
        In \Cref{cor:classification_for_spatial_frames}, the space $\Spc(\catK)^\nu$ is not the genuine Hochster dual of $\Spc(\catK)$, since $\Spc(\catK)$ need not be a spectral space. 
        However, one can still construct $\Spc(\catK)$ from $\Spc(\catK)^\nu$ and vice versa, but only when one also has the support data $\supp$ for $\catK$; for one these form a basis of closed sets and for the other a basis of open sets - in this sense the datum $(\Spc(\catK),\supp)$ and $(\Spc(\catK)^\nu, \supp)$ are ``dual,'' hence the terminology ``pseudo-Hochster dual.'' On the other hand, if $\Spc(\catK)$ is spectral, then $\Spc(\catK)^\vee = \Spc(\catK)^\nu$; in this case we write $\Spc(\catK)^\vee$. 
        
        Finally, it is routine to see that via the lattice isomorphism of \Cref{cor:classification_for_spatial_frames}, compact objects of the lattice $T_s(\catK)$ correspond to quasi-compact opens of $\Spc(\catK)^\nu$. 
    \end{remark}

    We end this subsection by noting that not every sober space can arise in the above fashion. 
    
    \begin{proposition}\label{prop:quasicompactfullspace}
        Let $\catK$ be a monoidal-triangulated category. Then $\catK \in T_s(\catK)$ is a compact element. In particular, $\Spc(\catK)^\nu$ is quasi-compact.
    \end{proposition}
    \begin{proof}
        Let $S \subseteq T_s(\catK)$ be a collection of semiprime thick $\otimes$-ideals satisfying \[\left\langle \bigcup_{\catI \in S} \catI\right\rangle_s = \bigvee_{\catI \in S} \catI  = \catK.\] Note $\vee_{\catI \in S} \catI$ is equivalently the intersection of all prime thick $\otimes$-ideals containing every $\catI \in S$, and by assumption, this intersection is empty, so no prime contains every element of $S$. Now, if the (not necessarily semiprime) thick $\otimes$-ideal $\langle \cup S\rangle$ did not contain $\bbone\in \catK$, then prime lifting \cite[Lemma A.1.1]{NVY23} asserts that there exists a prime thick $\otimes$-ideal $\catP \in \Spc(\catK)$ satisfying $\langle \cup_{\catI \in S} \catI \rangle\subseteq \catP$, a contradiction. Therefore, $\bbone \in \langle \cup_{\catI \in S} \catI\rangle$. Because every element of $\langle \cup_{\catI \in S} \catI\rangle$ can be built in finitely many steps by tensoring on the left or right, shifting, and taking direct summands and cones, it follows that there exists a finite subset $S' \subseteq S$ such that $\bbone \in \langle \cup_{\catI \in S'} \catI\rangle$, and thus $\langle \cup_{\catI \in S'} \catI\rangle_s = \catK$, as desired. The final statement follows since, under Stone duality, compact elements of a spatial frame $\catF$ correspond bijectively to the quasi-compact opens of $\pt(\catF)$. 
    \end{proof}

    \begin{remark}
        By \cite[Theorem 6]{Hoc69} every spectral space is homeomorphic to the the spectrum of a commutative ring. 
        Given that $\Spc(D^{\on{perf}}(R)) \cong \Spec(R)$ for any commutative ring $R$, by \cite{Tho97, Bal05}, it follows that every spectral space arises as the Balmer spectrum of a tensor-triangulated category. 
        Equivalently, by Stone duality, every coherent frame arises as the frame of radical thick $\otimes$-ideals of a tensor-triangulated category $\catK$. 
        
        In contrast, by \Cref{prop:quasicompactfullspace}, not every sober topological space arises as the pseudo-Hochster dual of the Balmer spectrum of a monoidal-triangulated category, as sober topological spaces need not be quasi-compact. For instance, $\bbR^n$ is sober, since any Hausdorff topological space is sober. 
        Equivalently, by Stone duality, not every spatial frame arises as the frame of semiprime thick $\otimes$-ideals of a monoidal-triangulated category. It would be interesting to know what additional restrictions one needs to impose on a sober space or spatial frame to guarantee that it arises from a monoidal-triangulated category. 
    \end{remark}

    \subsection{On group actions on monoidal-triangulated categories}\label{sec:groupactions1}

    To show that in general $T_s(\catK)$ need not be a coherent frame, we briefly remark on group actions on monoidal-triangulated categories, the corresponding crossed product categories, and Huang-Vashaw's classification result \cite[Proposition 7.4]{HV25}. We note that such categories encompass stable module categories of the Hopf algebras considered by Benson--Witherspoon \cite{BW14} and Bergh--Plavnik--Witherspoon \cite{BPW24} which demonstrate particularly noncommutative behavior, such as the tensor product property not holding for $\Spc(\catK)$.
    
    Here, semiprime thick $\otimes$-ideals are parametrized by specialization-closed subsets, which can be a proper superset of Thomason subsets, as \cite[Example 7.7]{HV25} shows. We briefly review the relevant constructions and results. 

    \begin{definition}
        Let $G$ be a group acting on a monoidal-triangulated category $\catK$ via monoidal-triangulated autoequivalences. We set the \textit{crossed product category} of $\catK$ by $G$ to be the direct sum \[\catK \rtimes G := \bigoplus_{g \in G}\catK\] indexed by elements of $G$. For any element $x \in \catK$, the corresponding object in $\catK \rtimes G$ indexed by $g \in G$ is denoted $x \boxtimes g$. We set \[\Hom_{\catK \rtimes G}(x \boxtimes g, y \boxtimes h) := \begin{cases}
            \Hom_\catK(x,y) & g = h,\\ 0 & g \neq h.
        \end{cases}\]
        The monoidal product is given by the formula \[(x \boxtimes g) \otimes (y \boxtimes h) := \left(x\otimes g(y)\right) \boxtimes gh.\]
        We say a thick subcategory of $\catK$ is a $G$-ideal if it is a $G$-stable $\otimes$-ideal, and say $\catP$ is a $G$-prime $\otimes$-ideal if for all $G$-ideals $\catI, \catJ$, $\catI \otimes \catJ \subseteq \catP$ implies $\catI \subseteq \catP$ or $\catJ \subseteq \catP$. We warn the reader that a $G$-prime $\otimes$-ideal may not necessarily be a prime $\otimes$-ideal! 
    \end{definition}

    \begin{proposition}[{\cite[Proposition 5.4]{HV25}}]
        Let $G$ be a group acting on a monoidal-triangulated category $\catK$. 
        \begin{enumerate}
            \item The assignment  $\catI\mapsto\catI \rtimes G:=\operatorname{add}\{ i\boxtimes g \mid i\in\catI,g\in G\} \subseteq \catK \rtimes G$ is an order-preserving bijection between the $G$-ideals of $\catK$ and the thick $\otimes$-ideals of $\catK \rtimes G$. 
            \item The bijection restricts to a homeomorphism $G$-$\Spc(\catK) \cong \Spc(\catK \rtimes G)$.
        \end{enumerate}
    \end{proposition}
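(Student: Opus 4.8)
The plan is to produce an explicit inverse to $\catI \mapsto \catI \rtimes G$ and then to track how it interacts with products of ideals and with supports. The structural fact driving everything is that $\catK \rtimes G$ is ``$G$-graded'': $\Hom_{\catK\rtimes G}(x\boxtimes g, y\boxtimes h) = 0$ for $g \neq h$, so as a triangulated category $\catK \rtimes G \simeq \bigoplus_{g\in G}\catK$ with cones, shifts and direct summands computed componentwise, and an object lies in a thick subcategory if and only if each of its $G$-components does. Separately, since a monoidal autoequivalence fixes the unit, $\bbone \boxtimes g$ is $\otimes$-invertible with inverse $\bbone \boxtimes g^{-1}$; from $x\boxtimes g = (x\boxtimes e)\otimes(\bbone\boxtimes g)$ and $(x\boxtimes g)\otimes(\bbone\boxtimes g^{-1}) = x\boxtimes e$ we get, for any $\otimes$-ideal $\catJ$ of $\catK\rtimes G$, the equivalence $x\boxtimes g \in \catJ \iff x\boxtimes e\in\catJ$.

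For (1): if $\catI$ is a $G$-ideal then $\catI\rtimes G$, understood as the full subcategory on finite direct sums of objects $i\boxtimes g$ with $i\in\catI$, is thick by the componentwise description, and it is a $\otimes$-ideal since $(x\boxtimes g)\otimes(i\boxtimes h) = (x\otimes g(i))\boxtimes gh$ with $g(i)\in\catI$ by $G$-stability (and symmetrically on the right). Conversely, for a thick $\otimes$-ideal $\catJ$ of $\catK\rtimes G$ put $\catJ^\flat := \{x\in\catK \mid x\boxtimes e\in\catJ\}$; it is thick and a $\otimes$-ideal componentwise, and it is $G$-stable because $g(x)\boxtimes e = (\bbone\boxtimes g)\otimes(x\boxtimes e)\otimes(\bbone\boxtimes g^{-1})\in\catJ$ whenever $x\boxtimes e\in\catJ$. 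Both assignments preserve inclusions, and they are mutually inverse: $(\catI\rtimes G)^\flat = \catI$ is immediate, while $(\catJ^\flat)\rtimes G = \catJ$ follows from $x\boxtimes g\in\catJ\iff x\boxtimes e\in\catJ$ together with the componentwise membership criterion. Hence $\catI\mapsto\catI\rtimes G$ is an order isomorphism of (complete) lattices.

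For (2): under the isomorphism of (1), I would check that multiplication of ideals is respected. The thick $\otimes$-ideal of $\catK\rtimes G$ generated by the products $(i\boxtimes g)\otimes(j\boxtimes h) = (i\otimes g(j))\boxtimes gh$ has, using $G$-stability of $\catJ$ to let $g(j)$ range over all of $\catJ$, the generators $\{(i\otimes j')\boxtimes k : i\in\catI,\, j'\in\catJ,\, k\in G\}$, so by (1) (an order isomorphism of lattices preserving the operation of generating a sub-ideal) it equals $\langle\catI\otimes\catJ\rangle\rtimes G$, where $\langle\catI\otimes\catJ\rangle$ is again a $G$-ideal by monoidality of the action. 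Since (1) identifies the thick $\otimes$-ideals of $\catK\rtimes G$ with the $G$-ideals of $\catK$, quantifying over thick $\otimes$-ideals of $\catK\rtimes G$ is the same as quantifying over $G$-ideals of $\catK$, and the previous sentence then shows $\catP$ is $G$-prime if and only if $\catP\rtimes G$ is prime. Thus the bijection of (1) restricts to a bijection between $G$-$\Spc(\catK)$ and $\Spc(\catK\rtimes G)$ on underlying sets. For the topology (defined via supports of objects exactly as for $\Spc(\catK)$), one computes $\supp_{\catK\rtimes G}(x\boxtimes g) = \{\catP\rtimes G : x\notin\catP\}$, independent of $g$, and the support of a general object $\bigoplus_k x_k\boxtimes g_k$ is the finite union $\bigcup_k\supp_{\catK\rtimes G}(x_k\boxtimes e)$; under the bijection these are precisely the sets $\{\catP : x\notin\catP\}$ and their finite unions, the closed basis of $G$-$\Spc(\catK)$, so the bijection is a homeomorphism.

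The arguments are mostly bookkeeping on the grading; the one place the group action is genuinely used — rather than just the decomposition $\bigoplus_{g}\catK$ — is in the $G$-stability of $\catJ^\flat$ and in the identification of the product ideal, both of which rest on invertibility of $\bbone\boxtimes g$ and on absorbing the twist $g(-)$ via $G$-stability of the ideals in play. The only real subtlety I anticipate is keeping the left and right tensorings straight, since the monoidal product on $\catK\rtimes G$ is genuinely noncommutative.
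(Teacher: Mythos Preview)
The paper does not give its own proof of this proposition: it is stated with a citation to \cite[Proposition 5.4]{HV25} and used as a black box. There is therefore nothing in the present paper to compare your argument against.

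That said, your argument is correct and is essentially the standard one. The key structural inputs you identify --- the componentwise triangulated structure on $\catK\rtimes G=\bigoplus_{g}\catK$, invertibility of $\bbone\boxtimes g$, and the resulting equivalence $x\boxtimes g\in\catJ\iff x\boxtimes e\in\catJ$ for any $\otimes$-ideal $\catJ$ --- are exactly what makes the bijection work, and your inverse $\catJ^\flat=\{x\mid x\boxtimes e\in\catJ\}$ is the natural one. For part (2), your computation $\langle(\catI\rtimes G)\otimes(\catJ\rtimes G)\rangle=\langle\catI\otimes\catJ\rangle\rtimes G$ is right (the point that $\langle\catI\otimes\catJ\rangle$ is automatically $G$-stable because the $G$-action is by monoidal autoequivalences is worth making explicit, as you do), and it immediately yields the equivalence of $G$-primality of $\catP$ with primality of $\catP\rtimes G$. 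The identification of supports is routine. The one place to be slightly careful is in asserting that $g(j)$ ranges over all of $\catJ$ as $j$ does: this uses that $G$-stability of $\catJ$ gives $g(\catJ)=\catJ$ (not merely $g(\catJ)\subseteq\catJ$), which holds since $g^{-1}(\catJ)\subseteq\catJ$ as well.
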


    \begin{proposition}[{\cite[Proposition 7.4]{HV25}}]
        Let $G$ be a group acting on a half-rigid $\Spc$-Noetherian monoidal-triangulated category $\catK$. The $G$-Balmer support $\Phi_G$ on $G$-$\Spc(\catK)$ induces a bijection between $G$-ideals of $\catK$ and specialization-closed subsets of $G$-$\Spc(\catK)$. 
    \end{proposition}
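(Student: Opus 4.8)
The plan is to transport the statement across Huang--Vashaw's equivalence with the crossed product $\catK\rtimes G$ and then to invoke the classification already available for categories with Noetherian Balmer spectrum. By \cite[Proposition 5.4]{HV25} (recalled above), $\catI\mapsto\catI\rtimes G$ is an isomorphism of posets from the $G$-ideals of $\catK$ onto the thick $\otimes$-ideals $T(\catK\rtimes G)$, and under the induced homeomorphism $G$-$\Spc(\catK)\cong\Spc(\catK\rtimes G)$ it carries --- as one checks directly --- the $G$-Balmer support $\Phi_G$ to the ordinary support $\supp$ on $\Spc(\catK\rtimes G)$. Thus it suffices to classify $T(\catK\rtimes G)$ by the specialization-closed subsets of $\Spc(\catK\rtimes G)$ via $\catQ\mapsto\bigcup_{z\in\catQ}\supp(z)$. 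First I would record that $\catK\rtimes G$ is again half-rigid: a one-sided dual of $x\boxtimes g$ is obtained from a one-sided dual of $x$ in $\catK$ twisted by $g\inv$ (apply the monoidal autoequivalence $g\inv$ to the evaluation and coevaluation and use $gg\inv=e$), and a dual of a finite direct sum is the direct sum of the duals. Consequently $\catK\rtimes G$ has compact detection, and every thick $\otimes$-ideal $\catI$ of $\catK\rtimes G$ is semiprime: writing $s_z$ for a one-sided dual of $z$, the object $z$ is a direct summand of $z\otimes s_z\otimes z$, so $z\otimes(\catK\rtimes G)\otimes z\subseteq\catI$ forces $z\in\catI$. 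Hence $T(\catK\rtimes G)=T_s(\catK\rtimes G)$.

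The crucial remaining input, and the step I expect to be the main obstacle, is that $\Spc(\catK\rtimes G)=G$-$\Spc(\catK)$ is a Noetherian space. Here I would use the ``$G$-core'' assignment $\catP\mapsto\catP^{G}:=\bigcap_{g\in G}g(\catP)$, which carries prime $\otimes$-ideals of $\catK$ to $G$-prime $\otimes$-ideals: if $\catI\otimes\catJ\subseteq\catP^{G}$ with $\catI,\catJ$ $G$-stable and $\catI\not\subseteq g_0(\catP)$ for some $g_0$, then $\catJ\subseteq g_0(\catP)$, so $\catJ=h(\catJ)\subseteq hg_0(\catP)$ for every $h\in G$, i.e.\ $\catJ\subseteq\catP^{G}$. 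When $G$ is finite this defines a continuous surjection $\Spc(\catK)\twoheadrightarrow G$-$\Spc(\catK)$ --- continuity because $\Phi_G(x)$ pulls back to the \emph{finite} union $\bigcup_{g\in G}\supp(g\inv(x))$, and surjectivity by a standard lifting argument (using half-rigidity to write a $G$-prime as an intersection of primes and the Noetherian hypothesis to make that intersection finite, so that $G$-primeness forces it to equal one summand, necessarily of the form $\catP^{G}$) --- whence $G$-$\Spc(\catK)$, being a continuous image of the Noetherian space $\Spc(\catK)$, is Noetherian. For infinite $G$ there is no such continuous map (the $G$-core pulls a basic closed set back to an infinite union of closed sets), and one must instead verify the descending chain condition for closed subsets of $G$-$\Spc(\catK)$ directly, again reducing to $\Spc(\catK)$.

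To conclude: once $\Spc(\catK\rtimes G)$ is known to be Noetherian it has principal closure (as in every previously known instance of Balmer's classification; see \Cref{sec:coh_and_pp}), and combined with compact detection this places $\catK\rtimes G$ under the hypotheses of \Cref{thm:distlattice}, \Cref{cor:ncspcspectralspace} and \Cref{cor:balmerclassification}, so that $\Spc(\catK\rtimes G)$ is spectral and $T_s(\catK\rtimes G)\isoto\on{Th}(\Spc(\catK\rtimes G))$ via $\catQ\mapsto\bigcup_{z\in\catQ}\supp(z)$; alternatively one may invoke the Noetherian classification of \cite{Row24} directly. Finally, on a Noetherian spectral space the Thomason subsets coincide with the specialization-closed subsets, so combining this with the reduction of the first paragraph yields the asserted order isomorphism between the $G$-ideals of $\catK$ and the specialization-closed subsets of $G$-$\Spc(\catK)$, realized by $\catI\mapsto\bigcup_{x\in\catI}\Phi_G(x)$.
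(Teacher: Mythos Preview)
The paper does not supply its own proof of this proposition; it is simply cited from \cite{HV25}. So there is nothing to compare your argument to in the paper itself, but your proposed route has a genuine and fatal gap.

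Your strategy hinges on showing that $G\text{-}\Spc(\catK)\cong\Spc(\catK\rtimes G)$ is Noetherian, after which you wish to invoke the Noetherian classification (Rowe / \Cref{cor:balmerclassification}) and use that Thomason subsets coincide with specialization-closed subsets on a Noetherian spectral space. But this step is \emph{false} when $G$ is infinite: the paper explicitly notes in \Cref{obs:Ts_can_be_ugly} that $\Spc(\catK\rtimes G)$ ``need not be Noetherian again'', and \cite[Example~7.7]{HV25} is precisely such an example --- there $\Spc(\catK\rtimes G)$ is not even quasi-compact, hence not spectral, let alone Noetherian. In that example the Thomason subsets form a \emph{proper} subset of the specialization-closed subsets, so your final identification ``Thomason $=$ specialization-closed'' cannot go through. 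Your finite-$G$ sketch (continuous surjection from $\Spc(\catK)$) is fine, but the infinite case is not a matter of ``verifying the DCC directly'': the DCC simply fails.

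The actual mechanism behind the proposition (as the paper indicates in \Cref{obs:Ts_can_be_ugly}) is different and does \emph{not} pass through Noetherianity of the $G$-spectrum. One uses Noetherianity of $\Spc(\catK)$ to ensure that every closed subset of $G\text{-}\Spc(\catK)$ is already a union of supports $\Phi_G(x)$; hence every specialization-closed subset is a union of supports, i.e.\ an open of the pseudo-Hochster dual. The bijection then follows (in the language of this paper) from the spatiality statement \Cref{cor:classification_for_spatial_frames}, not from coherence or a Thomason-subset classification. In short: drop the attempt to prove $G\text{-}\Spc(\catK)$ Noetherian and instead argue that closed subsets of $G\text{-}\Spc(\catK)$ are unions of supports.
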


    \begin{observation}\label{obs:Ts_can_be_ugly}
        If $\Spc(\catK)$ is Noetherian, then every specialization-closed subset of $\Spc(\catK)$ is a union of supports (c.f.\ \cite[Remark 3.6]{Row24}). 
        When $\catK$ is half-rigid the same holds true for
        $\Spc(\catK \rtimes G)$; even though the latter need not be Noetherian again.
        Indeed, a specialization-closed subset of $\Spc(\catK \rtimes G)$ is a union of closed subsets, and by the proof of \cite[Proposition 7.4]{HV25} any closed subset is a union of supports.
        It follows that the specialization-closed subsets of $\Spc(\catK \rtimes G)$ correspond bijectively to open subsets of  $\Spc(\catK \rtimes G)^\nu$.
        Consequently, \cite[Proposition 7.4]{HV25} is a special case of \Cref{cor:classification_for_spatial_frames}. 
        
        Furthermore, in \cite[Example 7.7]{HV25} the authors give an example of a monoidal-triangulated category $\catK$ with non-spectral Balmer spectrum, as the spectrum itself is not quasi-compact; hence, an example of a category $\catK$ with $T_s(\catK)$ a non-coherent frame. 
        From what we show below, see also \Cref{rmk:examplesofclassification}, this category must have two principal semiprime ideals whose intersection is not principally generated; consequently, the sub-poset of compact objects of $T_s(\catK)$ does not form a sublattice. Hence neither $\Spc(\catK)$ nor $\Spc(\catK)^\nu$ are spectral topological spaces.  
    \end{observation}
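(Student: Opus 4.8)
The plan is to deduce each assertion of the observation from \Cref{cor:classification_for_spatial_frames}, \Cref{thm:thmB}, and the cited structural results for crossed products. First I would record the elementary fact that a subset of a topological space is specialization-closed exactly when it is a union of closed subsets. In a Noetherian Balmer spectrum every closed subset is a finite union of irreducible closed subsets, each the closure of a unique point, so together with \cite[Remark 3.6]{Row24} this gives that every specialization-closed subset of a Noetherian $\Spc(\catK)$ is a union of supports. When $\catK$ is half-rigid the $G$-Balmer spectrum $\Spc(\catK\rtimes G)$ need not be Noetherian, so instead I would quote the \emph{proof} of \cite[Proposition 7.4]{HV25}, which shows that every closed subset of $\Spc(\catK\rtimes G)$ is already a union of supports; since a specialization-closed subset is a union of closed subsets, the same conclusion then follows verbatim.

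Next, to see that \cite[Proposition 7.4]{HV25} is a special case of \Cref{cor:classification_for_spatial_frames}, I would note that the open subsets of $\Spc(\catK\rtimes G)^\nu$ are, by construction, exactly the unions of supports $\supp(x)$ (cf.\ the remark after \Cref{cor:classification_for_spatial_frames}); combined with the previous paragraph this identifies the specialization-closed subsets of $\Spc(\catK\rtimes G)$ with $\Omega(\Spc(\catK\rtimes G)^\nu)$. On the algebraic side, half-rigidity of $\catK$ passes to $\catK\rtimes G$ (the dual of $x\boxtimes g$ being $g\inv(x^\vee)\boxtimes g\inv$), so all thick $\otimes$-ideals of $\catK\rtimes G$ are semiprime, $T(\catK\rtimes G)=T_s(\catK\rtimes G)$, and by \cite[Proposition 5.4]{HV25} this agrees with the lattice of $G$-ideals of $\catK$. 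Composing the isomorphism of \Cref{cor:classification_for_spatial_frames} with these identifications exhibits the $G$-Balmer support $\Phi_G$ as the instance $\catI\mapsto\bigcup_{x\in\catI}\supp(x)$ of our classification, so the Huang--Vashaw bijection is recovered.

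For the example, the category $\catK'$ of \cite[Example 7.7]{HV25}, being half-rigid (it is a crossed product of a half-rigid category), has compact detection, whence $T_s(\catK')^c=t_s(\catK')$ by \Cref{thm:thmB}. Since $\Spc(\catK')$ fails to be quasi-compact it is not spectral, so condition (c) of \Cref{thm:thmB} fails, forcing $\catK'$ to lack principal closure; that is, $t_s(\catK')$ is not a sublattice of $T_s(\catK')$. As joins of principal semiprime ideals are principal ($\langle x\rangle_s\vee\langle y\rangle_s=\langle x\oplus y\rangle_s$), the obstruction must be a meet, so there exist $x,y\in\catK'$ with $\langle x\rangle_s\cap\langle y\rangle_s$ not principally generated. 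Hence $T_s(\catK')^c$ is not closed under meets, so $T_s(\catK')$ is not a coherent frame; by \Cref{cor:classification_for_spatial_frames} together with Stone duality for coherent frames, $\Spc(\catK')^\nu$ cannot be spectral either, while $\Spc(\catK')$ itself is not spectral by the quasi-compactness failure.

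The one genuinely delicate point is the half-rigid case of the first paragraph: establishing that closed subsets of $\Spc(\catK\rtimes G)$ are unions of supports with no Noetherianity hypothesis. This is not formal — it rests on the concrete description in the proof of \cite[Proposition 7.4]{HV25} of how the relevant closed sets are assembled from supports rather than merely from closures of points, and it is the one place where one must actually inspect that argument. Everything else is bookkeeping, assembling \Cref{cor:classification_for_spatial_frames}, \Cref{thm:thmB}, and the recalled propositions of Huang--Vashaw.
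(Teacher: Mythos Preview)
Your proposal is correct and follows essentially the same route as the paper's inline reasoning: specialization-closed subsets are unions of closed sets, the proof of \cite[Proposition 7.4]{HV25} supplies that closed sets are unions of supports in the crossed-product case, and then \Cref{cor:classification_for_spatial_frames} together with \Cref{cor:ncspcspectralspace} handles both the identification with $\Omega(\Spc(\catK\rtimes G)^\nu)$ and the analysis of \cite[Example 7.7]{HV25}. Your explicit justification that half-rigidity passes to $\catK\rtimes G$ (so that $T(\catK\rtimes G)=T_s(\catK\rtimes G)$ and the $G$-ideal bijection lands in the semiprime lattice) is a detail the paper leaves implicit but which is indeed needed to make ``\cite[Proposition 7.4]{HV25} is a special case of \Cref{cor:classification_for_spatial_frames}'' literally true.
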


    Unfortunately, it does not seem easy to describe the sets corresponding to the open subsets of $\Spc(\catK \rtimes G)^\nu$ when $\catK$ is no longer Noetherian, since the arguments in \cite{HV25} rely heavily on Noetherianity. 
    
    It would be interesting to know if there are conditions characterizing when open subsets of $\Spc(\catK)^\nu$ correspond exactly to the specialization-closed subsets of $\Spc(\catK)$ (and therefore, when the bijection given in \cite[Proposition 7.4]{HV25} holds). When $\Spc(\catK)$ is spectral with supports corresponding exactly to the complements of quasi-compact open subsets, this holds if and only if $\Spc(\catK)$ is Noetherian, but if $\Spc(\catK)$ is non-spectral, it is evident that other scenarios arise.

    We will revisit crossed product categories again in the sequel, after more terminology is introduced. Under mild hypotheses, we will see that some crossed product categories still remain well-behaved, see \Cref{sec:crossedproduct}.

    \section{Coherence and the principal part}\label{sec:coh_and_pp}
    
    We saw in \Cref{cor:classification_for_spatial_frames} that unlike in the symmetric case, is is \emph{not} the Thomason subsets of the Balmer spectrum that classify the semiprime ideals. 
    The reason for this is exhibited in \Cref{obs:Ts_can_be_ugly}: $T_s(\catK)$ can in general fail to be a coherent frame.
    However, once it is a coherent frame (and the compact elements are what they should be), one recovers the `usual' classification (c.f. \cite[Theorem 4.10]{Bal05}).
    
    \begin{theorem}\label{thm:balmerclassification_without_conditions}
        Let $\catK$ be a monoidal-triangulated category. Then $\Spc(\catK)^\nu$ is a spectral space if and only if $T_s(\catK)$ is coherent.
        Furthermore, if the supports $\supp(x)$ are exactly the quasi-compact open subsets of $\Spc(\catK)^\nu$, we obtain an order-preserving bijection \[T_s(\catK) \cong \on{Th}(\Spc(\catK))\] between the semiprime thick $\otimes$-ideals of $\catK$ and the Thomason subsets of $\Spc(\catK)$.         
    \end{theorem}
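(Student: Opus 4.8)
The proof runs entirely through the isomorphism $T_s(\catK)\cong\Omega(\Spc(\catK)^\nu)$ of \Cref{cor:classification_for_spatial_frames} together with the identification $\Spc(\catK)^\nu=\pt(T_s(\catK))$ established in its proof. For the ``if and only if'' I would simply invoke the restriction of Stone duality (\Cref{thm:Stone}) to the antiequivalence between coherent frames and spectral spaces: if $\Spc(\catK)^\nu$ is spectral then $\Omega(\Spc(\catK)^\nu)$, hence $T_s(\catK)$, is a coherent frame; conversely if $T_s(\catK)$ is coherent then its space of points $\pt(T_s(\catK))=\Spc(\catK)^\nu$ is spectral. There is nothing more to do here.

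For the ``furthermore'' I would first record, via \Cref{prop:frameptsaremeetprimes}, that under the isomorphism of \Cref{cor:classification_for_spatial_frames} the principal semiprime ideal $\langle x\rangle_s$ corresponds to the open set $\supp(x)\subseteq\Spc(\catK)^\nu$, so that the compact elements of $T_s(\catK)$ correspond exactly to the quasi-compact opens of $\Spc(\catK)^\nu$. The hypothesis says that $\{\supp(x)\mid x\in\catK\}$ is at once a base of the topology and the full collection of quasi-compact opens; one checks this collection is closed under finite intersections, so that (using sobriety, automatic for a space of points, and quasi-compactness, which holds since the whole space $\Spc(\catK)^\nu$ is $\supp(\bbone)$) $\Spc(\catK)^\nu$ is spectral. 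By the first part $T_s(\catK)$ is then coherent, with $T_s(\catK)^c=t_s(\catK)$.

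The remaining point is to see that $\Spc(\catK)$ is genuinely the Hochster dual of $\Spc(\catK)^\nu$. By definition $(\Spc(\catK)^\nu)^\vee$ has the closed subsets of $\Spc(\catK)^\nu$ with quasi-compact complement---i.e.\ the sets $\supp(x)^c$---as a base of open sets, equivalently the $\supp(x)$ as a base of closed sets, on the same underlying set of prime $\otimes$-ideals; this is precisely the defining topology of $\Spc(\catK)$. Hence $\Spc(\catK)=(\Spc(\catK)^\nu)^\vee$, so $\Spc(\catK)$ is spectral and, Hochster duality being an involution on spectral spaces, $\Spc(\catK)^\vee=\Spc(\catK)^\nu$. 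Therefore the Thomason subsets of $\Spc(\catK)$, being by definition the open subsets of $\Spc(\catK)^\vee$, coincide with the open subsets of $\Spc(\catK)^\nu$; composing with \Cref{cor:classification_for_spatial_frames} gives the asserted order-preserving bijection $T_s(\catK)\isoto\on{Th}(\Spc(\catK))$, $\catI\mapsto\bigcup_{x\in\catI}\supp(x)$, and one notes in passing that each such union is a union of complements of quasi-compact opens of $\Spc(\catK)$, hence a Thomason subset.

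I expect the first assertion to be essentially free, a transcription of Stone duality. The main obstacle is the bookkeeping in the ``furthermore'': keeping straight the two ``pseudo-Hochster dual'' topologies $\Spc(\catK)$ and $\Spc(\catK)^\nu$ on the set of prime $\otimes$-ideals, verifying that the hypothesis on supports promotes their pseudo-duality to a genuine Hochster duality (equivalently, that $\Spc(\catK)^\nu$ is spectral), and correctly matching $T_s(\catK)^c$ with $t_s(\catK)=\{\langle x\rangle_s\mid x\in\catK\}$. Once those identifications are secured, the conclusion is purely formal.
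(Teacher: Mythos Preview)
Your proof takes essentially the same approach as the paper's: Stone duality (via $\Spc(\catK)^\nu=\pt(T_s(\catK))$) for the first assertion, and for the ``furthermore'' the observation that the hypothesis makes $\Spc(\catK)^\nu$ the genuine Hochster dual of $\Spc(\catK)$, whence Thomason subsets of the latter coincide with opens of the former. You are considerably more explicit than the paper's two-sentence proof---in particular spelling out why $\Spc(\catK)=(\Spc(\catK)^\nu)^\vee$ by comparing bases---but the underlying logic is identical.
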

    \begin{proof}
        As $\Spc(\catK)^\vee$ is precisely $\pt(T_s(\catK))$ the first assertion follows from Stone duality.
        For the second, if the supports are exactly the quasi-compact open subsets, then $\Spc(\catK)^\vee$ is the genuine Hochster dual of $\Spc(\catK)$. The open subsets of the Hochster dual are exactly the Thomason subsets. 
    \end{proof}

    Thus in order to obtain the more common classification in terms of Thomason subsets of the Balmer spectrum and support, we need to know when $T_s(\catK)$ is a coherent frame and when the supports exactly correspond to quasi-compact opens.
    This latter condition, again by Stone duality, translates to when the finitely generated semiprime ideals are exactly the compacts elements in $T_s(\catK)$. Therefore, we aim to answer the following question:

    \begin{question}
        When is $T_s(\catK)$ a coherent frame, with its sublattice of compact elements corresponding to the principally generated semiprime thick $\otimes$-ideals $\langle x\rangle_s$? 
    \end{question}

    \subsection{The principal part}

    To this end we need to understand the subposet of finitely generated, equivalently principal, semiprime ideals, which we call the principal part.

    \begin{definition}\label{def:objlattice}
        We let $t_s(\catK)$ denote the subposet of $T_s(\catK)$ consisting of principal semiprime $\otimes$-ideals.
        We call $t_s(\catK)$ the \textit{principal part} of $T_s(\catK)$. 
    \end{definition}

    In general this is only a join sublattice of $T_s(\catK)$ (see \Cref{lem:distributivityofprincipals} below).
    An alternative way of thinking about this join sublattice is as follows. 
        
    \begin{remark}[c.f. {\cite[Section 6]{Kra24}}]
        For $x, y\in\catK$, we write $x \sim y$ if and only if $\langle x \rangle_s  = \langle y \rangle_s $. This is an equivalence relation, and the set of equivalence relations $L_s(\catK) := \on{Ob}(\catK)/{\sim}$ obtains a partial order induced via inclusions of the corresponding semiprime ideals. We call $L_s(\catK)$ the \textit{(semiprime) object poset} of $\catK$. 
    \end{remark}

    \begin{remark}\hfill    
        \begin{enumerate}
            \item As already alluded to, $L_s(\catK)\cong t_s(\catK)$ by $[x] \mapsto \langle x\rangle_s$.
            \item One can play a similar game by looking at the principal parts of the lattices $T(\catK)$ or $T_r(\catK)$ and define principal parts $t(\catK)$ or $t_r(\catK)$.
            Alternatively, one can think of these in terms of objects up to equivalence relation using $\langle-\rangle$ and $\langle-\rangle_r$ instead of $\langle-\rangle_s$ to define $L(\catK)$ and $L_r(\catK)$ as was done in \cite{Kra24, Mil25}. Since our main goal is the classification of semiprime ideals, we will, for the most part, not consider non-semiprime ideals. 
            \item For radical thick $\otimes$-ideals, the situation behaves as it does in the symmetric case. Krause shows that for radical thick $\otimes$-ideals, $L_r(\catK)$ is exactly the compact part of $T_r(\catK)$, hence $T_r(\catK)$ is a coherent frame \cite[Proposition 14]{Kra24}. Miller concludes that $\Spc\cp(\catK) = \Spc(L_r(\catK))^\vee$, hence one has Balmer classification of radical thick $\otimes$-ideals in terms of $\Spc\cp(\catK)$, and $\Spc\cp(\catK)$ is spectral. However, $\Spc\cp(\catK)$ can be empty, see \cite[Theorem 8.5]{Mil25} - this corresponds to $T_r(\catK)$ having one element, $\catK$. 
        \end{enumerate}
    \end{remark}

    In order to understand the principal part, we need to understand the joins and meets of principal semiprime $\otimes$-ideals.
    
    \begin{lemma}\label{lem:distributivityofprincipals}
        Let $x, y \in \catK$. Then in $T_{s}(\catK)$, we have an equality \[\langle x \oplus y \rangle_s  = \langle x\rangle_s \vee \langle y \rangle_s .\]
        Moreover, we have an inclusion \[\langle (x \otimes y) \oplus (y \otimes x)\rangle_s  \leq  \langle x\rangle_s \wedge \langle y \rangle_s ,\] and equality holds for all elements in $\catK$ if and only if all semiprime ideals of $\catK$ are radical if and only if all primes of $\catK$ are completely prime. 
    \end{lemma}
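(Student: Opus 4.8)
The plan is to dispatch the two easy assertions first, then organize the three-way equivalence as a short cycle driven by a single tensor-product formula for radical ideals.

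First I would prove the equality $\langle x\oplus y\rangle_s=\langle x\rangle_s\vee\langle y\rangle_s$: both $x$ and $y$ are direct summands of $x\oplus y$, so $\langle x\rangle_s,\langle y\rangle_s\subseteq\langle x\oplus y\rangle_s$ and hence $\langle x\rangle_s\vee\langle y\rangle_s\subseteq\langle x\oplus y\rangle_s$; conversely $\langle x\rangle_s\vee\langle y\rangle_s$ contains $x$ and $y$ and, being triangulated, contains $x\oplus y$, giving the reverse inclusion. For the inclusion $\langle(x\otimes y)\oplus(y\otimes x)\rangle_s\subseteq\langle x\rangle_s\wedge\langle y\rangle_s$, note that $x\otimes y$ and $y\otimes x$ both lie in $\langle x\rangle_s\cap\langle y\rangle_s$ (it is a $\otimes$-ideal containing $x$ and $y$), hence so does their direct sum; since $\langle x\rangle_s\cap\langle y\rangle_s$ is semiprime this forces $\langle(x\otimes y)\oplus(y\otimes x)\rangle_s\subseteq\langle x\rangle_s\cap\langle y\rangle_s=\langle x\rangle_s\wedge\langle y\rangle_s$ (using $\wedge=\cap$ in $T_s(\catK)$, \cite[Lemma 6.4.2]{GS23}).

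For the equivalences write (A) for ``equality holds for all $x,y$'', (B) for ``every semiprime $\otimes$-ideal is radical'', and (C) for ``every prime $\otimes$-ideal is completely prime''. The key tool is the identity $\langle x\rangle_r\cap\langle y\rangle_r=\langle x\otimes y\rangle_r$, valid unconditionally: by \cite[Theorem 6.4]{Mil25}, $\langle z\rangle_r$ is the intersection of all completely prime $\otimes$-ideals containing $z$, and a completely prime $\catQ$ contains $x\otimes y$ iff it contains $x$ or $y$, so both sides are the intersection over the same family of $\catQ$'s. I would then run the cycle (A)$\Rightarrow$(B)$\Rightarrow$(C)$\Rightarrow$(A). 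For (A)$\Rightarrow$(B): taking $y=x$ in (A) and using $\langle a\oplus a\rangle_s=\langle a\rangle_s$ gives $\langle x^{\otimes 2}\rangle_s=\langle x\rangle_s$; iterating (replace $x$ by $x^{\otimes 2}$) and using $x^{\otimes m}\in\langle x^{\otimes n}\rangle$ for $m\ge n$ yields $\langle x^{\otimes n}\rangle_s=\langle x\rangle_s$ for all $n\ge 1$, so any semiprime $\catI$ with $x^{\otimes n}\in\catI$ contains $x$, i.e.\ $\catI$ is radical. For (B)$\Rightarrow$(C): a prime $\catP$ is semiprime, hence radical by (B), so if $x\otimes y\in\catP$ then $\langle x\rangle_r\cap\langle y\rangle_r=\langle x\otimes y\rangle_r\subseteq\catP$, and meet-primality of $\catP$ in $T(\catK)$ (\Cref{prop:prime=prime_wrt_semiprime}) forces $x\in\catP$ or $y\in\catP$. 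For (C)$\Rightarrow$(A): (C) makes the primes and completely primes containing a given $z$ coincide, so $\langle z\rangle_s=\langle z\rangle_r$ for all $z$; then $\langle x\rangle_s\wedge\langle y\rangle_s=\langle x\rangle_r\cap\langle y\rangle_r=\langle x\otimes y\rangle_r=\langle x\otimes y\rangle_s\subseteq\langle(x\otimes y)\oplus(y\otimes x)\rangle_s$, which with the inclusion already proven gives equality.

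The main obstacle is the implication (B)$\Rightarrow$(C): one must resist arguing that a prime radical ideal, being an intersection of completely prime ideals, is itself completely prime — that is false for infinite intersections. The fix is to feed the two \emph{principal} radical ideals $\langle x\rangle_r,\langle y\rangle_r$ into the meet-primality of $\catP$, which reduces the matter to a single finite meet; this is precisely where the unconditional formula $\langle x\rangle_r\cap\langle y\rangle_r=\langle x\otimes y\rangle_r$ does the real work, so the one point that genuinely needs care is verifying that formula cleanly (including the degenerate case where $\catK$ has no completely prime ideals, handled uniformly by the empty-intersection convention). Everything else is routine bookkeeping with the hull operators $\langle-\rangle$, $\langle-\rangle_s$, and $\langle-\rangle_r$.
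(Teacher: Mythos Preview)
Your proof is correct. The first two assertions are handled exactly as in the paper. For the three-way equivalence, however, you organize things differently: the paper argues (A)$\Leftrightarrow$(B) directly and then closes with (A)$\Rightarrow$(C) (via $t_s(\catK)=t_r(\catK)$ and an appeal to \cite[Proposition~6.2]{Mil25}) and (C)$\Rightarrow$(B), whereas you run the cycle (A)$\Rightarrow$(B)$\Rightarrow$(C)$\Rightarrow$(A) driven by the single unconditional identity $\langle x\rangle_r\cap\langle y\rangle_r=\langle x\otimes y\rangle_r$. Your (B)$\Rightarrow$(C) step via meet-primality in $T(\catK)$ is a genuinely different maneuver and nicely avoids the external citation; conversely the paper's (B)$\Rightarrow$(A) step ``$z\in\langle z\otimes z\rangle_s\subseteq\langle x\otimes y\rangle_s$'' is more compressed---to justify the second inclusion one effectively needs $\langle x\rangle_s=\langle x\rangle_r$ under (B) together with the very identity you isolate. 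So your approach trades one outside reference (\cite[Proposition~6.2]{Mil25}) for another (\cite[Theorem~6.4]{Mil25}) but makes the engine of the argument more transparent.
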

    \begin{proof}
        First, since $x, y \in \langle x \oplus y \rangle_s $ by thickness, $\langle x \rangle_s  \vee \langle y \rangle_s  \subseteq \langle x \oplus y \rangle_s $. On the other hand, $x \oplus y \in \langle x \rangle_s  \vee \langle y \rangle_s $, hence $\langle x \oplus y\rangle_s  \subseteq \langle x \rangle_s  \vee \langle y \rangle_s $, as desired. 

        For the latter statement, the inequality is obvious. If equality holds, then in particular, $\langle x \otimes x \rangle_s  = \langle x \rangle_s  \wedge \langle x \rangle_s  = \langle x \rangle_s $ implying all semiprime ideals are radical. Conversely suppose all semiprime ideals are radical. Then if $z \in \langle x\rangle_s  \wedge \langle y \rangle_s $, then $z \in \langle z\otimes z\rangle_s  \subseteq \langle x \otimes y\rangle_s $, and similarly for $y \otimes x$. Finally, for the last statement, if equality holds, then $t_s(\catK) = t_r(\catK)$, therefore $T_r(\catK) = T_s(\catK)$ and $\Spc(\catK) = \Spc\cp(\catK)$ by \cite[Proposition 6.2]{Mil25}. Conversely if all primes are completely prime, then all semiprimes are radical since every semiprime is an intersection of radical primes. 
    \end{proof}

    \begin{remark}
        For radical thick $\otimes$-ideals, joins and meets mirror the symmetric case, $ \langle x\rangle_r \vee \langle y \rangle_r = \langle x \oplus y \rangle_r$ and  $\langle x\rangle_r \wedge \langle y \rangle_r = \langle x \otimes y\rangle_r$, as suggested by the previous proposition. The proof is left as an easy exercise.
    \end{remark}

    \subsection{Principal closure}

    Thus $t_s(\catK)$ has joins but not necessarily meets, since the intersection of two principal (or equivalently, finitely generated) semiprimes may not again be principal (as noted after \cite[Example 4.4.4]{GS23} for the non-tensor setting). Indeed, 
    \[\langle x\rangle_s  \cap \langle y \rangle_s  =\langle x \otimes \catK \otimes y \rangle_s \] in the poset of semiprime ideals, and in general the right hand side may not be finitely generated. Therefore we consider the following class of monoidal-triangulated categories for which this property does hold.

    \begin{definition}
        We say $\catK$ is \textit{principally closed} if for all $x, y \in \catK$, $\langle x\rangle_s  \cap \langle y \rangle_s  = \langle z \rangle_s $ for some $z \in \catK$. 
    \end{definition}

    This condition holds in the cases referenced in the intro where one has Balmer classification of thick $\otimes$-ideals. 
    
    \begin{example}\label{ex:princ_clos}
    \hfill
        \begin{enumerate}
            \item If $\catK$ is a tensor-triangulated category, or more generally the tensor product is (potentially non-functorially) commutative, then the primes equal the completely primes and $\catK$ is principally closed by \Cref{lem:distributivityofprincipals}.
            \item More generally, if every semiprime ideal of $\catK$ is radical, then $\catK$ is principally closed also by \Cref{lem:distributivityofprincipals}.
            \item If $\catK$ has a thick generator $g$ then $\catK$ is principally closed. Indeed, in this case we have \[\langle x\rangle_s  \cap \langle y \rangle_s  = \langle \langle x\rangle_s  \otimes \langle y \rangle_s \rangle_s  = \langle x \otimes \catK \otimes y\rangle_s  = \langle x \otimes g\otimes y\rangle_s .\] 
            \item\label{item:princ_clos_pt_Noeth} If $\Spc(\catK)^\vee$ is Noetherian, then $\catK$ is principally closed. Indeed, for any $x,y\in\catK$ the set \[\{\supp(\langle x \otimes S\otimes y\rangle_s) \mid S \subseteq \Ob(\catK) \text{ finite}\}\] of quasi-compact open sets of $\Spc(\catK)$ satisfies the ascending chain condition. 
        \end{enumerate}
        Additionally, if $\Spc(\catK)$ is Noetherian, then $\catK$ is principally closed; this follows from \Cref{cor:ncspcspectralspace} in the sequel and results of Rowe \cite{Row24}.
    \end{example}

    If $\catK$ is principally closed, then $t_s(\catK)$ is well-behaved: it is distributive. 

    \begin{lemma}\label{lem:distributivitygivenprincipalclosure}
        Let $\catK$ be principally closed. Then $t_s (\catK)$ is a bounded distributive lattice.
    \end{lemma}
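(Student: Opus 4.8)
The plan is to show that, under principal closure, $t_s(\catK)$ is in fact a \emph{sublattice} of $T_s(\catK)$, and then to inherit distributivity and boundedness from the ambient frame.

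First I would record that $t_s(\catK)$ is closed under finite joins computed inside $T_s(\catK)$: this is precisely the first equality $\langle x\oplus y\rangle_s=\langle x\rangle_s\vee\langle y\rangle_s$ of \Cref{lem:distributivityofprincipals}, so the join of two principal semiprime $\otimes$-ideals is again principal. Next, closure under finite meets is exactly the hypothesis of principal closure: recall that in $T_s(\catK)$ the meet is intersection, $\langle x\rangle_s\wedge\langle y\rangle_s=\langle x\rangle_s\cap\langle y\rangle_s=\langle x\otimes\catK\otimes y\rangle_s$, and principal closure asserts that this common value equals $\langle z\rangle_s$ for some $z\in\catK$; hence the meet of two elements of $t_s(\catK)$, computed in $T_s(\catK)$, again lies in $t_s(\catK)$. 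Thus $t_s(\catK)$ is a sublattice of $T_s(\catK)$, with the same operations.

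Since $T_s(\catK)$ is a spatial frame by \Cref{cor:thicksarespatialframe}, it is in particular distributive, and distributivity of a lattice passes to every sublattice; therefore $t_s(\catK)$ is distributive. For boundedness, the bottom element of $T_s(\catK)$ is $\langle 0\rangle_s$ (the intersection of all prime thick $\otimes$-ideals), which is principal by construction, while the top element is $\catK$ itself, and $\catK=\langle\bbone\rangle_s$ since $x\cong\bbone\otimes x$ for every $x\in\catK$ forces $\langle\bbone\rangle=\catK$ and hence $\langle\bbone\rangle_s=\catK$. So both $0$ and $1$ of $T_s(\catK)$ lie in $t_s(\catK)$, and $t_s(\catK)$ is bounded, completing the argument.

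There is no substantive obstacle here; the only point that genuinely requires care is checking that the lattice operations computed within $t_s(\catK)$ agree with those of $T_s(\catK)$ — this is what makes ``sublattice'' the correct notion and lets us import distributivity — and it is handled by combining \Cref{lem:distributivityofprincipals} with the principal closure hypothesis.
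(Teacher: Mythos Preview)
Your proof is correct and follows essentially the same approach as the paper: show that $t_s(\catK)$ is a bounded sublattice of the spatial frame $T_s(\catK)$ and then inherit distributivity. The paper compresses this into a single sentence, whereas you spell out explicitly the closure under joins (via \Cref{lem:distributivityofprincipals}), meets (via principal closure), and the identification of top and bottom, but the underlying argument is identical.
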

    \begin{proof}
        If $\catK$ is principally closed, $t_s(\catK)$ is a bounded sublattice of the (spatial) frame $T_s(\catK)$. 
        Consequently it inherits distributivity from the latter.
    \end{proof}

    This fact does not hold in general, for the analogous poset of principal (not necessarily semiprime) $\otimes$-ideals $t(\catK)$, see \cite[Example 7.1.4]{GS23}.

    \subsection{Compact elements in the lattice of semiprimes}

    Our desire, motivated by \Cref{prop:dist_lattice=coherent_frame}, is that $t_s(\catK)$ is the sublattice of compact objects of $T_s(\catK)$. 
    We determine exactly when this occurs.

    \begin{definition}
        Let $x \in \catK$. We say \textit{$x$ has compact detection} in $\catK$ if and only if there exists an element $s_x \in \catK$ for which the following holds: given any semiprime thick $\otimes$-ideal $\catI \subseteq \catK$, we have $x \in \catI$ if and only if $x \otimes s_x \otimes x \subseteq \catI$; equivalently, $\langle x\rangle_s = \langle x \otimes s_x \otimes x\rangle_s$.
        We say the element $s_x$ \textit{witnesses compact detection} for $x$. 
        We say \textit{$\catK$ has compact detection} if all elements of $\catK$ have compact detection. 
    \end{definition}
    \begin{remark}
        Of course, instead of requiring a single element in the definition above it suffices to ask there exists a finite subset of elements $S_x$ such that given any semiprime thick $\otimes$-ideal $\catI \subseteq \catK$, we have $x \in \catI$ if and only if $x \otimes S_x \otimes x \subseteq \catI$.
    \end{remark}

    Compact detection is a very mild assumption, as the following examples demonstrate.
    
    \begin{example}\label{ex:comp_det}
        If $\catK$ is rigid (or half-rigid), then $\catK$ has compact detection by choosing $s_x = x^*$ or ${}^*x$ for every element $x \in \catK$. If every semiprime thick $\otimes$-ideal of $\catK$ is radical (e.g., if $\catK$ is symmetric), then $\catK$ has compact detection by choosing $s_x = 1$ for every element $x \in \catK.$ 

        Additionally, if $\Spc(\catK)$ is Noetherian, then $\catK$ has compact detection by \Cref{cor:ncspcspectralspace} below and results of Rowe \cite{Row24}.
    \end{example}
     
    It turns out compact detection is the exact condition one requires for our wants to be satisfied.
    
    \begin{proposition}\label{prop:characterizingcompactelts}
        Let $\catK$ be a monoidal-triangulated category. 
        The compact elements of the lattice of semiprime thick $\otimes$-ideals $T_s(\catK)$ are precisely the principal ideals $\langle x\rangle_s $ if and only if $\catK$ has compact detection. 
    \end{proposition}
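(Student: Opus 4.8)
The plan is to prove both implications, and the first step I would take is to observe that \emph{every compact element of $T_s(\catK)$ is principal, unconditionally}. Indeed, for any $\catI\in T_s(\catK)$ the family $\{\langle y\rangle_s \mid y\in\catI\}$ is directed: it is closed under finite joins because $\langle y\rangle_s\vee\langle y'\rangle_s=\langle y\oplus y'\rangle_s$ by \Cref{lem:distributivityofprincipals} and $y\oplus y'\in\catI$, so this join still lies below $\catI$ (as $\catI$ is semiprime); and its join is $\langle\catI\rangle_s=\catI$. Hence if $\catI$ is compact it equals $\langle y_0\rangle_s$ for a single $y_0\in\catI$. Consequently the proposition reduces to the assertion: \emph{every principal $\langle x\rangle_s$ is compact $\iff$ $\catK$ has compact detection.}

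For the direction ``$\Rightarrow$'', fix $x\in\catK$ and use the identity $\langle x\rangle_s=\langle x\otimes\catK\otimes x\rangle_s$ (the $y=x$ case of $\langle x\rangle_s\cap\langle y\rangle_s=\langle x\otimes\catK\otimes y\rangle_s$). Writing this as the directed join $\langle x\rangle_s=\bigvee_{S\subseteq\Ob\catK\text{ finite}}\langle x\otimes S\otimes x\rangle_s$, compactness of the principal ideal $\langle x\rangle_s$ furnishes a finite $S$ with $\langle x\rangle_s=\langle x\otimes S\otimes x\rangle_s$; setting $s_x:=\bigoplus_{z\in S}z$ and using additivity of $\otimes$ to identify $x\otimes s_x\otimes x$ with $\bigoplus_{z\in S}(x\otimes z\otimes x)$ gives $\langle x\rangle_s=\langle x\otimes s_x\otimes x\rangle_s$, which is exactly compact detection for $x$.

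The substantive direction is ``$\Leftarrow$''. Fix $x$ and suppose $\langle x\rangle_s\leq\bigvee_{i\in I}\catJ_i$. Let $\catA:=\langle\bigcup_i\catJ_i\rangle$ be the \emph{plain} thick $\otimes$-ideal generated by the union, so $\catA=\bigcup_{I'\subseteq I\text{ finite}}\langle\bigcup_{i\in I'}\catJ_i\rangle$ is a directed union, while $\bigvee_i\catJ_i=\langle\catA\rangle_s$; thus $x\in\langle\catA\rangle_s$. Now build the canonical ``$m$-sequence'' $y_0:=x$, $y_{n+1}:=y_n\otimes s_{y_n}\otimes y_n$ using compact-detection witnesses $s_{y_n}$ for each $y_n$; compact detection gives $\langle y_{n+1}\rangle_s=\langle y_n\rangle_s$, hence $\langle y_n\rangle_s=\langle x\rangle_s$ for all $n$. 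The heart of the matter is the claim that some $y_n$ lies in $\catA$: granting it, $y_n\in\langle\bigcup_{i\in I'}\catJ_i\rangle$ for a finite $I'$, so $\langle x\rangle_s=\langle y_n\rangle_s\subseteq\langle\bigcup_{i\in I'}\catJ_i\rangle_s=\bigvee_{i\in I'}\catJ_i$, proving $\langle x\rangle_s$ compact.

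To prove the claim I would argue by contradiction. If no $y_n$ lies in $\catA$, apply Zorn's lemma to the nonempty poset of thick $\otimes$-ideals containing $\catA$ and avoiding every $y_n$, getting a maximal such $\catP$. The key step, which I expect to be the main obstacle, is to show $\catP$ is semiprime: if not, there is $z\notin\catP$ with $z\otimes\catK\otimes z\subseteq\catP$, so $\langle\catP,z\rangle\supsetneq\catP$ cannot avoid all $y_n$, whence $y_m\in\langle\catP,z\rangle$ for some $m$ and thus also $y_{m+1}\in\langle y_m\rangle\subseteq\langle\catP,z\rangle$; then, using \cite[Lemma 3.1.2]{NVY22} (the non-semiprime analogue of \Cref{lem:semiprimeidealinclusion}), one computes $y_m\otimes\catK\otimes y_m\subseteq\langle\catP\cup\{z\}\rangle\otimes\langle\catP\cup\{z\}\rangle\subseteq\langle(\catP\cup\{z\})\otimes\catK\otimes(\catP\cup\{z\})\rangle\subseteq\catP$, so $y_{m+1}=y_m\otimes s_{y_m}\otimes y_m\in\catP$, contradicting the choice of $\catP$. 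Hence $\catP$ is a semiprime thick $\otimes$-ideal containing $\catA$, so $\catP\supseteq\langle\catA\rangle_s\ni x=y_0$, contradicting that $\catP$ avoids $y_0$. The one delicate point throughout is that one must work with plain (not semiprime) ideals, so that the final contradiction lands on $y_{m+1}\in\catP$ rather than merely in its semiprime closure; the tensor-ideal inclusion lemma is exactly what dissolves the mixed terms $p\otimes w\otimes z$ and $z\otimes w\otimes p'$ into $\catP$.
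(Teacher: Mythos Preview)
Your proof is correct and follows essentially the same strategy as the paper: reduce to showing principals are compact, build the $m$-sequence $y_{n+1}=y_n\otimes s_{y_n}\otimes y_n$, and argue that some $y_n$ must land in the (plain) ideal $\catA$ via a Zorn/avoidance argument. The only difference is packaging: the paper invokes \cite[Lemma~A.1.1]{NVY23} (prime avoidance for nc-multiplicative sets) as a black box to obtain a \emph{prime} $\catP\supseteq\catA$ disjoint from $\{y_n\}$, whereas you reprove that lemma inline and only extract that $\catP$ is \emph{semiprime}, which already suffices since $x\in\langle\catA\rangle_s\subseteq\catP$. Your inline argument via \cite[Lemma~3.1.2]{NVY22} is exactly the engine behind the cited lemma, so there is no genuine divergence in method; your version is simply more self-contained.
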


    We prove two lemmas first.

    \begin{lemma}\label{lem:compacts_principal}
        Any non-principal ideal $\catI \in T_s(\catK)$ cannot be compact.
    \end{lemma}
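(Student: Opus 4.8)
The plan is to write $\catI$ as a join of principal semiprime ideals and then use compactness together with \Cref{lem:distributivityofprincipals} to collapse the resulting finite join into a single principal ideal.

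First I would observe that $\catI = \bigvee_{x \in \catI} \langle x\rangle_s$ in $T_s(\catK)$: since $\catI$ is itself a semiprime thick $\otimes$-ideal containing each $x \in \catI$, we have $\langle x\rangle_s \subseteq \catI$ for every such $x$, so the join is contained in $\catI$; conversely the join obviously contains every $x \in \catI$, hence contains $\catI$. Now suppose, towards the contrapositive, that $\catI$ is compact. Applying compactness to the cover $\catI \le \bigvee_{x \in \catI} \langle x\rangle_s$, there is a finite subset $x_1,\dots,x_n \in \catI$ with $\catI \le \langle x_1\rangle_s \vee \cdots \vee \langle x_n\rangle_s$.

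Next I would invoke \Cref{lem:distributivityofprincipals} (the join formula $\langle x\rangle_s \vee \langle y\rangle_s = \langle x \oplus y\rangle_s$, iterated $n-1$ times) to rewrite $\langle x_1\rangle_s \vee \cdots \vee \langle x_n\rangle_s = \langle x_1 \oplus \cdots \oplus x_n\rangle_s$. Set $x := x_1 \oplus \cdots \oplus x_n$. Since each $x_i \in \catI$ and $\catI$ is thick, $x \in \catI$, so $\langle x\rangle_s \subseteq \catI$; combined with the inclusion $\catI \subseteq \langle x\rangle_s$ from the previous paragraph this gives $\catI = \langle x\rangle_s$, i.e.\ $\catI$ is principal. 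Taking the contrapositive yields the statement: a non-principal $\catI \in T_s(\catK)$ cannot be compact.

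I do not anticipate a genuine obstacle here; the only point requiring a moment's care is the identification $\catI = \bigvee_{x \in \catI}\langle x\rangle_s$, which relies on $\catI$ already being semiprime (so that $\langle\catI\rangle_s = \catI$) — without semiprimality one would only get the $\otimes$-ideal hull. Everything else is a direct appeal to the join description of $t_s(\catK)$ established in \Cref{lem:distributivityofprincipals}.
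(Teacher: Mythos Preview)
Your proof is correct and follows essentially the same route as the paper: both argue by contrapositive, writing $\catI = \bigvee_{x\in\catI}\langle x\rangle_s$, extracting a finite subfamily by compactness, and collapsing the finite join to a single principal ideal via $\langle x\rangle_s \vee \langle y\rangle_s = \langle x\oplus y\rangle_s$. The paper's version is simply more terse.
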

    \begin{proof}
        Suppose $\catI \in T_s(\catK)$ is compact.
        As $\catI=\vee_{x\in\catI}\langle x\rangle_s$ compactness gives a finite subset $A \subseteq \catI$ with $\catI = \vee_{a\in A} \langle a\rangle_s=\langle \oplus_{a\in A} a\rangle_s$ and so is principal.
    \end{proof}
    
    \begin{lemma}\label{lemma:compact}
        Suppose $\catK$ has compact detection. 
        For any thick $\otimes$-ideal $\catI$ with $x \in \langle \catI \rangle_s$, there exists $y \in \catK$ witnessing compact detection for $x$ such that $x \otimes y \otimes x\in \catI$. 
        Consequently, if $S \subseteq \Ob(\catK)$ is a set of elements of $\catK$ with $x \in \langle S\rangle_s$, there exists a finite subset $S' \subseteq S$ such that $x \in \langle S' \rangle_s$.
    \end{lemma}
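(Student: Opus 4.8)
The plan is to prove the first assertion by a prime-avoidance argument and then deduce the ``consequently'' clause as a finiteness consequence. For the deduction: suppose $S\subseteq\Ob(\catK)$ with $x\in\langle S\rangle_s$, and apply the first part with $\catI=\langle S\rangle$, the thick $\otimes$-ideal generated by $S$. This yields some $y$ witnessing compact detection for $x$ with $x\otimes y\otimes x\in\langle S\rangle$. Since the thick $\otimes$-ideal closure is finitary, $\langle S\rangle=\bigcup\{\langle S'\rangle : S'\subseteq S\text{ finite}\}$, so $x\otimes y\otimes x\in\langle S'\rangle$ for some finite $S'\subseteq S$; then $x\in\langle x\rangle_s=\langle x\otimes y\otimes x\rangle_s\subseteq\langle S'\rangle_s$, the first equality because $y$ witnesses compact detection for $x$ and the inclusion because $\langle S'\rangle_s$ is semiprime and contains $x\otimes y\otimes x$.

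For the first assertion I would argue by contraposition: assume that \emph{no} $y$ witnessing compact detection for $x$ satisfies $x\otimes y\otimes x\in\catI$, and produce a prime $\catP$ with $\catI\subseteq\catP$ and $x\notin\catP$ (so $x\notin\langle\catI\rangle_s$). The device is the canonical ``$\otimes$-multiplicative'' sequence that compact detection provides: put $x_0=x$ and $x_{n+1}=x_n\otimes s_{x_n}\otimes x_n$, where $s_{x_n}$ witnesses compact detection for $x_n$. An induction shows every $x_n$ with $n\ge 1$ has the form $x\otimes w_n\otimes x$ with $\langle x\otimes w_n\otimes x\rangle_s=\langle x\rangle_s$, i.e.\ $w_n$ again witnesses compact detection for $x$: indeed $\langle x_{n+1}\rangle_s=\langle x_n\otimes s_{x_n}\otimes x_n\rangle_s=\langle x_n\rangle_s$ by the choice of $s_{x_n}$, and one rewrites $x_{n+1}=x\otimes(w_n\otimes x\otimes s_{x_n}\otimes x\otimes w_n)\otimes x$. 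By the standing assumption no such $x_n$ lies in $\catI$ (for $n=0$, note $x\in\catI$ would force $x\otimes s_x\otimes x\in\catI$), so $S:=\{x_n : n\ge 0\}$ is disjoint from $\catI$. Moreover, since each $x_{n+1}=(x_n\otimes s_{x_n})\otimes x_n$ and $\catI$-ideals are $\otimes$-ideals, $x_n\in M$ implies $x_{n+1}\in M$ for any thick $\otimes$-ideal $M$, hence all later terms lie in $M$ as well.

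Now by Zorn's lemma choose a thick $\otimes$-ideal $\catP$ maximal among those containing $\catI$ and disjoint from $S$ (the family is nonempty, as $\catI$ qualifies). Then $\catP$ is proper since $x=x_0\notin\catP$, and I claim $\catP$ is prime. If not, there are thick $\otimes$-ideals $\catA,\catB$ with $\catA\otimes\catB\subseteq\catP$ but $\catA\not\subseteq\catP$ and $\catB\not\subseteq\catP$; by maximality $\langle\catP\cup\catA\rangle$ and $\langle\catP\cup\catB\rangle$ both meet $S$, say $x_i\in\langle\catP\cup\catA\rangle$ and $x_j\in\langle\catP\cup\catB\rangle$ with $i\le j$, whence $x_j\in\langle\catP\cup\catA\rangle$ too by the ``later terms'' remark. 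Using the thick-$\otimes$-ideal analogue of \Cref{lem:semiprimeidealinclusion} (that is, \cite[Lemma 3.1.2]{NVY22}) together with the elementwise inclusion $(\catP\cup\catA)\otimes\catK\otimes(\catP\cup\catB)\subseteq\catP$ — valid because $\catP$ is an ideal and, for the remaining case, $\catA\otimes\catK\subseteq\catA$ so $\catA\otimes\catK\otimes\catB\subseteq\catA\otimes\catB\subseteq\catP$ — we obtain $\langle\catP\cup\catA\rangle\otimes\langle\catP\cup\catB\rangle\subseteq\catP$. But $x_{j+1}=x_j\otimes s_{x_j}\otimes x_j$ lies in this product (its first factor is in $\langle\catP\cup\catA\rangle$, its last in $\langle\catP\cup\catB\rangle$), so $x_{j+1}\in\catP\cap S$, a contradiction. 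Hence $\catP$ is prime, completing the contrapositive. The main obstacle is precisely the tension that compact detection dissolves: one needs a multiplicative set that at once contains $x$, misses $\catI$, and is ``closed enough'' to feed into the prime-lifting step (\Cref{lem:primeliftinglattice} is the lattice shadow of this), and it is the canonical witnesses $s_{x_n}$ that let the single sequence $\{x_n\}$ serve all three purposes.
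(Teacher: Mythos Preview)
Your proof is correct and follows essentially the same approach as the paper: construct the sequence $x_{n+1}=x_n\otimes s_{x_n}\otimes x_n$, observe that each $x_n$ has the form $x\otimes w_n\otimes x$ with $w_n$ witnessing compact detection for $x$, and then use a prime-lifting argument to show some $x_n$ must land in $\catI$; the ``consequently'' clause is deduced identically via the finitary nature of the thick $\otimes$-ideal closure. The only difference is cosmetic: the paper packages the prime-lifting step by noting that $\{x_n\}$ is an nc-multiplicative set and invoking \cite[Lemma~A.1.1]{NVY23}, whereas you reprove that lemma inline via Zorn and a direct primality check using \cite[Lemma~3.1.2]{NVY22}.
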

    \begin{proof}
        If $x\in \catI$, any $y$ witnessing compact detection for $x$ satisfies $x\otimes y\otimes x\in \catI$. Thus, we may assume $x\notin\catI$.
        Let $x_1 := x$ and pick an $a_1 \in \catK$ witnessing compact detection for $x_1$, which exists by assumption, and put $x_2 := x_1 \otimes a_1 \otimes x_1$. 
        Then, inductively for $n > 1$, define $x_{n+1} := x_{n} \otimes a_{n} \otimes x_{n}$ where $a_n$ is chosen to witness compact detection for $x_n$.
        By construction, for any semiprime $\otimes$-ideal $\catJ$, $x \in \catJ $ if and only if $x_n \in \catJ $ for any $n> 1$. Thus as every $x_i$ is of the form $x\otimes s_i\otimes x$, necessarily $s_i$ witnesses compact detection for $x$. 

        The set $ \catM := \{x_1, x_2,x_3,\dots\}$ is a nc-multiplicative in the sense of \cite[Appendix A]{NVY23}. Suppose for contradiction that $\catM \cap \catI = \emptyset$. Then by \cite[Lemma A.1.1]{NVY23} there exists a prime $\catP \in \Spc(\catK)$ such that $\catP \supseteq \catI$ and $\catM \cap \catP = \emptyset$. 
        However, since $x\in\langle \catI\rangle_s$ by assumption, $x \in \catP$ for all primes $\catP \supseteq \catI$ (as the semiprime hull of an ideal is equivalently the intersection of all primes containing that ideal). 
        As $x \in \catM$, this is a contradiction. 
        Thus, there exists some $x_i \in \catI$ as desired. 

        For the final statement, if $x\in\langle S\rangle_s$ pick a $y$ as above with $x\otimes y\otimes x\in\langle S\rangle$.
        Since every element of $\langle S\rangle$ can be built from elements of $S$ in finitely many steps via tensoring on the left and right, taking summands, shifts, and cones, we may take $S'$ to be the finite set of elements of $S$ involved in the construction of $x \otimes y \otimes x$.
        Thus $x \otimes y \otimes x\in \langle S'\rangle$ and consequently $x\in \langle x\rangle_s = \langle x \otimes y \otimes x\rangle_s \subseteq \langle S'\rangle_s$.
    \end{proof}

    \begin{proof}[Proof of \Cref{prop:characterizingcompactelts}]
        By \Cref{lem:compacts_principal} it suffices to show that every principal is compact if and only if $\catK$ has compact detection.

        Suppose every principal is compact. Fix $x\in\catK$. 
        We have $x\in \langle x\otimes \catK\otimes x\rangle_s=\vee_{y\in\catK}\langle x\otimes y\otimes x \rangle$, so by assumption there exists a finite subset $S \subseteq \catK$ for which $\langle x\rangle_s = \vee_{y\in S}\langle x\otimes y\otimes x \rangle$.
        Taking the sum of the elements in $S$ gives an element witnessing compact detection for $x$. Hence, $\catK$ has compact detection.
 
        Conversely, suppose $\catK$ has compact detection. Pick $x \in \catK$ and suppose $\{\catI_i\}_{i\in I}\subseteq T_s(\catK)$ is a collection of semiprime thick $\otimes$-ideals with $\langle x\rangle_s \leq \vee_{i\in I} \catI_i=\langle \cup_i \catI_i\rangle_s$.
        By \Cref{lemma:compact} there exist a finite subset $S\subset  \cup_i \catI_i$ with $x\in \langle S\rangle_s$; clearly, this implies there is a finite subset $I'\subset I$ with $\langle x\rangle_s \leq \vee_{i\in I'} \catI_i$ showing $\langle x\rangle_s$ is compact.
    \end{proof}

    The following corollary can be observed directly from the proof of \Cref{prop:characterizingcompactelts}. It is unclear if the converse holds in full generality. 
    \begin{corollary}\label{cor:noncompacts}
        If $\langle x\rangle_s$ is a compact element of $T_s(\catK)$, then $x \in \catK$ has compact detection. 
    \end{corollary}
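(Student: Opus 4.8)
The plan is to extract the ``$\langle x\rangle_s$ compact $\Rightarrow$ $x$ has compact detection'' content that is already implicit in the proof of \Cref{prop:characterizingcompactelts}, observing that the relevant implication there is local in $x$: it never uses compactness of any principal other than $\langle x\rangle_s$ itself.

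Concretely, fix $x\in\catK$ with $\langle x\rangle_s$ compact. First I would record the identity $\langle x\rangle_s=\langle x\otimes\catK\otimes x\rangle_s=\bigvee_{y\in\catK}\langle x\otimes y\otimes x\rangle_s$ in $T_s(\catK)$: the inclusion $x\otimes\catK\otimes x\subseteq\langle x\otimes\catK\otimes x\rangle_s$ together with semiprimality forces $x\in\langle x\otimes\catK\otimes x\rangle_s$, while the reverse containment is automatic since $\langle x\rangle_s$ is a $\otimes$-ideal. Applying compactness of $\langle x\rangle_s$ to this join yields a finite subset $S\subseteq\Ob(\catK)$ with
\[
\langle x\rangle_s=\bigvee_{y\in S}\langle x\otimes y\otimes x\rangle_s=\Big\langle\bigoplus_{y\in S}(x\otimes y\otimes x)\Big\rangle_s .
\]
Then I would set $s_x:=\bigoplus_{y\in S}y$; since $\otimes$ is exact, hence additive, in each variable, we get $x\otimes s_x\otimes x\cong\bigoplus_{y\in S}(x\otimes y\otimes x)$, and therefore $\langle x\otimes s_x\otimes x\rangle_s=\langle x\rangle_s$. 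By definition this says exactly that $s_x$ witnesses compact detection for $x$, so $x$ has compact detection.

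I do not expect a genuine obstacle here: all of the work is in \Cref{prop:characterizingcompactelts} and its supporting lemmas, and this corollary is a purely bookkeeping remark that the forward half of that argument uses only the hypothesis on the single ideal $\langle x\rangle_s$. The one point worth stating with a little care is the passage from a finite witnessing set $S$ to a single witnessing object $s_x$, which is legitimate precisely because $\otimes$ commutes with finite direct sums (equivalently, invoking the remark following the definition of compact detection, one may simply work with the finite set $S$ directly).
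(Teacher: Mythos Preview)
Your proposal is correct and is exactly the argument the paper has in mind: the paper gives no separate proof but simply remarks that the corollary ``can be observed directly from the proof of \Cref{prop:characterizingcompactelts},'' and your write-up extracts precisely that local-in-$x$ half of the proof (the identity $\langle x\rangle_s=\bigvee_{y\in\catK}\langle x\otimes y\otimes x\rangle_s$, compactness to pass to a finite $S$, and taking $s_x=\bigoplus_{y\in S}y$). Your additional care in justifying the equality $\langle x\rangle_s=\langle x\otimes\catK\otimes x\rangle_s$ via semiprimality and the passage from $S$ to $s_x$ via additivity of $\otimes$ is fine and matches the paper's implicit reasoning.
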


    \begin{corollary}\label{cor:semiprimeimpliescompactdetection}
        Suppose all ideals of $\catK$ are semiprime (e.g., $\catK$ is rigid). Then $\catK$ has compact detection.
    \end{corollary}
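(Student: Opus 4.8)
The plan is to use the hypothesis to reduce the statement to a finite-generation fact, and then apply exactness of $\otimes$. When every thick $\otimes$-ideal of $\catK$ is semiprime we have $T_s(\catK) = T(\catK)$ and $\langle -\rangle_s = \langle -\rangle$, so for a fixed $x\in\catK$ the biconditional defining compact detection — that $x\in\catI \Leftrightarrow x\otimes s_x\otimes x\in\catI$ for all $\catI\in T_s(\catK)$ — is equivalent to the single equality $\langle x\rangle = \langle x\otimes s_x\otimes x\rangle$. Since the inclusion $\langle x\otimes s_x\otimes x\rangle \subseteq \langle x\rangle$ holds for every choice of $s_x$, the entire task is to produce some $s_x\in\catK$ with $x\in\langle x\otimes s_x\otimes x\rangle$.

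To this end I would consider the thick $\otimes$-ideal $\langle x\otimes\catK\otimes x\rangle$ generated by the collection $\{x\otimes z\otimes x \mid z\in\catK\}$. By hypothesis this ideal is semiprime, and $x\otimes z\otimes x$ lies in it for every $z\in\catK$; applying the semiprimality criterion to the object $x$ and the ideal $\catI = \langle x\otimes\catK\otimes x\rangle$ then forces $x\in\langle x\otimes\catK\otimes x\rangle$. Now I invoke the concrete description of a generated thick $\otimes$-ideal recalled after the definition of prime ideals: since $x$ can be built from the collection $\{x\otimes z\otimes x\}_{z\in\catK}$ in finitely many steps via cones, direct summands, and tensoring on either side, only finitely many generators, say $x\otimes y_1\otimes x,\dots,x\otimes y_n\otimes x$, are involved, whence $x\in\langle x\otimes y_1\otimes x,\dots,x\otimes y_n\otimes x\rangle$. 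Because $\otimes$ is exact, hence additive, one has $\bigoplus_{i=1}^n(x\otimes y_i\otimes x) \cong x\otimes\bigl(\bigoplus_{i=1}^n y_i\bigr)\otimes x$, and the thick $\otimes$-ideal generated by finitely many objects coincides with the one generated by their direct sum; so setting $s_x := \bigoplus_{i=1}^n y_i$ yields $x\in\langle x\otimes s_x\otimes x\rangle$, as required.

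There is no serious obstacle here: morally, semiprimality converts the infinite generating family $x\otimes\catK\otimes x$ back into $x$, while finite generation together with additivity of $\otimes$ compresses it into a single witness $s_x$. The only points meriting care are the interchange of $\langle-\rangle$ and $\langle-\rangle_s$ licensed by the hypothesis, and the passage from the finitely many generators $x\otimes y_i\otimes x$ to the single generator $x\otimes(\bigoplus_i y_i)\otimes x$, which is precisely where exactness of the tensor product enters. As an alternative route, one could argue through \Cref{prop:characterizingcompactelts}: under the hypothesis it suffices to check that the compact elements of $T_s(\catK)=T(\catK)$ are exactly the principal ideals, one direction being \Cref{lem:compacts_principal} and the other the observation that if $\langle x\rangle\leq\bigvee_i\catI_i = \langle\bigcup_i\catI_i\rangle$, then $x$, being built from finitely many objects of $\bigcup_i\catI_i$, already lies in a finite subjoin.
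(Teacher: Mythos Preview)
Your argument is correct. The paper's proof is in fact your ``alternative route'': it observes $T_s(\catK)=T(\catK)$, invokes \Cref{prop:characterizingcompactelts} to reduce to showing the compact elements of $T(\catK)$ are exactly the principal ideals, and then appeals to \Cref{lem:compacts_principal} together with the finite-building description of $\langle S\rangle$.

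Your main argument is a more direct construction of the witness $s_x$: you bypass the abstract compactness characterization entirely and exhibit $s_x$ by hand, using semiprimality of $\langle x\otimes\catK\otimes x\rangle$ to get $x$ inside it and then the finite-building description plus additivity of $\otimes$ to compress. This is slightly more elementary in that it does not pass through \Cref{prop:characterizingcompactelts}, though the underlying content---that membership in $\langle S\rangle$ only uses finitely many elements of $S$---is the same in both approaches. The paper's route has the mild advantage of situating the result within the general dictionary between compact detection and compactness of principals; yours has the advantage of being self-contained.
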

    \begin{proof}
        By assumption $T_s(\catK)=T(\catK)$ the lattice of thick $\otimes$-ideals.
        Hence, by \Cref{prop:characterizingcompactelts}, it suffices to show that the compact elements of $T(\catK)$ are precisely the principal ideals $\langle x\rangle$. 
        This is well-known. 
        The same argument as \Cref{lem:compacts_principal} shows that any compact needs to be principal and the fact that principals are compact was already used in \Cref{lemma:compact}; i.e.,\ any object in $\langle S\rangle$ can be built from elements of $S$ in finitely many steps via tensoring on the left and right, taking summands, and taking cones.   
    \end{proof}

    \begin{remark}\hfill
        \begin{enumerate}
            \item The converse of \Cref{cor:semiprimeimpliescompactdetection} does not hold; $\catK$ having compact detection does not imply all thick $\otimes$-ideals of $\catK$ are semiprime. This can easily be seen in the tensor-triangular setting: if $\catK$ is a tensor-triangulated category, then $T_s(\catK) = T_r(\catK)$ and $t_s(\catK) = t_r(\catK)$. 
            Hence $\catK$ always has compact detection, but there are small tensor-triangulated categories with non-radical thick $\otimes$-ideals. 
            \item \cite[Theorem 6.4.5]{GS23} can be viewed, after a bit of yoga, as a reformulation of \Cref{cor:classification_for_spatial_frames} with the assumption that all ideals are semiprime. It is certainly the processor. 
            The dual topology provided in \cite[Theorem 6.4.5]{GS23} is defined by declaring the closed sets to be generated by the quasi-compact open subsets of $\pt(T_s(\catK))$.
            Since, in particular, their assumptions imply $\catK$ has compact detection, the quasi-open subsets are exactly the supports of objects $\supp(x)$.             
            Thus, this coincides exactly with the topology of $\Spc(\catK)^\nu$.
            \item If follows immediately that when $\catK$ satisfies compact detection that $T_s(\catK)$ is algebraic, as the principal ideals always generate under join.
            Of course, it may be possible that $T_s(\catK)$ is algebraic without compact detection holding, but we know of no examples.

            \item If $\catK$ satisfies compact detection and principal closure, then one can say more about the element giving the intersection; namely, for all $x,y \in \catK$, if $\langle x\rangle_s \cap \langle y \rangle_s$ is principal and hence compact $\langle x\rangle_s \cap \langle y \rangle_s = \langle x \otimes \catK \otimes y\rangle_s$ implies there exists a $z\in\catK$ with $\langle x \rangle_s \cap \langle y \rangle_s = \langle x \otimes z \otimes y\rangle_s$ (cf.\ the proof of \Cref{prop:characterizingcompactelts}).
        \end{enumerate}
        
    \end{remark}

    \section{Control of the principals}\label{sec:control_pp}

    We have everything we need to characterize when the lattice $T_s(\catK)$ is coherent in such a way that the usual classification in terms of Thomason subsets of the noncommutative Balmer spectrum $\Spc(\catK)$ goes through. 

    \begin{theorem}\label{thm:distlattice}
        Let $\catK$ be a monoidal-triangulated category. The following are equivalent.
        \begin{enumerate}
            \item $\catK$ is principally closed and has compact detection;
            \item $T_s(\catK)$ is a coherent frame with sublattice of compact elements $t_s(\catK)$.
        \end{enumerate}
    \end{theorem}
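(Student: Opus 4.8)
The plan is to read everything off three facts already in hand: by \Cref{cor:thicksarespatialframe} the lattice $T_s(\catK)$ is always a (spatial) frame; by \Cref{prop:characterizingcompactelts} the compact elements of $T_s(\catK)$ coincide with the principal semiprime ideals $\langle x\rangle_s$ exactly when $\catK$ has compact detection; and by \Cref{lem:distributivityofprincipals} one always has $\langle x\rangle_s \vee \langle y\rangle_s = \langle x\oplus y\rangle_s$ in $T_s(\catK)$. So the task reduces to checking that "coherent frame with compact part $t_s(\catK)$" unpacks into precisely the two conditions of (1).

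For (1) $\Rightarrow$ (2): assuming compact detection, \Cref{prop:characterizingcompactelts} gives $T_s(\catK)^c = \{\langle x\rangle_s \mid x\in\catK\} = t_s(\catK)$, so it remains to verify that $t_s(\catK)$ is a bounded sublattice of $T_s(\catK)$ that generates under joins. Generation is automatic, since $\catI = \vee_{x\in\catI}\langle x\rangle_s$ for every $\catI\in T_s(\catK)$. The bottom $\langle 0\rangle_s$ and the top $\catK = \langle\bbone\rangle_s$ are both principal, so $t_s(\catK)$ is bounded. It is closed under finite joins by \Cref{lem:distributivityofprincipals}, and closed under finite meets precisely by principal closure, which says $\langle x\rangle_s\cap\langle y\rangle_s = \langle z\rangle_s$ for some $z$. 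Hence $t_s(\catK)$ is a bounded sublattice (distributive, as recorded in \Cref{lem:distributivitygivenprincipalclosure}, though this is in any case inherited from $T_s(\catK)$), and $T_s(\catK)$ is coherent with compact part $t_s(\catK)$.

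For (2) $\Rightarrow$ (1): if $T_s(\catK)$ is coherent with $T_s(\catK)^c = t_s(\catK)$, then its compact elements are exactly the principal semiprime ideals, so \Cref{prop:characterizingcompactelts} forces $\catK$ to have compact detection. Coherence also means $T_s(\catK)^c$ is a sublattice, hence closed under binary meets, so $\langle x\rangle_s\cap\langle y\rangle_s = \langle x\rangle_s\wedge\langle y\rangle_s \in t_s(\catK)$ is principal for all $x,y\in\catK$; this is exactly principal closure.

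I do not expect a genuine obstacle here: the substantive work is front-loaded into \Cref{prop:characterizingcompactelts} (and thence \Cref{lemma:compact}). The only points requiring a moment's care are that $t_s(\catK)$ is a \emph{bounded} sublattice — in particular that the top element $\catK$ is itself principal, namely $\catK=\langle\bbone\rangle_s$ — and the observation that "$T_s(\catK)$ coherent with compact part $t_s(\catK)$" is equivalent, after invoking \Cref{prop:characterizingcompactelts}, to $t_s(\catK)$ being closed under finite meets inside $T_s(\catK)$, which is verbatim principal closure.
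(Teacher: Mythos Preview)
Your proposal is correct and takes essentially the same approach as the paper: the paper's proof is a terse two-sentence version of exactly what you wrote, noting that principals always generate under joins, that $t_s(\catK)$ equals the compacts iff compact detection holds (\Cref{prop:characterizingcompactelts}), and that $t_s(\catK)$ is a sublattice iff principal closure holds. Your version is more explicit about boundedness and the join formula, but the logic is identical.
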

    \begin{proof}
        The principals always generate under join, i.e.,\ for any $\catI\in T_s(\catK)$ one has $\catI=\vee_{x\in\catI}\langle x\rangle_s$.
        Moreover, $t_s(\catK)$ are the compacts if and only if compact detection holds and these form a sublattice if and only if principal closure holds.
        %
        %
    \end{proof}
    \begin{remark}
        One can also rephrase this in terms of the object poset. 
        The conditions of the theorem are equivalent to $T_s(\catK)\isoto \Id(L_s(\catK))$ under the map $\catI \mapsto \{[x] \in L_s(\catK) \mid \langle x\rangle_s \subseteq \catI\}$.
    \end{remark}

    \begin{remark}

        We can see how $T_s(\catK)$ can fail to satisfy the conditions of \Cref{thm:distlattice}. 
        If principal closure fails to hold, then $t_s(\catK)$ fails to be a sublattice of $T_s(\catK)$, but still can generate $T_s(\catK)$. 
        If compact detection fails on the other hand, $t_s(\catK)$ will contain non-compact elements of $T_s(\catK)$, the compact elements of $T_s(\catK)$ may or may not form a distributive lattice, but it is not clear they generate $T_s(\catK)$.

        In principle, it seems conceivable that $T_s(\catK)$ could be a coherent frame whose compact elements are properly contained in $t_s(\catK)$ - this cannot happen if $\catK$ has a commutative tensor product. It would be interesting to know if there are any interesting examples of essentially small monoidal-triangulated categories $\catK$ where this occurs. Such a category would necessarily have to be non-rigid, so our expectation is that examples may be rather pathological. 
    \end{remark}

    \begin{corollary}\label{cor:ncspcspectralspace}
        Let $\catK$ be a monoidal-triangulated category. The following are equivalent:
        \begin{enumerate}
            \item\label{item:ncspcspectralspace1} $\catK$ is principally closed and has compact detection;
            \item\label{item:ncspcspectralspace2} The noncommutative Balmer spectrum $\Spc(\catK)$ is a spectral space with quasi-compact opens given by complements of supports,  \[K^\circ(\Spc(\catK)) = \{\supp(x)^c \mid x \in \catK\}.\]
        \end{enumerate}
    \end{corollary}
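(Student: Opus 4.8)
The plan is to reduce the statement entirely to Stone and Hochster duality. By \Cref{thm:distlattice}, the condition that $\catK$ be principally closed and have compact detection is equivalent to: $T_s(\catK)$ is a coherent frame whose sublattice of compact elements is $t_s(\catK)$. So it suffices to translate this purely lattice-theoretic statement across the isomorphism $T_s(\catK)\cong\Omega(\Spc(\catK)^\nu)$ of \Cref{cor:classification_for_spatial_frames} and through Hochster duality.

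First I would record the bookkeeping. Under the isomorphism $T_s(\catK)\cong\Omega(\Spc(\catK)^\nu)$, the principal semiprime ideal $\langle x\rangle_s$ is sent to $\supp(x)$ (as $\bigcup_{y\in\langle x\rangle_s}\supp(y)=\supp(x)$), so $t_s(\catK)$ corresponds to the collection $\{\supp(x)\mid x\in\catK\}$; and, as is routine to check (see the remark following \Cref{cor:classification_for_spatial_frames}), the compact elements of $T_s(\catK)$ correspond to the quasi-compact open subsets of $\Spc(\catK)^\nu$. Hence the condition of \Cref{thm:distlattice} holds if and only if $\Omega(\Spc(\catK)^\nu)$ is a coherent frame whose compact elements are exactly the supports of objects; by \Cref{thm:balmerclassification_without_conditions} (coherence of $T_s(\catK)$ being equivalent to spectrality of $\Spc(\catK)^\nu$) together with the fact that the compact elements of the frame of opens of a spectral space are precisely its quasi-compact opens, this is in turn equivalent to: $\Spc(\catK)^\nu$ is a spectral space with $K^\circ(\Spc(\catK)^\nu)=\{\supp(x)\mid x\in\catK\}$.

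It remains to pass between $\Spc(\catK)^\nu$ and $\Spc(\catK)$. The key point is that, although the two spaces are only ``pseudo-Hochster-dual'' in general, once $\Spc(\catK)^\nu$ is spectral \emph{and} its quasi-compact opens are precisely the supports $\supp(x)$, the genuine Hochster dual $(\Spc(\catK)^\nu)^\vee$ is a spectral space whose closed sets are generated by $\{\supp(x)\mid x\in\catK\}$ --- which is by definition the topology of $\Spc(\catK)$. Thus $\Spc(\catK)=(\Spc(\catK)^\nu)^\vee$ is spectral, and (cf.\ \Cref{prop:quasicompacts}) its quasi-compact opens are the complements of those of $\Spc(\catK)^\nu$, i.e.\ $K^\circ(\Spc(\catK))=\{\supp(x)^c\mid x\in\catK\}$. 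For the converse, if $\Spc(\catK)$ is spectral with $K^\circ(\Spc(\catK))=\{\supp(x)^c\}$, then $\Spc(\catK)^\vee$ is spectral with open sets generated by $\{\supp(x)\}$, hence $\Spc(\catK)^\vee=\Spc(\catK)^\nu$ and $K^\circ(\Spc(\catK)^\nu)=\{\supp(x)\}$, recovering the condition of the previous paragraph.

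I expect the main obstacle to be precisely this last bookkeeping with the two pseudo-dual topologies: one must be careful not to apply Hochster duality to $\Spc(\catK)^\nu$ or to $\Spc(\catK)$ before establishing that it is spectral with the object-supports as its quasi-compact opens, since without that the passage between the two spaces is not the honest Hochster dual. Everything else is a direct unwinding of \Cref{thm:Stone}, \Cref{prop:quasicompacts}, and the definition of the Hochster dual, combined with the already-established \Cref{thm:distlattice} and \Cref{cor:classification_for_spatial_frames}.
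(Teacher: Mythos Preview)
Your proposal is correct and follows essentially the same route as the paper: both reduce to \Cref{thm:distlattice}, identify $\pt(T_s(\catK))=\Spc(\catK)^\nu$, and then pass to $\Spc(\catK)$ via Hochster duality once spectrality is established. The only cosmetic difference is that the paper, for \ref{item:ncspcspectralspace1}$\Rightarrow$\ref{item:ncspcspectralspace2}, packages the Stone-duality step by writing $\Spc(\catK)=\Spc(t_s(\catK))^\vee$ directly (using the lattice spectrum of \Cref{sec:Spc_BDLat} and \Cref{prop:quasicompacts}), whereas you go through $\Spc(\catK)^\nu$ first and then Hochster-dualize; the converse direction is argued identically in both.
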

    \begin{proof}
        For \ref{item:ncspcspectralspace1} implies \ref{item:ncspcspectralspace2}, we have $T_s(\catK)=\Id(t_s(\catK))$ and therefore $\Spc(\catK) = \Spc(t_s(\catK))^\vee$, where the latter is the spectrum of the bounded distributive lattice as defined in \Cref{sec:Spc_BDLat}, whose quasi-compacts are characterized by \Cref{prop:quasicompacts}.

        Conversely, suppose \ref{item:ncspcspectralspace2}.   
        Then, the Hochster dual of $\Spc(\catK)$ is exactly $\pt(T_s(\catK))$.
        As this is the (genuine) Hochster dual of a spectral space, this shows $\Omega(\pt(T_s(\catK)))$ is a coherent frame with compact elements precisely the supports $\{\supp(x) \mid x \in \catK\}$.
        As $T_s(\catK)=\Omega(\pt(T_s(\catK)))$, with principals corresponding to the supports of elements, the principals are exactly the sublattice of compact objects, as desired.
    \end{proof}

    Furthermore, Balmer's classification theorem holds in its original form. 
    
    \begin{corollary}[{c.f.\ \cite[Theorem 4.10]{Bal05}}]\label{cor:balmerclassification}
        If $\catK$ has principal closure and compact detection then we have an order-preserving bijection $T_s(\catK) \cong \on{Th}(\Spc(\catK))$ induced by the following assignments:
        \[\catI \mapsto \bigcup_{x \in \catI} \supp(x) \in \on{Th}(\Spc(\catK))\] \[Z  \mapsto \{x \in \catK \mid \supp(x) \subseteq Z\} \in T_s(\catK).\]
    \end{corollary}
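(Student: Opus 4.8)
\emph{Proof proposal.}
The plan is to obtain this statement by assembling the results already in hand; the content has essentially all been proven, and what remains is bookkeeping of topologies. Under the standing hypotheses of principal closure and compact detection, \Cref{cor:ncspcspectralspace} gives that $\Spc(\catK)$ is a spectral space whose quasi-compact opens are exactly the complements $\supp(x)^c$, $x \in \catK$. The first step is to use this to identify the pseudo-Hochster dual with the genuine one: since the sets $\supp(x)$ are precisely the closed subsets of $\Spc(\catK)$ with quasi-compact complement, they form an open base for the Hochster dual topology, so $\Spc(\catK)^\nu = \Spc(\catK)^\vee$. In particular $\Omega(\Spc(\catK)^\nu) = \Omega(\Spc(\catK)^\vee) = \on{Th}(\Spc(\catK))$, the lattice of Thomason subsets of $\Spc(\catK)$, by definition of the Hochster dual.

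Next I would invoke \Cref{cor:classification_for_spatial_frames} (equivalently the second half of \Cref{thm:balmerclassification_without_conditions}), which furnishes a natural isomorphism of lattices $T_s(\catK) \isoto \Omega(\Spc(\catK)^\nu)$ sending $\catI \mapsto \bigcup_{x\in\catI}\supp(x)$. Composing with the identification $\Omega(\Spc(\catK)^\nu) = \on{Th}(\Spc(\catK))$ from the previous paragraph yields the desired order-preserving bijection $T_s(\catK) \cong \on{Th}(\Spc(\catK))$, and the forward assignment is visibly the one in the statement. To pin down the inverse, I would appeal to the explicit description of classifying open support data (as recorded around \Cref{prop:coherent_support} and \Cref{prop:spatial_support}): the inverse of $\catI \mapsto \bigcup_{x\in\catI}\supp(x)$ is $U \mapsto \{x \in \catK \mid \supp(x) \subseteq U\}$, and restricting to Thomason subsets $Z \subseteq \Spc(\catK)$ gives precisely $Z \mapsto \{x \in \catK \mid \supp(x) \subseteq Z\}$, as claimed.

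I do not expect a genuine obstacle here: the real work sits in \Cref{cor:ncspcspectralspace} and \Cref{cor:classification_for_spatial_frames}. The only point requiring care is keeping the three topologies on the underlying set of $\Spc(\catK)$ straight — the Balmer topology with the $\supp(x)$ as a closed base, the pseudo-dual $\Spc(\catK)^\nu$ with the $\supp(x)$ as an open base, and the genuine Hochster dual $\Spc(\catK)^\vee$ — and verifying that under the hypotheses the latter two coincide, so that $\Omega(\Spc(\catK)^\nu)$ is literally $\on{Th}(\Spc(\catK))$ and the isomorphism of \Cref{cor:classification_for_spatial_frames} can be read as the asserted classification.
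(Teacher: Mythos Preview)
Your proposal is correct and follows essentially the same approach as the paper: the paper's proof is the one-line ``Follows directly from \Cref{thm:balmerclassification_without_conditions} and \Cref{cor:ncspcspectralspace},'' and you have simply unpacked how those two results combine, including the identification $\Spc(\catK)^\nu = \Spc(\catK)^\vee$ that makes $\Omega(\Spc(\catK)^\nu) = \on{Th}(\Spc(\catK))$. Your additional remark pinning down the inverse map via the explicit description of classifying support data is a helpful elaboration but not a departure from the paper's route.
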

    \begin{proof}
        Follows directly from \Cref{thm:balmerclassification_without_conditions} and \Cref{cor:ncspcspectralspace}.
    \end{proof}

    \begin{remark}\label{rmk:examplesofclassification}
        \Cref{cor:balmerclassification} subsumes the following cases (listed in \cite[Theorem 2.1]{HV25}), as all of the these have principal closure and compact detection (c.f.\  \Cref{ex:princ_clos,ex:princ_clos}):
        \begin{enumerate}
            \item When $\otimes$ is commutative \cite[Theorem 4.10]{Bal05};
            \item When all prime ideals of $\catK$ are completely prime \cite[Theorem 3.11]{MR23};
            \item When $\Spc(\catK)$ is Noetherian \cite[Theorem 4.9]{Row24};
            \item When $\catK$ has a thick generator \cite[Theorem A.7.1]{NVY23}.
        \end{enumerate}
        As we verify in \Cref{sec:centralgeneration}, \Cref{cor:balmerclassification} also holds for monoidal-triangulated categories with the (weak) central generation property and compact detection (the latter being automatic when half-rigid). Moreover, \cite[Example 7.7]{HV25} is an example of a rigid, hence compact-detecting, monoidal-triangulated category that is not principally closed, since its noncommutative Balmer spectrum is non-spectral (see \Cref{obs:Ts_can_be_ugly}).
    \end{remark}

    We also obtain a strengthening of \cite[Theorem 6.2.1]{NVY22}, on detecting Balmer spectra via classifying support data. In particular, this statement extends beyond the Noetherian Balmer spectrum setting, although it still lives within the coherent frame setting. 
    As the results are stated in terms of \emph{closed} support data we quickly translate the required language from \Cref{sec:support_coherent} into that language.

    For ease, we assume $\catK$ has principal closure and compact detection, equivalently $T_s(\catK)$ is coherent with compact part the principals $t_s(\catK)$.
    The definition of a weak support datum as in \cite[Definition 4.4.1]{NVY22} then corresponds to the following.\footnote{\label{footnote:tweak_weak}Technically this only holds under the conditions imposed in \cite[Theorem 6.2.1]{NVY22}, namely that all ideals are semiprime and the principals have closed values. However, what we present is correct from the lattice point of view, which is equivalent to \cite[Definition 4.4.1]{NVY22} with the modification that objects have values in closed subsets and (e) is required to hold only for semiprime ideals.}
    \begin{definition}\label{def:weak_supp}
        Assume $\catK$ has principal closure and compact detection. A weak support datum $(X,\sigma)$ for $\catK$ is a bounded lattice morphisms $\sigma\colon t_s(\catK) \to \on{Cl}(X)$ to the set of closed sets of $X$.
        Analogously to \Cref{def:supportdatum} we say $(X,\sigma)$ satisfies:
        \begin{enumerate}
            \item \textit{injectivity} if $\sigma$ is an injective map of sets;
            \item \textit{faithfulness} if $\sigma(x) = \emptyset$ implies $x = 0$; 
            \item \textit{realization} if the Thomason closed\footnote{$A\subseteq X$ is \emph{Thomason closed}, if it is open for $X^\vee$ and closed for $X$; equivalently $A$ is the complement of a quasi-compact open.} subsets lie in the image of $\sigma$;
            \item \textit{Noetherian realization} if $\sigma$ is surjective.
        \end{enumerate}
    \end{definition}
    Any weak support datum $(X,\sigma)$ for $\catK$ yields an open support datum $(X,\tau)$ for $t_s(\catK)^{\on{op}}$ by putting $\tau(Z):=X\setminus Z$, and vice versa. The properties above correspond to the respective ones for open support data. Hence the results from \Cref{sec:support_coherent} translate immediately. 
    
    \begin{corollary}\label{thm:classifyingsupportdatumforclosedsupport}
        Let $\catK$ be a monoidal-triangulated category which is principally closed and has compact detection, and let $(X, \sigma)$ be a weak support datum (in the sense of \cite{NVY22}, modified according to \Cref{footnote:tweak_weak}) for $\catK$. 
        \begin{enumerate}
            \item\label{item:classifyingsupportdatum1} If $(X,\sigma)$ is injective and realizing, then $(X,\sigma)$ is a classifying support datum, hence $(X,\sigma) \cong (\Spc(\catK),\supp)$. 
            \item\label{item:classifyingsupportdatum2} Assume $\catK = \catT^c$ for a rigidly-compactly-generated monoidal-triangulated category $\catT$ and that $(X,\sigma)$  and that $(X,\sigma)$ is an extended weak support datum for $\catT$. If $(X,\sigma)$ is \textit{faithful} and realizing, then $(X,\sigma) \cong (\Spc(\catK), \supp)$.
        \end{enumerate}
        In either case, if $(X,\sigma)$ is Noetherian-realizing, then $\Spc(\catK)$ is Noetherian.
    \end{corollary}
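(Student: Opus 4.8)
The plan is to reduce both parts to the lattice-theoretic classification criteria of \Cref{sec:support_coherent} through the dictionary recorded in \Cref{def:weak_supp}. Since $\catK$ is principally closed and has compact detection, \Cref{thm:distlattice} gives that $t_s(\catK)$ is a bounded distributive lattice with $\Id(t_s(\catK))=T_s(\catK)$, and hence, as in the proof of \Cref{cor:ncspcspectralspace}, $\Spc(\catK)=\Spc(t_s(\catK))^\vee$ with the supports $\supp(x)$ as its closed basis. First I would pass from the weak support datum $(X,\sigma)$ for $\catK$ to the open support datum $(X,\tau)$ on the bounded distributive lattice $t_s(\catK)^{\op}$ given by $\tau(Z):=X\setminus Z$. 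Complementing values is a bijection between weak support data for $\catK$ and open support data for $t_s(\catK)^{\op}$ which matches injectivity, realization (the Thomason-closed subsets of $X$ being exactly the complements of its quasi-compact opens), and Noetherian realization; and, using the Hochster-duality identification $(\Spc(t_s(\catK)^{\op}),\supp)=(\Spc(t_s(\catK))^\vee,\supp^c)=(\Spc(\catK),\supp)$, one sees that $(X,\sigma)\cong(\Spc(\catK),\supp)$ as closed support data exactly when $(X,\tau)\cong(\Spc(t_s(\catK)^{\op}),\supp)$ as open support data.

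Part (1) is then immediate: if $(X,\sigma)$ is injective and realizing then so is $(X,\tau)$, so $(X,\tau)$ is classifying by \Cref{prop:classifying}, hence isomorphic to the final open support datum $(\Spc(t_s(\catK)^{\op}),\supp)$ by \Cref{prop:coherent_support}; translating back yields $(X,\sigma)\cong(\Spc(\catK),\supp)$. For the Noetherian addendum, if moreover $\sigma$ is surjective then so is $\tau$, and \Cref{cor:noetherianrealizing} shows the space $X$ is Noetherian; since $X\cong\Spc(t_s(\catK)^{\op})=\Spc(\catK)$, this says precisely that $\Spc(\catK)$ is Noetherian (and, as flagged after \Cref{cor:noetherianrealizing}, gives no information about $\Spc(\catK)^\vee$).

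For part (2) the extra hypotheses — $\catK=\catT^c$ for a rigidly-compactly-generated $\catT$, and $(X,\sigma)$ an \emph{extended} weak support datum for $\catT$ — are used only to upgrade faithfulness to injectivity, after which part (1) applies verbatim. The step is to show $\sigma$ is order-reflecting: given $x,y\in\catK$ with $\sigma(\langle x\rangle_s)\subseteq\sigma(\langle y\rangle_s)$, tensor $x$ with the Rickard idempotent $f_{\langle y\rangle}$ attached to $\langle y\rangle$ inside $\catT$; its support, by the extended support axioms, equals $\sigma(\langle x\rangle_s)\setminus\sigma(\langle y\rangle_s)=\emptyset$, so faithfulness forces $f_{\langle y\rangle}\otimes x=0$, whence $x\in\langle y\rangle=\langle y\rangle_s$ (rigidity makes every thick $\otimes$-ideal semiprime, cf.\ \Cref{cor:semiprimeimpliescompactdetection}). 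Order-reflection is equivalent to injectivity of $\sigma$.

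The bulk of this is bookkeeping through the open/closed and opposite-lattice dictionary. The one genuinely substantive point is the faithful-implies-injective passage in part (2): this is where the ``small'' framework does not suffice and one must invoke the Rickard idempotents and (co)localization sequences of the big category $\catT$ — precisely the phenomenon discussed in \Cref{rmk:big_small}. I expect this to be the main obstacle to a self-contained argument, but in the present context it is the standard one of \cite[Lemma 3.5]{Del10} and \cite[Theorem 5.3.1]{NVY22}, adapted from the radical/thick to the semiprime setting at no cost under rigidity.
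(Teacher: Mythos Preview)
Your proposal is correct and follows essentially the same approach as the paper: translate the weak support datum into an open support datum on $t_s(\catK)^{\op}$ via complementation, invoke \Cref{prop:classifying} and \Cref{cor:noetherianrealizing} for part \ref{item:classifyingsupportdatum1} and the Noetherian addendum, and for part \ref{item:classifyingsupportdatum2} upgrade faithfulness to injectivity by appealing to \cite[Theorem 5.3.1]{NVY22}. The paper's proof simply cites the latter black-box rather than sketching the Rickard-idempotent argument as you do, but the strategy is identical.
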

    \begin{proof}
        We have the identification $\Spc(\catK) = \Spc(t_s(\catK))^\vee = \Spc(t_s(\catK)^{\on{op}})$ arising from the identification $T_s(\catK) = \Id(t_s(\catK))$ of \Cref{prop:dist_lattice=coherent_frame} and \Cref{thm:distlattice}.
        As any weak support datum for $\catK$ is the equivalent to an open support datum for $t_s(\catK)^{\on{op}}$, as noted above, \ref{item:classifyingsupportdatum1} is a direct translation of \Cref{prop:classifying}.
        For \ref{item:classifyingsupportdatum2} one may apply \cite[Theorem 5.3.1]{NVY22}, which asserts that faithfulness and injectivity are equivalent. 
        The final statement arises from \Cref{cor:noetherianrealizing}.
    \end{proof}

    Again, we stress that there is no relation between $\Spc(\catK)$ Noetherian and $\Spc(\catK)^\vee$ Noetherian without additional assumptions. This fact also should be noted for Barthel's adaptation of Cohen's theorem, which also applies to the noncommutative setting. 

    \begin{corollary}\label{cor:Barthel_translated}
        Let $\catK$ be a monoidal-triangulated category.
        Then $\Spc(\catK)$ is weakly Noetherian and every semiprime thick $\otimes$-ideal is principal if and only if $\Spc(\catK)$ is finite. Moreover, $\Spc(\catK)^\nu$ is Noetherian if and only if every semiprime thick $\otimes$-ideal is principal. If any of the above conditions hold, $\catK$ satisfies principal closure and compact detection. 
    \end{corollary}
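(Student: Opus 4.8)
The plan is to funnel everything through the natural isomorphism $T_s(\catK)\cong\Omega(\Spc(\catK)^\nu)$ of \Cref{cor:classification_for_spatial_frames} and then to apply the lattice-theoretic Cohen theorem \Cref{thm:barthelthm} (and the proposition preceding it) to the bounded distributive lattice $t_s(\catK)$. The overarching strategy is: first show that \emph{each} of the listed conditions forces $\catK$ to have principal closure and compact detection; then, with $\catK$ in the coherent world, $T_s(\catK)=\Id(t_s(\catK))$ with $t_s(\catK)=T_s(\catK)^c$ by \Cref{thm:distlattice}, and $\Spc(\catK)^\nu=\Spc(t_s(\catK))$, $\Spc(\catK)=\Spc(t_s(\catK))^\vee$ by \Cref{cor:ncspcspectralspace}; finally, \Cref{thm:barthelthm} and its predecessor, applied with $\catL=t_s(\catK)$, deliver the two asserted equivalences once one observes that ``every ideal of $t_s(\catK)$ is principal'' is, via compact detection, exactly ``every semiprime thick $\otimes$-ideal of $\catK$ is principal.''

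The easy half of the reduction: if $\Spc(\catK)^\nu$ is Noetherian, then $\Omega(\Spc(\catK)^\nu)\cong T_s(\catK)$ satisfies the ascending chain condition, so every semiprime thick $\otimes$-ideal is a compact element of $T_s(\catK)$; by \Cref{lem:compacts_principal} it is then principal, whence principal closure, and by \Cref{cor:noncompacts} each $\langle x\rangle_s$ being compact gives that $x$ has compact detection. The same conclusion follows if $\Spc(\catK)$ (equivalently $\Spc(\catK)^\nu$) is finite. This also settles the easy direction ``$\Spc(\catK)^\nu$ Noetherian $\Rightarrow$ every semiprime thick $\otimes$-ideal is principal'' of the ``Moreover.''

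The hard half, and what I expect to be the main obstacle, is the converse: \emph{every semiprime thick $\otimes$-ideal is principal $\Rightarrow$ $\catK$ has compact detection}. Principal closure is free ($\langle x\rangle_s\cap\langle y\rangle_s$ is a semiprime ideal, hence principal). For compact detection, fix $x$ and use $\langle x\rangle_s=\langle x\otimes\catK\otimes x\rangle_s$ (the case $y=x$ of $\langle x\rangle_s\cap\langle y\rangle_s=\langle x\otimes\catK\otimes y\rangle_s$), so that $\langle x\rangle_s=\bigvee_{z\in\catK}\langle x\otimes z\otimes x\rangle_s$, a directed join (directedness via $z\mapsto z\oplus z'$ and $\langle a\oplus b\rangle_s=\langle a\rangle_s\vee\langle b\rangle_s$ from \Cref{lem:distributivityofprincipals}). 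One must show this directed join is attained, i.e. that $\langle x\rangle_s$ is compact; a witness $s_x$ is then the sum of the finitely many relevant $z$'s. Concretely, via prime-avoidance (\cite[Lemma A.1.1]{NVY23}, as in \Cref{lemma:compact}) this amounts to producing a \emph{single} $s_x$ with $x\in\langle x\otimes s_x\otimes x\rangle_s$, knowing only that $x$ lies in the semiprime hull of the \emph{finitary} ordinary $\otimes$-ideal $\langle x\otimes\catK\otimes x\rangle$; the hypothesis that $\langle x\otimes\catK\otimes x\rangle_s$ is singly generated is what should force this uniformization. (This genuinely uses the object-level finitary structure: no purely frame-theoretic argument can work, since $t_s(\catK)=T_s(\catK)$ alone does not make $T_s(\catK)$ coherent.) Once compact detection holds, \Cref{prop:characterizingcompactelts} gives $t_s(\catK)=T_s(\catK)^c$ and \Cref{lem:distributivitygivenprincipalclosure} makes $t_s(\catK)$ bounded distributive, completing the reduction.

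With principal closure and compact detection in hand under any of the hypotheses, the proposition preceding \Cref{thm:barthelthm} gives ``$\Spc(\catK)^\nu=\Spc(t_s(\catK))$ Noetherian $\iff$ every ideal of $t_s(\catK)$, equivalently every semiprime thick $\otimes$-ideal, is principal,'' and \Cref{thm:barthelthm} gives ``$\Spc(\catK)=\Spc(t_s(\catK))^\vee$ is finite $\iff$ it is weakly Noetherian and every such ideal is principal.'' The final sentence of the statement is then the bookkeeping already carried out in the reduction step.
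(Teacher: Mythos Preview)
Your overall architecture coincides with the paper's: reduce to the lattice-theoretic Cohen theorem by first establishing principal closure and compact detection under each hypothesis, then identify $\Spc(\catK)^\nu=\Spc(t_s(\catK))$ and $\Spc(\catK)=\Spc(t_s(\catK))^\vee$ and invoke \Cref{thm:barthelthm} and the proposition before it. The easy direction of the ``Moreover'' and the finite case are handled the same way in both.

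The one substantive divergence is the step ``every semiprime is principal $\Rightarrow$ compact detection.'' The paper dispatches this in one line: ``as every semiprime $\otimes$-ideal is finitely generated every semiprime $\otimes$-ideal is compact, so compact detection holds.'' You, by contrast, flag it as the main obstacle and sketch an approach via the directed join $\langle x\rangle_s=\bigvee_z\langle x\otimes z\otimes x\rangle_s$ and prime-avoidance. However, your sketch does not actually close the gap: you write that ``the hypothesis that $\langle x\otimes\catK\otimes x\rangle_s$ is singly generated is what should force this uniformization,'' but $\langle x\otimes\catK\otimes x\rangle_s=\langle x\rangle_s$ is \emph{always} singly generated, so this observation does not use the hypothesis at all. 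Your parenthetical remark that no purely frame-theoretic argument suffices is correct, but you have not supplied the object-level argument either. So on this point you have identified a step that the paper asserts without detailed justification, but your own treatment is no more complete---you have reformulated the problem rather than solved it. Everything else in your plan is essentially identical to the paper's proof.
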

    \begin{proof}
        The first statement follows immediately from \Cref{thm:barthelthm}, once we verify that either condition implies principal closure and compact detection, as then $\Spc(\catK)=\Spc(\catL)^\vee$ for the bounded distributive lattice $\catL=t_s(\catK)$. 
        \begin{itemize}
            \item  If every semiprime thick $\otimes$-ideal is principal, then it immediately follows that $t_s(\catK) = T_s(\catK)$.
            Hence $t_s(\catK)$ is clearly a lattice, i.e.,\ principal closure holds.
            Moreover, as every semiprime $\otimes$-ideal is finitely generated every semiprime $\otimes$-ideal is compact, so compact detection holds.
            \item If $\Spc(\catK)$ is finite, then $\Spc(\catK)^\vee$ is also finite and so Noetherian.
            Hence, $\catK$ satisfies principal closure by \Cref{ex:princ_clos}\ref{item:princ_clos_pt_Noeth} and, as $T_s(\catK)$ is necessarily finite, every element is compact so compact detection holds.
        \end{itemize}
        
        For the second statement, if $\Spc(\catK)^\nu$ is Noetherian, every element of $\Omega(\Spc(\catK)^\nu)\allowbreak=T_s(\catK)$ is compact. Hence, by \Cref{lem:compacts_principal} any semiprime thick $\otimes$-ideal is principal.
        Conversely, if every semiprime thick $\otimes$-ideal is principal, the first part showed that $\catK$ satisfies compact detection.
        Thus $t_s(\catK)=T_s(\catK)=\Omega(\Spc(\catK)^\vee)$ consists entirely of compact elements, so necessarily $\Spc(\catK)^\nu$ is Noetherian (use e.g.,\ \cite[Proposition 7.13]{BF11}).
    \end{proof}

    In particular, $\Spc(\catK)^\nu$ is rather complicated: as soon as $\Spc(\catK)$ has non-principal primes (which happens in practice most of the time), $\Spc(\catK)^\nu$ is non-Noetherian.

    \subsection{A note on functoriality} 

    Stone duality is an antiequivalence, hence for every homomorphism of coherent frames $f\colon  \catF \to \catF'$ one obtains a corresponding continuous homomorphism on the space of points $f^*: \pt(\catF') \to \pt(\catF)$, and vice versa. Similarly, given any morphism of bounded distributive lattices $l: \catL \to \catL'$, one obtains a corresponding morphism on their ideal lattices $\Id{(l)}: \Id(\catL) \to \Id(\catL')$, and conversely if one has a compact-preserving morphism of coherent frames $f\colon  \catF \to \catF'$, it restricts to a morphism on their compact elements, $f^c: \catF^c \to (\catF')^c$, necessarily a morphism of bounded distributive lattices. 

    The reason why functoriality generally fails for $\Spc(-)^\nu$ is clear: an exact monoidal functor $F\colon  \catK \to \catK'$ may not yield a meet preserving map on lattices. 
    Any such functor induces induces a map of \textit{posets} $\hat{F}: T_s(\catK) \to T_s(\catK'),\ \catI \mapsto \langle F(\catI)\rangle_s$ which respects joins.
    However without strong hypotheses, such as $T_s(\catK') = T_r(\catK')$ (i.e., $\catK'$ satisfies the tensor product property) or $F$ essentially surjective, $\hat{F}$ may in general fail to preserve meets. 
    The same remains true for the restricted morphisms $\hat{F}: t_s(\catK) \to t_s(\catK')$, even when $\catK$ and $\catK'$ are principally closure.
    Hence for functoriality, it seems the lattice perspective offers little more than what was already deduced in \cite[Section 5]{Mil25}.
    
    \section{Examples}\label{sec:ex}
    \subsection{(Weak) central generation implies principal closure}\label{sec:centralgeneration}
    
    We consider the (weak) \textit{central generation hypothesis} considered in \cite{NP23} for stable module categories and \cite{NVY25} in greater generality, and show that if $\catK$ is such a monoidal-triangulated category with compact detection (e.g., $\catK$ is rigid or half-rigid), then $\catK$ is principally closed. 

    \begin{definition}
        Let $\catK$ be a monoidal-triangulated category.
        \begin{enumerate}
            \item We say $\catK$ satisfies the \textit{central generation hypothesis} if all thick $\otimes$-ideals of $\catK$ are generated by collections of central elements, i.e., elements $x \in \catK$ such that $x \otimes y \cong y \otimes x$ (possibly non-functorially) for all objects $y \in \catK$. If $\catT$ is a `big', i.e.,\ rigidly-compactly-generated, monoidal-triangulated category, we say $\catT$ satisfies the central generation hypothesis if its compact part $\catT^c$ does.
            \item We say $\catK$ satisfies the \textit{weak central generation hypothesis} if there exists a collection of objects $I \subseteq \on{Ob}(\catK)$ thickly generating $\catK$ for which all thick $\otimes$-ideals of $\catK$ are generated by collections of elements which centralize $I$ (possibly non-functorially). 
            We refer to $I$ as a \textit{centralizing set}.
            Of course, taking $I = \Ob(\catK)$ for $\catK$ satisfying the central generation hypothesis shows it also satisfies weak central generation hypothesis. Again, if $\catT$ is a big monoidal-triangulated category, we say $\catT$ satisfies the weak central generation hypothesis if $\catT^c$ does. 
        \end{enumerate}
    \end{definition}

    \begin{example}\label{ex:centgencats}
        Negron-Pevtsova consider a specific example of weak central generation restricted to stable module categories in \cite{NP23}. In this case, they consider objects which centralize the simple objects of the corresponding module category. The authors verify weak central generation \cite[Section 8]{NP23} for the following stable module categories:
        
        \begin{enumerate}
            \item Bosonized quantum complete intersections;
            \item Small quantum Borels $u_q(B)$ in type A, at arbitrary odd order $q$;
            \item Drinfeld double $\calD(B_{(1)})$ for $B$ a Borel in an almost simple algebra group $\bbG$ over $\overline{\bbF}_p$ at arbitrary $p$. 
        \end{enumerate}

        (Strong) central generation plays a key role in Nakano-Vashaw-Yakimov's extension of the homological spectrum, first constructed by Balmer in the symmetric case \cite{Bal20}, to the noncommutative setting \cite{NVY25}. In this case, if $\catK$ satisfies central generation, then the so-called ``homological primes'' of $\catK$ coincide with Balmer's original notion. Additionally, one may translate \cite[Lemmata 6.6, 6.7]{NP23} over to the more general monoidal-triangulated category setting and verify they hold - in particular, $\Spc(\catK)$ is multiplicative in the sense of \cite{NP23}.
    \end{example}

    Under the (weak) central generation hypothesis, $\catK$ is principally closed. 

    \begin{proposition}\label{thm:weak_central_gen_principal}
        Suppose $\catK$ satisfies weak central generation with centralizing set $I \subseteq \Ob(\catK)$ and has compact detection (e.g.,\ is rigid). Then every principal thick $\otimes$-ideal of $\catK$ is generated by an element centralizing $I$. Consequently, $\catK$ is principally closed. 
    \end{proposition}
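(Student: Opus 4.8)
The plan is to upgrade the (possibly infinite) centralizing generating set supplied by weak central generation to a \emph{single} centralizing generator, with compact detection controlling the passage to a finite sub-collection; principal closure will then drop out of the formula $\langle x\rangle_s\cap\langle y\rangle_s=\langle x\otimes\catK\otimes y\rangle_s$.

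First I would fix $x\in\catK$. By weak central generation the thick $\otimes$-ideal $\langle x\rangle_s$ is generated, as a thick $\otimes$-ideal, by a collection $\{c_j\}_{j\in J}$ of elements centralizing $I$; since $\langle x\rangle_s$ is semiprime this gives $\langle x\rangle_s=\langle\{c_j\}_{j\in J}\rangle_s$, and in particular $x\in\langle\{c_j\}_{j\in J}\rangle_s$. Now \Cref{lemma:compact}---this is the one place compact detection is used---produces a finite subset $J'\subseteq J$ with $x\in\langle\{c_j\}_{j\in J'}\rangle_s$. Put $c:=\bigoplus_{j\in J'}c_j$. A finite direct sum of elements centralizing $I$ again centralizes $I$, since tensoring is additive and the centralizing isomorphism of a direct sum is the direct sum of the centralizing isomorphisms; so $c$ centralizes $I$. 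By \Cref{lem:distributivityofprincipals} one has $\langle\{c_j\}_{j\in J'}\rangle_s=\bigvee_{j\in J'}\langle c_j\rangle_s=\langle c\rangle_s$, whence $\langle x\rangle_s\subseteq\langle c\rangle_s$; and conversely each $c_j$ with $j\in J'$ lies in $\langle\{c_j\}_{j\in J}\rangle_s=\langle x\rangle_s$, so $c\in\langle x\rangle_s$ (thick subcategories are closed under finite direct sums) and $\langle c\rangle_s\subseteq\langle x\rangle_s$. Hence $\langle x\rangle_s=\langle c\rangle_s$ with $c$ centralizing $I$, which is the first assertion.

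For principal closure I would, given $x,y\in\catK$, use the above to write $\langle x\rangle_s=\langle c\rangle_s$ and $\langle y\rangle_s=\langle d\rangle_s$ with $c,d$ centralizing $I$. Since $\langle c\rangle_s\cap\langle d\rangle_s=\langle c\otimes\catK\otimes d\rangle_s$, it suffices to show $\langle c\otimes\catK\otimes d\rangle_s=\langle c\otimes d\rangle_s$. The inclusion $\supseteq$ is immediate from $c\otimes d\cong c\otimes\bbone\otimes d\in c\otimes\catK\otimes d$. For $\subseteq$, set $\catC:=\{z\in\catK\mid c\otimes z\otimes d\in\langle c\otimes d\rangle\}$; this is a thick subcategory of $\catK$ because $z\mapsto c\otimes z\otimes d$ is a triangulated functor and $\langle c\otimes d\rangle$ is thick, and it contains $I$ because for $i\in I$ the centralizing isomorphism gives $c\otimes i\otimes d\cong i\otimes(c\otimes d)\in\langle c\otimes d\rangle$. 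As $I$ thickly generates $\catK$ we conclude $\catC=\catK$, i.e.\ $c\otimes\catK\otimes d\subseteq\langle c\otimes d\rangle$, so $\langle c\otimes\catK\otimes d\rangle_s\subseteq\langle c\otimes d\rangle_s$. Thus $\langle x\rangle_s\cap\langle y\rangle_s=\langle c\otimes d\rangle_s$ is principal, and $\catK$ is principally closed.

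The main obstacle is the reduction from an arbitrary centralizing generating set to one generator: for the ordinary thick $\otimes$-ideal $\langle x\rangle$ this would be automatic, since every object is built from finitely many generators, but the semiprime hull $\langle x\rangle_s$ can be far larger than anything finitely many of the $c_j$ generate, so compact detection (via \Cref{lemma:compact}) is genuinely needed. A secondary subtlety is that one should not expect $c$ to centralize all of $\catK$, only $I$; this is precisely why the last step replaces a direct manipulation of $c\otimes\catK\otimes d$ by the thick-subcategory argument built on the fact that $I$ is a thick generating set.
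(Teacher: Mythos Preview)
Your proof is correct and follows essentially the same approach as the paper: reduce to a single centralizing generator via compact detection (\Cref{lemma:compact}), then use centrality with respect to the thick generating set $I$ to collapse $\langle c\otimes\catK\otimes d\rangle_s$ to $\langle c\otimes d\rangle_s$. The paper compresses the second step into the chain $\langle x'\otimes\catK\otimes y'\rangle_s=\langle x'\otimes I\otimes y'\rangle_s=\langle x'\otimes y'\rangle_s$ without further comment, whereas you spell out the thick-subcategory argument explicitly; these are the same reasoning.
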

    \begin{proof}
        Let us first show that for any $x\in\catK$ there exists an $x' \in \catK$ central with respect to $I$ such that $\langle x \rangle_s = \langle x'\rangle_s$. We have by hypothesis that $\langle x \rangle_s = \langle S\rangle_s$ for some (possibly infinite) set $S \subseteq \Ob(\catK)$ central with respect to $I$. \Cref{lemma:compact} implies that there exists a finite subset $S' \subseteq S$ of central elements such that $\langle x \rangle_s = \langle S' \rangle_s$, and taking $x'$ to be the the sum of elements in $S'$ shows $\langle x\rangle_s = \langle x' \rangle_s$ with $x'$ central with respect to $I$. 
        
        Now, we have for any $x,y\in\catK$ by choosing $x',y'\in\catK$ central with respect to $I$ as above \[\langle x \rangle_s \cap \langle y \rangle_s = \langle x'\rangle_s \cap \langle y' \rangle_s = \langle \langle x' \rangle_s \otimes \langle y' \rangle_s\rangle_s = \langle x' \otimes \catK \otimes y' \rangle_s = \langle x' \otimes I \otimes y' \rangle_s= \langle x' \otimes y'\rangle_s.\] 
        Therefore $\catK$ is principally closed. 
    \end{proof}

    Given that central generation contributes to friendly behavior of monoidal-triangulated categories (see e.g., \cite[Theorem C]{NVY25}), it is a pertinent question to further understand this property. We note Negron-Pevtsova prove a crucial lemma for detecting weak central generation for finite tensor categories \cite[Lemma 8.10]{NP23}, which they utilize to show the examples in \Cref{ex:centgencats} are centrally generated. 

    \subsection{Revisiting crossed product categories}\label{sec:crossedproduct}

    Finally, we relate things back to crossed product categories (see \Cref{sec:groupactions1} for the first part of this discussion). 
    
    We note that if $\catI$ is a semiprime $G$-ideal of $\catK$, then $\catI \rtimes G$ is semiprime as well, however the converse direction can fail. 

    \begin{lemma}\label{lem:resp_semiprime}
        Every $G$-prime of $\catK$ is semiprime, i.e., an intersection of primes, if and only if the bijection between $G$-ideals of $\catK$ and $\otimes$-ideals of $\catK \rtimes G$ (see \cite[Proposition 5.4]{HV25}) restricts to a bijection between semiprime $G$-ideals and semiprime $\otimes$-ideals of $\catK \rtimes G$.
    \end{lemma}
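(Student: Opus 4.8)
The plan is to push everything through the order isomorphism $\catI\mapsto\catI\rtimes G$ between the $G$-ideals of $\catK$ and the thick $\otimes$-ideals of $\catK\rtimes G$ supplied by \cite[Proposition 5.4]{HV25}. This bijection is an isomorphism of complete lattices, so in particular it preserves arbitrary intersections, and by the same reference it restricts to the homeomorphism $G$-$\Spc(\catK)\cong\Spc(\catK\rtimes G)$; that is, it identifies the $G$-prime ideals of $\catK$ with the prime thick $\otimes$-ideals of $\catK\rtimes G$. Since a thick $\otimes$-ideal is semiprime exactly when it is an intersection of primes, the first step is to record the resulting equivalence: for a $G$-ideal $\catI$ of $\catK$, the ideal $\catI\rtimes G$ is semiprime in $\catK\rtimes G$ if and only if $\catI$ is an intersection of $G$-prime ideals of $\catK$ (apply the inverse of the lattice isomorphism to a presentation of $\catI\rtimes G$ as an intersection of primes, and conversely).

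Next I would make precise that a \emph{semiprime $G$-ideal} means a $G$-ideal which is semiprime as a thick $\otimes$-ideal of $\catK$, i.e.\ a $G$-stable intersection of prime thick $\otimes$-ideals of $\catK$. With this reading, the sentence preceding the lemma says exactly that $\catI\rtimes G$ is semiprime whenever $\catI$ is a semiprime $G$-ideal (alternatively this follows from the first step, since $\bigcap_{g\in G}g(\catP)$ is a $G$-prime whenever $\catP$ is prime, by a one-line orbit argument). Hence $\catI\mapsto\catI\rtimes G$ always restricts to an \emph{injection} from the semiprime $G$-ideals into the semiprime thick $\otimes$-ideals of $\catK\rtimes G$, so ``the bijection restricts to a bijection between semiprime $G$-ideals and semiprime $\otimes$-ideals'' is equivalent to surjectivity of this injection; by the equivalence of the first step this says precisely that every $G$-ideal which is an intersection of $G$-primes is in fact an intersection of primes of $\catK$.

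It then remains to see that this last condition is equivalent to ``every $G$-prime of $\catK$ is semiprime''. One implication is the special case of a single $G$-prime; for the other, if each $G$-prime $\catQ_i$ equals an intersection $\bigcap_j\catP_{ij}$ of primes of $\catK$, then $\bigcap_i\catQ_i=\bigcap_{i,j}\catP_{ij}$ is again such an intersection. Unwinding the two directions of the lemma: if every $G$-prime is semiprime, then given a semiprime $\catJ=\catI\rtimes G$ of $\catK\rtimes G$, writing $\catI$ as an intersection of $G$-primes and each of those as an intersection of primes of $\catK$ exhibits $\catI$ as a semiprime $G$-ideal, so the restriction is onto; conversely, if the restriction is a bijection, then for any $G$-prime $\catQ$ the ideal $\catQ\rtimes G$ is prime, hence semiprime, hence of the form $\catI\rtimes G$ for a semiprime $G$-ideal $\catI$, and injectivity of the big bijection forces $\catI=\catQ$, so $\catQ$ is semiprime.

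I expect no serious obstacle: preservation of intersections and the matching of $G$-primes with primes of $\catK\rtimes G$ are immediate from \cite[Proposition 5.4]{HV25}, and the $G$-orbit argument is routine. The one point needing care is keeping the \emph{intrinsic} meaning of ``semiprime $G$-ideal'' (semiprime as an ideal of $\catK$, together with $G$-stability) distinct from ``intersection of $G$-primes''; it is exactly the gap between these two notions that the hypothesis of the lemma is designed to close.
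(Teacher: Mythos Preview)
Your proposal is correct and follows essentially the same route as the paper: both arguments push the question through the lattice isomorphism of \cite[Proposition 5.4]{HV25}, observe that it matches $G$-primes with primes of $\catK\rtimes G$ and hence intersections of $G$-primes with semiprime ideals of $\catK\rtimes G$, and then reduce the statement to checking that ``intersection of $G$-primes'' coincides with ``semiprime $G$-ideal of $\catK$'' precisely when every $G$-prime is semiprime. The only cosmetic difference is that the paper phrases this purely lattice-theoretically (meets of meet-primes are automatically preserved by any lattice isomorphism), whereas you supply the concrete orbit argument that $\bigcap_{g\in G}g(\catP)$ is $G$-prime to explain why one inclusion always holds; this is a harmless addition, not a different strategy.
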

    \begin{proof}
        As we have a lattice isomorphism between the $G$-ideals of $\catK$ and the  $\otimes$-ideals of $\catK \rtimes G$, it necessarily restricts to an isomorphism between those elements that are meets of meet-prime elements, i.e., the semiprime elements of the lattice. For $\catK \rtimes G$, these are precisely the semiprime thick $\otimes$-ideals, so it suffices to show that every $G$-prime of $\catK$ is semiprime if and only if every semiprime element in the lattice of $G$-ideals of $\catK$ is a semiprime thick $\otimes$-ideal. If every $G$-prime of $\catK$ is semiprime, then since every semiprime object in the lattice is an intersection of $G$-primes, every such object is also an intersection of honest primes, and so a semiprime thick $\otimes$-ideals. Conversely, if there exists a $G$-prime of $\catK$ not a semiprime thick $\otimes$-ideal, then, as it is trivially a semiprime element of the lattice, there exists such an element not a semiprime thick  $\otimes$-ideal. 
    \end{proof}

    Of course, the conditions in \Cref{lem:resp_semiprime} are trivially satisfied when all ideals of $\catK$ are semiprime, e.g., if $\catK$ is half-rigid. \cite[Proposition 7.2]{HV25} prove a stronger condition under stronger hypotheses; the authors show every $G$-prime is an intersection of a $G$-orbit of a prime, assuming $\Spc(\catK)$ is Noetherian and $\catK$ is half-rigid. 

    With the above property for $G$-primes, $\catK \rtimes G$ automatically inherits compact detection from $\catK$. 
    
    \begin{proposition}
        Let $\catK$ be a monoidal-triangulated category and let $G$ be a group acting on $\catK$ via autoequivalences. If all $G$-primes of $\catK$ are semiprime and $\catK$ has compact detection, then $\catK \rtimes G$ has compact detection.
    \end{proposition}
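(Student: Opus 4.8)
The plan is to produce, for each object $z\in\catK\rtimes G$, an explicit object $s_z\in\catK\rtimes G$ witnessing compact detection for $z$, using the hypothesis only through the dictionary between semiprime ideals on the two sides. Fix $z$ and write $z\cong\bigoplus_{g\in S}z_g\boxtimes g$ for a finite subset $S\subseteq G$ and objects $z_g\in\catK$. By \cite[Proposition 5.4]{HV25} the assignment $\catI\mapsto\catI\rtimes G$ is a lattice isomorphism from the $G$-ideals of $\catK$ onto the thick $\otimes$-ideals of $\catK\rtimes G$; being a lattice isomorphism it identifies semiprime elements, so every semiprime thick $\otimes$-ideal $\catJ$ of $\catK\rtimes G$ is of the form $\catI\rtimes G$ for a unique semiprime element $\catI$ of the lattice of $G$-ideals, i.e.\ a meet of $G$-primes. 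As recorded in the proof of \Cref{lem:resp_semiprime}, the hypothesis that every $G$-prime of $\catK$ is semiprime forces each such $\catI$ to be an honest semiprime thick $\otimes$-ideal of $\catK$. Finally, an object of $\catK\rtimes G$ lies in $\catI\rtimes G$ precisely when all of its homogeneous components lie in $\catI$, so $z\in\catJ$ if and only if $z_g\in\catI$ for every $g\in S$.

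For each $g\in S$ I would choose, using that $\catK$ has compact detection, an object $s_g\in\catK$ witnessing compact detection for $z_g$ in $\catK$, and then set
\[
s_z:=\bigoplus_{g\in S}g^{-1}(s_g)\boxtimes g^{-1}\ \in\ \catK\rtimes G.
\]
The claim is that $s_z$ works. The implication $z\in\catJ\Rightarrow z\otimes s_z\otimes z\in\catJ$ is immediate since $\catJ$ is a $\otimes$-ideal. For the converse, the twisted product formula $(a\boxtimes g)\otimes(b\boxtimes h)\otimes(c\boxtimes k)\cong\bigl(a\otimes g(b)\otimes gh(c)\bigr)\boxtimes ghk$ exhibits $z\otimes s_z\otimes z$ as a finite direct sum of objects $z_{g_1}\otimes g_1\bigl(g_2^{-1}(s_{g_2})\bigr)\otimes g_1g_2^{-1}(z_{g_3})$ indexed by $(g_1,g_2,g_3)\in S^3$; if $z\otimes s_z\otimes z\in\catJ$ then, reading off homogeneous components, each of these summands lies in $\catI$. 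For a given $\gamma\in S$ the ``diagonal'' choice $g_1=g_2=g_3=\gamma$ makes the group twists cancel, so the corresponding summand is simply $z_\gamma\otimes s_\gamma\otimes z_\gamma\in\catI$; since $\catI$ is a genuine semiprime thick $\otimes$-ideal of $\catK$ and $s_\gamma$ witnesses compact detection for $z_\gamma$, this gives $z_\gamma\in\catI$. As $\gamma\in S$ was arbitrary we get $z\in\catJ$, proving the claim, and since $z$ was arbitrary $\catK\rtimes G$ has compact detection.

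The step that is more than bookkeeping is the choice of $s_z$. The naive guess $s_z=s_w\boxtimes e$, with $w:=\bigoplus_{g\in S}z_g$ and $s_w$ a witness for $w$ in $\catK$, fails: after using $G$-stability of $\catI$, the off-diagonal summands of $z\otimes s_z\otimes z$ become twisted objects of the form $g_1^{-1}(z_{g_1})\otimes s_w\otimes z_{g_2}$, and a single $s_w$ has no reason to detect these, so one cannot recover $w\otimes s_w\otimes w$. Spreading the witness over the group as $\bigoplus_{g}g^{-1}(s_g)\boxtimes g^{-1}$ is precisely engineered so that the diagonal indices reconstitute the untwisted products $z_\gamma\otimes s_\gamma\otimes z_\gamma$, after which everything closes routinely. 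The hypothesis that all $G$-primes of $\catK$ are semiprime enters in exactly one essential place: it guarantees that the $G$-ideal $\catI$ attached to a semiprime thick $\otimes$-ideal of $\catK\rtimes G$ is itself a semiprime thick $\otimes$-ideal of $\catK$, which is what licenses the appeal to compact detection in $\catK$.
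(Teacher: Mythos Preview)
Your proof is correct and follows essentially the same approach as the paper: the key witness $g^{-1}(s_g)\boxtimes g^{-1}$ for a homogeneous object and the appeal to \Cref{lem:resp_semiprime} are identical. The only difference is that the paper treats just a single homogeneous object $x\boxtimes g$ and leaves the passage to arbitrary finite direct sums implicit, whereas you spell out the direct-sum case by taking $s_z=\bigoplus_{g\in S}g^{-1}(s_g)\boxtimes g^{-1}$ and extracting the diagonal summands $z_\gamma\otimes s_\gamma\otimes z_\gamma$; this is exactly the direct sum of the paper's witnesses, so your argument is a slightly more explicit version of theirs rather than a different route.
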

    \begin{proof}
        Let $x \in \catK$ and $g \in G$, and suppose $s_x$ witnesses compact detection for $x$. We claim $s_{x \boxtimes g} := g\inv(s_x) \boxtimes g\inv$ witnesses compact detection for $x \boxtimes g$. Indeed, first note that \[(x \boxtimes g ) \otimes (g\inv(s_x) \boxtimes g\inv) \otimes (x\boxtimes g) \cong (x \otimes s_x \otimes x)\boxtimes g.\] 
        Moreover, by \Cref{lem:resp_semiprime}, any semiprime thick $\otimes$-ideal $\catJ = \catI \rtimes G$ of $\catK \rtimes G$ satisfies that $\catI \subseteq \catK$ is semiprime as well. Therefore,  $x \boxtimes g \in \catJ$ if and only if $x \in \catI$ if and only if $x \otimes s_x \otimes x \in \catI$ if and only if $(x \otimes s_x \otimes x)\boxtimes g \in \catJ$, as desired. 
    \end{proof}

    \cite[Example 7.7]{HV25} demonstrates that a crossed product category formed from a monoidal-triangulated category $\catK$ satisfying principal closure need not satisfy principal closure again. However, if $G$ is finite, this can be salvaged.

    \begin{proposition}\label{prop:principalclosureforcrosseds}
        Let $\catK$ be a monoidal-triangulated category and let $G$ be a finite group acting on $\catK$ via autoequivalences. If all $G$-primes of $\catK$ are semiprime and $\catK$ satisfies principal closure, then so does $\catK \rtimes G$. 
    \end{proposition}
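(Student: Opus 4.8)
The plan is to reduce principal closure for $\catK\rtimes G$ to that of $\catK$ via the dictionary of \cite[Proposition 5.4]{HV25}, using two preliminary simplifications and one key orbit-wise computation.

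\emph{Reduction to ``diagonal'' objects.} For $g\in G$ the object $\bbone\boxtimes g$ is $\otimes$-invertible in $\catK\rtimes G$ (with inverse $\bbone\boxtimes g\inv$), and $x\boxtimes g=(x\boxtimes e)\otimes(\bbone\boxtimes g)$, so $\langle x\boxtimes g\rangle=\langle x\boxtimes e\rangle$ and hence $\langle x\boxtimes g\rangle_s=\langle x\boxtimes e\rangle_s$, where $e\in G$ is the identity. Since every object of $\catK\rtimes G$ is a finite direct sum of objects $x\boxtimes g$, and $\langle\bigoplus_i u_i\rangle_s=\bigvee_i\langle u_i\rangle_s$ by \Cref{lem:distributivityofprincipals} while $T_s(\catK\rtimes G)$ is distributive by \Cref{cor:thicksarespatialframe}, it suffices to show $\langle x\boxtimes e\rangle_s\cap\langle y\boxtimes e\rangle_s$ is principal for all $x,y\in\catK$; the general case then follows by writing each factor as a finite join of such principals, distributing, and noting that a finite join of principals is principal.

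\emph{Orbit-wise description of $\langle z\boxtimes e\rangle_s$.} Let $\phi$ denote the order isomorphism of \cite[Proposition 5.4]{HV25} between the $G$-ideals of $\catK$ and the thick $\otimes$-ideals of $\catK\rtimes G$, which identifies $G$-primes with primes (and, by \Cref{lem:resp_semiprime} together with our hypothesis, semiprime $G$-ideals with semiprime thick $\otimes$-ideals). For $z\in\catK$ the smallest $G$-ideal containing $z$ is the thick $\otimes$-ideal $\langle G\cdot z\rangle$ generated by the orbit $G\cdot z=\{g(z):g\in G\}$, and one checks $\langle z\boxtimes e\rangle=\phi(\langle G\cdot z\rangle)$; taking semiprime hulls, $\langle z\boxtimes e\rangle_s=\phi\big(\bigcap\{\catQ\mid\catQ\ \text{a $G$-prime of $\catK$},\ z\in\catQ\}\big)$. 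I claim this intersection coincides with the semiprime hull $\bigvee_{g\in G}\langle g(z)\rangle_s$ computed in the frame $T_s(\catK)$, i.e.\ with $\bigcap\{\catP\text{ prime of }\catK\mid G\cdot z\subseteq\catP\}$. For ``$\subseteq$'': for a prime $\catP\supseteq\langle G\cdot z\rangle$ the orbit intersection $\bigcap_{g\in G}g(\catP)$ is again a $G$-prime containing $z$ — using that $\langle G\cdot z\rangle$ is $G$-stable, that each $g\inv$ is monoidal, and primality of $\catP$ — so $\bigcap_{\catQ\ni z}\catQ\subseteq\catP$, whence the inclusion. For ``$\supseteq$'', where the hypothesis is essential: a $G$-prime $\catQ\ni z$ is by assumption an intersection of primes $\catP_i$, each containing the $G$-stable ideal $\langle G\cdot z\rangle$, so $\catP_i\supseteq\bigcap\{\catP\mid G\cdot z\subseteq\catP\}$ and hence $\catQ\supseteq\bigcap\{\catP\mid G\cdot z\subseteq\catP\}$. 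Thus $\langle z\boxtimes e\rangle_s=\phi\big(\bigvee_{g\in G}\langle g(z)\rangle_s\big)$. Applying this to $x$ and $y$, using that $\phi$ preserves finite meets and joins together with distributivity in $T_s(\catK)$,
\[
\langle x\boxtimes e\rangle_s\cap\langle y\boxtimes e\rangle_s=\phi\Big(\big(\textstyle\bigvee_{g}\langle g(x)\rangle_s\big)\wedge\big(\textstyle\bigvee_{h}\langle h(y)\rangle_s\big)\Big)=\phi\Big(\textstyle\bigvee_{g,h}\big(\langle g(x)\rangle_s\cap\langle h(y)\rangle_s\big)\Big).
\]
Principal closure of $\catK$ gives $w_{g,h}\in\catK$ with $\langle g(x)\rangle_s\cap\langle h(y)\rangle_s=\langle w_{g,h}\rangle_s$, and since $G$ is finite $w:=\bigoplus_{(g,h)\in G\times G}w_{g,h}$ is an object of $\catK$ with $\langle w\rangle_s=\bigvee_{g,h}\langle w_{g,h}\rangle_s$. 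This $\langle w\rangle_s=\phi\inv(\langle x\boxtimes e\rangle_s\cap\langle y\boxtimes e\rangle_s)$ is a $G$-ideal, hence $G$-stable, so it equals $\langle G\cdot w\rangle_s$; applying the orbit-wise description once more yields $\langle x\boxtimes e\rangle_s\cap\langle y\boxtimes e\rangle_s=\phi(\langle G\cdot w\rangle_s)=\langle w\boxtimes e\rangle_s$, which is principal.

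The main obstacle is the ``$\supseteq$'' half of the orbit-wise description: it is exactly here that the standing hypothesis that every $G$-prime of $\catK$ is semiprime cannot be dispensed with, and without it $\phi$ fails to match semiprimes with semiprimes — which is why \cite[Example 7.7]{HV25} is a counterexample in its absence. Finiteness of $G$ is used only to guarantee that the witness $w$ is a genuine object. Everything else is bookkeeping, the one point needing a little care being that meets and joins of the relevant $G$-stable ideals may be computed interchangeably in the lattice of semiprime $G$-ideals, in $T_s(\catK)$, and, via $\phi$, in $T_s(\catK\rtimes G)$.
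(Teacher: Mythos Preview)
Your proof is correct and follows essentially the same strategy as the paper's: both transport the problem along the order isomorphism $\phi$ between semiprime $G$-ideals of $\catK$ and semiprime $\otimes$-ideals of $\catK\rtimes G$, and your orbit-wise identity $\langle z\boxtimes e\rangle_s=\phi(\langle G\cdot z\rangle_s)$ is exactly the content of the paper's observation that $\phi$ restricts to a bijection on \emph{principal} semiprimes. The paper's version is more streamlined---having noted that $\phi$ preserves principals and that $(\catI\rtimes G)\cap(\catJ\rtimes G)=(\catI\cap\catJ)\rtimes G$, principal closure of $\catK$ gives the result in one line, with no need for your distributivity-and-reassembly step---but the underlying argument is the same.

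One small correction to your closing commentary: the failure in \cite[Example 7.7]{HV25} is \emph{not} due to the $G$-prime hypothesis breaking (there $\catK$ is half-rigid, so all ideals and in particular all $G$-primes are semiprime), but rather to $G$ being infinite, so that the orbit sum you form for $w$ no longer lives in $\catK$. This is precisely the obstruction the paper identifies in the remark following the proposition.
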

    \begin{proof}
        In fact, the bijection between semiprime $G$-ideals of $\catK$ and semiprime ideals of $\catK\rtimes G$ from \Cref{lem:resp_semiprime} preserves principal ones.
        Indeed, if $\langle x\rangle_s$ is a $G$-ideal of $\catK$, then $\langle x\rangle_s \rtimes G$ is a principal semiprime ideal of $\catK \rtimes G$, with generator $x \boxtimes 1$. Conversely, if $\catI \rtimes G$ is semiprime and principally generated by $x_1 \boxtimes g_1 \oplus \cdots \oplus x_n \boxtimes g_n$, then $\catI$ is principally generated by \[x := \bigoplus_{i=1}^n \bigoplus_{g \in G} g(x_i).\] The result now follows since $(\catI \rtimes G) \cap (\catJ \rtimes G) = (\catI\cap \catJ) \rtimes G$. 
    \end{proof}

    \begin{remark}
        The key obstruction preventing this statement from holding in the infinite group case is that $\catK \rtimes G$ may have principal semiprime ideals $\catI \rtimes G$ for which $\catI$ may not be a principal semiprime $G$-ideal of $\catK$. Indeed, for any $x \in \catK$, the semiprime ideal $\catI = \langle g(x) \mid g \in G\rangle_s$ may not be a principal semiprime, but $\catI \rtimes G$ is principal and semiprime, generated by $x \boxtimes 1$. In this case, one cannot use principal closure of $\catK$ to deduce anything about intersections with this principal semiprime.
    \end{remark}

    \subsection{A question of Negron--Pevtsova}
    
    We conclude by answering a question regarding one-sided thick $\otimes$-ideals posed by Negron--Pevtsova in \cite{NP23}.
    Let $\catG$ denote a (generally non-connected) finite group scheme over a perfect field $k$. 
    Recall from loc.\ cit.\ that cohomological support for $\on{Coh}(\catG) := \on{mod}(k[\catG])$, where $k[\catG]$ denotes the coordinate algebra of $\catG$, produces one-sided (right) ideals in the stable category. 
    For any specialization-closed subset
    \[
    \Theta \subseteq \mathsf{Y} := \Proj(\Ext^\sbull_{\on{Coh}(\catG)}(\bbone, \bbone))
    \] 
    one has a corresponding ideal \[\catM_\Theta := \{V \in \stab(\on{Coh}(\catG)) \mid \supp^{coh}_\mathsf{Y}(V) \subseteq \Theta\}.\] This assignment produces an injective map $\catM$ from the set of specialization-closed subsets of $\mathsf{Y}$ to the set of thick one-sided $\otimes$-ideals in $\stab(\on{Coh}(\catG))$. 
    
    \begin{question}[{\cite[Question 11.1]{NP23}}]
        Is $\catM$ a bijection? That is, does cohomological support actually classify one-sided $\otimes$-thick ideals?
    \end{question}

    The answer is yes, as we now show. 
    We first give a general result on classifying one-sided ideals in crossed product categories $\catK \rtimes G$ for ``nice'' $\catK$.

    \begin{proposition}\label{prop:classificationofonesidedscrossed}
        Let $\catK$ be a monoidal-triangulated category  and let $G$ be a group acting on $\catK$ via autoequivalences.
        Furthermore, assume $\catK$ satisfies
        \begin{enumerate}
            \item all right thick $\otimes$-ideals are two-sided, and;
            \item all thick $\otimes$-ideals are semiprime.
        \end{enumerate} 
        Then, there is a bijection between the open subsets of $\Spc(\catK)^\nu$ and the thick right $\otimes$-ideals of $\catK \rtimes G$ induced by $U \mapsto \catI_U := \operatorname{add}\{x \boxtimes g \in \catK \rtimes G \mid \supp_\catK(x) \subseteq U\}$.
    \end{proposition}
    \begin{proof}
        By \Cref{cor:classification_for_spatial_frames} we have an isomorphism of lattices $\Omega(\Spc(\catK)^\nu) \cong T(\catK)$.
        Furthermore, \cite[Proposition 8.4(a)]{Mil25} shows that $\catI\mapsto \catI\rtimes G$ induces a bijection between $T(\catK)$ and the lattice of thick right $\otimes$-ideals of $\catK \rtimes G$.
        A straightforward verification shows that the combination of these isomorphisms is given as stated, i.e.\ $U\mapsto \{x \in \catK \mid \supp(x) \subseteq U\}\mapsto \catI_U \subseteq \catK \rtimes G$.
    \end{proof}
    \begin{remark}
        A similar result may be derived for left thick $\otimes$-ideals using \cite[Proposition 8.4(b)]{Mil25}; although the form of the induced bijection needs some slight modifying.
    \end{remark}
    
    To apply this proposition to the answer \cite[Question 11.1]{NP23} let us recall a few facts.
    There is an equivalence of categories $\stab(\on{Coh}(\catG)) \cong \stab(\on{Coh}(\catG_0)) \rtimes \pi$, see e.g.\ \cite[Section 9.3]{NVY24}, where $\catG_0$ is the identity component and $\pi = \catG_{red} \subseteq \catG$ is the (finite) \'{e}tale subgroup.
    If $k$ is algebraically closed, one can also take $\pi = \pi_0(\catG)$ the group of connected components. 
    Furthermore, by \cite[Theorem 10.3]{NP23} the cohomological support considered in loc.\ cit.\ is classifying and so corresponds under the above equivalences to the lattice-theoretic support.
    Furthermore, by \cite[Theorem 10.3]{NP23},  
    \[
       \Spc(\stab(\on{Coh}(\catG_0))) \cong \mathsf{Y}\quad\text{and}\quad \Spc(\stab(\on{Coh}(\catG))) \cong \mathsf{Y}/\pi,
    \]
    where for the former we used that $\Ext^\sbull_{\on{Coh}(\catG_0)}(\bbone, \bbone)\cong \Ext^\sbull_{\on{Coh}(\catG)}(\bbone, \bbone)$.

    \begin{corollary}
        Let $\catG$ be a finite group scheme.
        The map $\catM$ induces a bijection between specialization-closed subsets of $\mathsf{Y}$ and thick right $\otimes$-ideals of $\stab(\on{Coh}(\catG))$.
    \end{corollary}
    \begin{proof}
        Let $\catK:=\stab(\on{Coh}(\catG_0))$, then $\catK\rtimes \pi \cong \stab(\on{Coh}(\catG))$.
        Moreover, $\on{Coh}(\catG_0)$ is rigid and generated by the tensor unit, so all one-sided ideals are two-sided and semiprime.
        Therefore, by \Cref{prop:classificationofonesidedscrossed}, the thick right $\otimes$-ideals of $\stab(\on{Coh}(\catG))$ are classified by the open subsets of $\Spc(\catK)^\nu$ which are exactly the specialization-closed subsets of $\mathsf{Y}$.
        Indeed, $\Spc(\catK)=\mathsf{Y}$ and as $\Ext_{\on{Coh}(\catG)}^\sbull(\bbone,\bbone)$ is Noetherian, $\mathsf{Y}$ is Noetherian (hence spectral).
        Thus, the open subsets of $\Spc(\catK)^\nu$ are the specialization-closed subsets of $\Spc(\catK)$. It is clear, as the supports coincide, that this bijection is precisely the one induced by $\catM$. 
    \end{proof}

    \begin{remark}\label{rem:one-sided-class}
        The above classification is a highly exceptional case; the question of classifying one-sided thick $\otimes$-ideals from two-sided data (such as $\Spc(\catK)$) is unrealistic in general. In a forthcoming paper, the second author will give an example of a monoidal-triangulated category $\catK$ with $\Spc(\catK) = \{0\}$, but which has countably many left and right thick $\otimes$-ideals, all of which are meet-prime in their respective lattices.

        In general, the question of suitably classifying the one-sided ideals is difficult.
        If one asks for a classification by \textit{quasi-support data} $(X,\sigma)$, a reasonable notion of support data for right thick $\otimes$-ideals where one requires the weaker one-sided multiplicative property $\sigma(x\otimes y) \subseteq \sigma(x)$, the classification becomes `trivial' and so also useless.
        Indeed, \cite[Theorem 7.2]{Mil25} shows that the universal quasi-support datum is the space $\on{Sp}^r(\catK)$ consisting of \textit{all} right thick $\otimes$-ideals suitably topologized.
        This is analogous to what happens for the universal support of \cite{BO24} and can also be explained using results of \cite[Section 5]{GS23}.

        Let us spell this out a bit. 
        An (open) quasi-support datum corresponds to a poset map preserving arbitrary joins from the lattice of right $\otimes$-ideals, simply denoted $\catL$ here for ease, to the open subsets of some topological space $X$.
        Similarly as in \Cref{sec:supp_spatial} the universal support datum is given by the space $Y$ for which such maps $\catL\to \Omega(X)$ correspond to continuous maps $Y\to X$.
        Let $d$ denote the left adjoint to the forgetful functor $ \mathsf{SFrm} \to \mathsf{CjSLat}$ constructed in \cite[Construction 2.3.2]{GS23}.
        As in \Cref{prop:spatial_support}, one can show that the universal support is given by taking $Y:=\pt(d\catL)$ and one may verify, using \cite[Proposition 5.3.3]{GS23}, that the set of points of $Y$ identifies with $\on{Sp}^r(\catK)$.
        All in all, to get a more useful classification another notion of support is required. 
    \end{remark}

    \bibliography{bib}

@article{Bal05,
    author = {P. Balmer},
    title = {The spectrum of prime ideals in tensor triangulated categories},
    journal = {J. Reine Angew. Math.},
    year = 2005,
    pages = {149-168},
    volume = 588
}

@article{NVY22,
    author = {D. K. Nakano and K. B. Vashaw and M. T. Yakimov},
    title = {Noncommutative tensor triangular geometry},
    journal = {Amer. J. Math.},
    year = 2022,
    pages = {1681-1724},
    number = 6,
    volume = 144
}

@article{NVY24,
    author = {D. K. Nakano and K. B. Vashaw and M. T. Yakimov},
    title = {On the spectrum and support theory of a finite tensor category},
    journal = {Math. Ann.},
    year = 2024,
    pages = {205-254},
    volume = 390,
    number = 1
}

@article{BCR97,
    author = {D. J. Benson and J. F. Carlson and J. C. Rickard},
    title = {Thick subcategories of the stable module category},
    journal = {Fund. Math.},
    year = 1997,
    pages = {59-80},
    volume = 153
}

@book{EGNO15,
    author = {P. Etingof and S. Gelaki and D. Nikshych and V. Ostrik},
    title = {Tensor Categories (Mathematical Surveys and Monographs)},
    publisher = {American Mathematical Society },
    year = 2015
}

@article{NP23,
    author = {C. Negron and J. Pevtsova},
    title = {Hypersurface support and prime ideal spectra for stable categories},
    journal = {Ann. K-Theory},
    year = 2023,
    pages = {25-79},
    volume = 8,
    number = 1
}

@article{BO24,
    author = {P. Balmer and P. S. Ocal},
    title = {Universal support for triangulated categories },
    journal = {Comptes Rendus Math. Acad. Sci. Paris},
    year = 2024,
    pages = {635-637},
    volume = 362
}

@article{BPW24,
    author = {P. Bergh and J. Y. Plavnik and S. Witherspoon},
    title = {Support varieties without the tensor product property},
    journal = {Bull. London Math. Soc},
    year = 2024,
    pages = {2150-2161},
    volume = 56
}

@article{HV25,
    author = {H. Huang and K. B. Vashaw},
    title = {Group actions on monoidal triangulated categories and {B}almer spectra. },
    journal = {Doc. Math},
    year = 2025
}

@book{Ne01,
    author = {A. Neeman},
    title = {Triangulated Categories},
    publisher = {Princeton University Press},
    series = {Annals of Mathematics Studies},
    year = 2001
}

@article{BKS07,
    author = {A. B. Buan and H. Krause and Ø. Solberg},
    title = {Support varieties - an ideal approach},
    journal =  {Homology Homotopy App.},
    year = 2007,
    pages = {45-74},
    volume=9,   
    number = 1
}

@article{MR23,
    author = {V. M. Mallick and S. Ray},
    title = { Noncommutative tensor triangulated categories and coherent frames },
    journal = {Comptes Rendus. Mathématique},
    year = 2023,    
    volume = 361,  
    pages = {1415-1427}
}

@misc{NVY23,
    author = {D. K. Nakano and K. B. Vashaw and M. T. Yakimov},
    title = {A {C}hinese remainder theorem and {C}arlson's theorem for monoidal triangualted categories},
    year = 2023,
    url={https://arXiv.org/abs/2311.17883}, 
    eprint={2311.17883},
    archivePrefix={arXiv},
    howpublished	= {\href{https://arXiv.org/abs/2311.17883}{arXiv:2311.17883}},
    publisher     = {arXiv}
}

@article{BW14,
    author = {D. J. Benson and S. Witherspoon},
    title = {Examples of support varieties for {H}opf algebras with noncommutative tensor products},
    journal = {Arch. Math. (Basel)},
    year = 2014,
    pages = {512-520},
    number = 6, 
    volume = 102
}

@article{GS23,
    author = {S. Gratz and G. Stevenson},
    title = {Approximating triangualted categories by spaces},
    journal = {Adv. Math.},
    year = 2023,
    volume = 425,
}

@article{K23,
    author = {H. Krause},
    title = {Central support for triangulated categories},
    journal = {Int. Math. Res. Not.},
    year = 2023,
    pages = {19773-19800},
    volume = 2023
}

@article{BHS23,
    author = {T. Barthel and D. Heard and B. Sanders},
    title = {Stratification in tensor triangular geometry with applications to spectral {M}ackey functors},
    journal = {Cambridge J. Math.},
    year = 2023,
    number = 4,
    pages = {829-915},
    volume = 11
}

@article{FP07,
    author = {E. Friedlander and J. Pevtsova},
    title = {$\pi$-supports for modules for finite group schemes},
    journal = {Duke Math. J.},
    year = 2007,
    pages = { 317-368 },
    volume = 139,
    number = 2
}

@article{Kra24,
    author = {H. Krause},
    title = {An analogue of {S}tone duality via support},
    journal = {J. Pure Appl. Algebra},
    year = 2024,    
    pages = 107621,
    volume = 228,
    number = 6
}

@misc{Mil25,
    author = {S. K. Miller},
    title = {On functoriality and the tensor product property in noncommutative tensor-triangular geometry},
    year = 2025,
    url={https://arXiv.org/abs/2505.01899}, 
    eprint={2505.01899},
    archivePrefix={arXiv},
    howpublished	= {\href{https://arXiv.org/abs/2505.01899}{arXiv:2505.01899}},
    publisher     = {arXiv}
}

@book{Joh82,
    author = {P. T. Johnstone},
    title = {Stone Spaces},
    publisher = {Cambridge Univ. Press},
    year = 1982
}

@article{Hoc69,
    author = {M. Hochster},
    title = {Prime ideal structure in commutative rings},
    journal = {Trans. Amer. Math. Soc.},
    year = 1969,
    pages = {43-60},
    volume = 142
}

@article{KP17,
    author = {J. Kock and W. Pitsch},
    title = {Hochster duality in derived categories and point-free reconstruction of schemes},
    journal = {Trans. Amer. Math. Soc},
    year = 2017,
    volume = 369,
    number = 1,
    pages = {223-261}
}

@article{Tho97,
    author = {R. W. Thomason},
    title = {The classification of triangulated subcategories},
    journal = {Compos. Math.},
    year = 1997,
    pages = {1-27},
    volume = 105
}

@article{Del10,
    author = {I. Dell'Ambrogio},
    title = {Tensor triangular geometry and {$KK$}-theory},
    journal = {J. Homotopy and Relat. Str.},
    year = 2010,
    pages = {319-358},
    volume = 5,
    number = 1
}

@article {BF11,
    AUTHOR = {P. Balmer and G. Favi},
     TITLE = {Generalized tensor idempotents and the telescope conjecture},
   JOURNAL = {Proc. Lond. Math. Soc. (3)},
  FJOURNAL = {Proceedings of the London Mathematical Society. Third Series},
    VOLUME = {102},
      YEAR = {2011},
    NUMBER = {6},
     PAGES = {1161--1185},
      ISSN = {0024-6115,1460-244X},
   MRCLASS = {18E30 (14F05 55P60)},
  MRNUMBER = {2806103},
MRREVIEWER = {Fernando\ Muro},
       DOI = {10.1112/plms/pdq050},
       URL = {https://doi.org/10.1112/plms/pdq050},
}

@misc{NVY25,
    author = {D. K. Nakano and K. B. Vashaw and M. T. Yakimov},
    title = {The homological spectrum for monoidal triangulated categories},
    year = 2025,
    url={https://arXiv.org/abs/2506.19946}, 
    eprint={2506.19946},
    archivePrefix={arXiv},
    howpublished	= {\href{https://arXiv.org/abs/2506.19946}{arXiv:2506.19946}},
    publisher     = {arXiv}
}

@article{Bal20,
    author = {P. Balmer},
    title = {Nilpotence theorems via homological residue fields},
    journal = {Tunis. J. Math.},
    year = 2020,
    volume = 2,
    number = 2,
    pages = {359–378}
}

@misc{Bar25,
    author = {T. Barthel},
    title = {Cohen's theorem in tensor triangular geometry},
    year = 2025,
    url={https://arXiv.org/abs/2505.15786}, 
    eprint={2505.15786},
    archivePrefix={arXiv},
    howpublished	= {\href{https://arXiv.org/abs/2505.15786}{arXiv:2505.15786}},
    publisher     = {arXiv}

}

@misc{Zou23,
    author = {Zou, C.},
    title = {Support theories for non-{N}oetherian tensor-triangulated categories},
    year = 2023,
    url={https://arXiv.org/abs/2312.08596}, 
    eprint={2312.08596},
    archivePrefix={arXiv},
    howpublished	= {\href{https://arXiv.org/abs/2312.08596}{arXiv:2312.08596}},
    publisher     = {arXiv}
}

@book{DST19,
    author = {M. Dickmann and N. Schwarz and M. Tressl},
    title = {Spectral spaces},
    publisher = {Cambridge University Press},
    year = 2019,
    volume = 35
}

@book{HJR11,
     TITLE = {Triangulated categories},
    SERIES = {London Mathematical Society Lecture Note Series},
    VOLUME = {375},
    EDITOR = {Holm, T. and J{\o}rgensen, P. and Rouquier, R.},
 PUBLISHER = {Cambridge University Press, Cambridge},
      YEAR = {2010},
     PAGES = {viii+463},
      ISBN = {978-0-521-74431-7},
   MRCLASS = {18-06 (18E30)},
  MRNUMBER = {2723238},
       DOI = {10.1017/CBO9781139107075},
       URL = {https://doi.org/10.1017/CBO9781139107075},
}

@article{Row24,
    author = {J. Rowe},
    title = {Noncommutative tensor triangular geometry: classification via {N}oetherian spectra},
    journal = {Pacific. J. Math.},
    year = 2024,
    pages = {330-355},
    volume = 330,
    number = 2
}

@article{Sto39,
    author = {M. H. Stone},
    title = {Topological representations of distributive lattices and {B}rouwerian logics},
    journal = {Cas. Mat. Fys.},
    volume = 67,
    pages = {1–25},
    year = 1937,
    
}

@article{Coh50,
    author = {I. S. Cohen. },
    title = {Commutative rings with restricted minimum condition},
    journal = {Duke. Math. J.},
    year = 1950,
    volume = 17,
    pages= {27–42}
}

@article {Aok23,
    AUTHOR = {K. Aoki},
     TITLE = {Tensor triangular geometry of filtered objects and sheaves},
   JOURNAL = {Math. Z.},
  FJOURNAL = {Mathematische Zeitschrift},
    VOLUME = {303},
      YEAR = {2023},
    NUMBER = {3},
     PAGES = {Paper No. 62, 27},
      ISSN = {0025-5874,1432-1823},
   MRCLASS = {18G80 (18F20 18F70 18N60)},
  MRNUMBER = {4549105},
MRREVIEWER = {Maosong\ Xiang},
       DOI = {10.1007/s00209-023-03210-z},
       URL = {https://doi.org/10.1007/s00209-023-03210-z},
}

@article {BGLS26,
    AUTHOR = {R. Bennett-Tennenhaus and I. Goodbody and Letz,
              J. C. and Shah, A.},
     TITLE = {Tensor extriangulated categories},
   JOURNAL = {J. Algebra},
  FJOURNAL = {Journal of Algebra},
    VOLUME = {685},
      YEAR = {2026},
     PAGES = {361--405},
      ISSN = {0021-8693,1090-266X},
   MRCLASS = {18M05 (18A23 18E05 18G80 18G99)},
  MRNUMBER = {4946057},
       DOI = {10.1016/j.jalgebra.2025.07.041},
       URL = {https://doi.org/10.1016/j.jalgebra.2025.07.041},
}

@article {HS98,
    AUTHOR = {Hopkins, M. J. and Smith, J. H.},
     TITLE = {Nilpotence and stable homotopy theory. {II}},
   JOURNAL = {Ann. of Math.},
  FJOURNAL = {Annals of Mathematics. Second Series},
    VOLUME = {148},
      YEAR = {1998},
    NUMBER = {1},
     PAGES = {1--49},
      ISSN = {0003-486X,1939-8980},
   MRCLASS = {55P42 (55N20 55Q10)},
  MRNUMBER = {1652975},
MRREVIEWER = {David\ A.\ Blanc},
       DOI = {10.2307/120991},
       URL = {https://doi.org/10.2307/120991},
}

@article {DHS88,
    AUTHOR = {Devinatz, E. S. and Hopkins, M. J. and Smith, J.
              H.},
     TITLE = {Nilpotence and stable homotopy theory. {I}},
   JOURNAL = {Ann. of Math. (2)},
  FJOURNAL = {Annals of Mathematics. Second Series},
    VOLUME = {128},
      YEAR = {1988},
    NUMBER = {2},
     PAGES = {207--241},
      ISSN = {0003-486X,1939-8980},
   MRCLASS = {55Q10 (55P42 55Q52)},
  MRNUMBER = {960945},
MRREVIEWER = {Frederick\ Cohen},
       DOI = {10.2307/1971440},
       URL = {https://doi.org/10.2307/1971440},
}

@incollection {Hop87,
    AUTHOR = {Hopkins, M. J.},
     TITLE = {Global methods in homotopy theory},
 BOOKTITLE = {Homotopy theory ({D}urham, 1985)},
    SERIES = {London Math. Soc. Lecture Note Ser.},
    VOLUME = {117},
     PAGES = {73--96},
 PUBLISHER = {Cambridge Univ. Press, Cambridge},
      YEAR = {1987},
      ISBN = {0-521-33946-4},
   MRCLASS = {55Q45 (55P42 55P60 55Q10)},
  MRNUMBER = {932260},
MRREVIEWER = {Douglas\ C.\ Ravenel},
}

@MISC {Mag18,
    TITLE = {Epimorphisms on category of (finite) bounded distributive lattices = surjective?},
    AUTHOR = {Magidin, A.},
    HOWPUBLISHED = {Mathematics Stack Exchange},
    NOTE = {\url{https://math.stackexchange.com/q/2728130} (version: 2018-04-08)},
    EPRINT = {https://math.stackexchange.com/q/2728130},
    URL = {https://math.stackexchange.com/q/2728130}
}

@article{DP66,
    author = {Dowker, C.H. and Papert, D. },
    title = {Quotient Frames and Subspaces},
    journal = {Proc. Lon. Math. Soc.},
    year = 1966,
    volume = 13,    
    pages = {275-296}
}

@article {Nee96,
    AUTHOR = {Neeman, A.},
     TITLE = {The {G}rothendieck duality theorem via {B}ousfield's
              techniques and {B}rown representability},
   JOURNAL = {J. Amer. Math. Soc.},
  FJOURNAL = {Journal of the American Mathematical Society},
    VOLUME = {9},
      YEAR = {1996},
    NUMBER = {1},
     PAGES = {205--236},
      ISSN = {0894-0347,1088-6834},
   MRCLASS = {18E30 (14F05)},
  MRNUMBER = {1308405},
MRREVIEWER = {Luca\ Barbieri Viale},
       DOI = {10.1090/S0894-0347-96-00174-9},
       URL = {https://doi.org/10.1090/S0894-0347-96-00174-9},
}

@misc{BG25,
    author = {P. Balmer and M. Gallauer},
    title = {Patch-density in tensor-triangular geometry},
    year = 2025,
    url={https://arxiv.org/abs/2503.15274}, 
    eprint={2503.15274},
    archivePrefix={arXiv},
    howpublished	= {\href{https://arxiv.org/abs/2503.15274}{arXiv:2503.15274}},
    publisher     = {arXiv}
}
    \bibliographystyle{alpha}
    
\end{document}